\documentclass[11pt]{article}
\usepackage[a4paper,margin=28mm]{geometry}
\usepackage{amsmath,amssymb,amsthm,mathtools}
\usepackage{microtype}
\usepackage[hidelinks]{hyperref}
\usepackage{enumitem}
\usepackage{booktabs}

\newtheorem{theorem}{Theorem}[section]
\newtheorem{proposition}[theorem]{Proposition}
\newtheorem{corollary}[theorem]{Corollary}
\newtheorem{lemma}[theorem]{Lemma}
\newtheorem{remark}[theorem]{Remark}
\newtheorem{example}[theorem]{Example}

\newtheorem{conjecture}[theorem]{Conjecture}

\newcommand{\cO}{\mathcal O}
\newcommand{\PP}{\mathbb P}
\newcommand{\CC}{\mathbb C}
\newcommand{\Tan}{\operatorname{Tan}}
\newcommand{\Sec}{\operatorname{Sec}}
\newcommand{\Gr}{\operatorname{Gr}}
\newcommand{\tdeg}{\tau_{\mathrm{tan}}}

\title{Chern bounds and tangent geometry \\ of polarized Calabi--Yau threefolds}
\author{Atsushi Kanazawa}
\date{}

\begin{document}
\maketitle

\begin{abstract}
We study the numerical geography and tangent geometry of very amply polarized Calabi--Yau threefolds $(X,H)$ through the positivity of the first jet bundle $J^1(H)$. 
Writing $d=\int_X H^3$, $c=\int_Xc_2(X) H$, and $e=\int_X c_3(X)$, we exploit two different positivity properties of this single bundle.
Mixed intersections on $\PP(J^1(H)^*)$ give $e\ge-5d-c-c^2/(4d)$, while a volume estimate for a perturbed tautological class gives $e\ge40d\left[(1-\frac{c}{10d})^{3/2}-1\right]$; 
in particular $e+6c\ge0$, improving Sun's inequality $e+10c\ge0$.
As consequences, we obtain the uniform Hodge bounds $-4d-80\le h^{1,1}(X)-h^{2,1}(X)\le173d/66$, 
the lower bound $\deg X^\vee\ge78$ for the dual hypersurface, and, in the critical case $X\subset\PP^6$, the upper bound $d\le34$. 
We also prove that, for every $m\ge2$, the tangent-incidence morphism associated with $|mH|$ is the normalization of the tangent variety, 
conjecture tangent birationality for complete embeddings $X\subset\PP^N$ with $N\ge7$, and verify it for several families.
\end{abstract}

\tableofcontents


\section{Introduction}

We study the numerical geography and tangent geometry of polarized Calabi--Yau threefolds through the positivity of the first jet bundle and projective duality.
Continuing \cite{KW}, our aim is to derive sharper constraints among the degree, Chern numbers, and Hodge numbers, and to interpret them geometrically. 

Throughout, we work over $\CC$.
A Calabi--Yau threefold is a smooth projective threefold $X$ with $K_X\sim0$ and $H^1(X,\cO_X)=0$.
Let $H$ be a very ample divisor\footnote{If $L$ is an ample divisor, Oguiso--Peternell \cite{OP} show that $10L$ is very ample.
The results below therefore apply with $H=10L$, following the same reduction as in \cite{KW}.} whose complete linear system embeds $X$ in $\PP^N$, 
and set $d=\int_X H^3$, $c=\int_X c_2(X) H$, and $e=\int_X c_3(X)$.
Since $e=2(h^{1,1}(X)-h^{2,1}(X))$, bounds on $e$ translate directly into constraints on the Hodge numbers. 

\vspace{1mm}

\noindent
{\bf The first jet bundle.}
Our single source of positivity is the rank $4$ bundle $J^1(H)$, which is globally generated because $|H|$ separates first jets.
It governs both tangent geometry and projective duality. 
On the one hand, its Segre class encodes the tangent geometry through the classical factorization of the tangential Chern gap
\begin{equation}\label{eq:intro-factor}
 \omega(X,H):=20d-6c-e=\tdeg(X)\deg\Tan(X),\qquad N\ge6,
\end{equation}
where $\Tan(X)$ is the tangent variety and $\tdeg(X)$ is the degree of the tangent-incidence morphism $\PP(J^1(H)^*)\to\Tan(X)\subset\PP^N$. 
On the other hand, its top Chern class computes the degree of the projective dual, $\deg X^\vee=4d+2c-e$. 
The resulting inequality
\begin{equation}\label{eq:intro-classical}
 e\le20d-6c
\end{equation}
is classical; see \cite{LS, CDG, LZ}. 
What is new here is that the projective bundle $\mathcal T=\PP(J^1(H)^*)$ carries two further, quite different positivity inputs, which produce two complementary lower bounds for $e$. 

\vspace{1mm}

\noindent
{\bf Mixed intersections.}
The tautological class $\xi$ and the pullback $\eta$ of $H$ are nef on the sixfold $\mathcal T$, and the three relevant intersection numbers are given explicitly by our basic invariants:
\[
 \int_{\mathcal T}\xi^4\eta^2=4d,\qquad
 \int_{\mathcal T}\xi^5\eta=10d-c,\qquad
 \int_{\mathcal T}\xi^6=\omega(X,H)
\]
The Khovanskii--Teissier inequality \cite[Section~1.6]{Laz} therefore gives (Theorem~\ref{thm:logconcave})
\begin{equation}\label{eq:intro-nonlinear}
 e\ge-5d-c-\frac{c^2}{4d}. 
\end{equation}
This improves the Schur-class bound $e\ge-20d-2c$, except at the boundary $c=10d$, and equality is attained by the quintic threefold. 

\vspace{1mm}

\noindent
{\bf Volume estimate.}
The positivity of $J^1(H)$ also gives $0\le c\le10d$, and a volume estimate for the perturbed tautological class $\xi-t\eta$ yields our second bound (Theorem~\ref{thm:jet-volume})
\begin{equation}\label{eq:intro-volume}
 e \ge 40d \Big[\left(1-\frac{c}{10d}\right)^{3/2}-1\Big]. 
\end{equation}
In particular, $e+6c\ge0$, improving Sun's inequality $e+10c\ge0$ \cite[Corollary~6.3]{Sun}.
The proof combines the semistability of the symmetric powers $\operatorname{Sym}^mJ^1(H)$, Langer's section estimate \cite[Theorem~3.3]{Langer}, and the algebraic Morse inequality \cite[Proposition~3.3]{BFJ}.
It works in characteristic zero throughout, and is thus independent of the Frobenius-semipositivity argument of \cite{Sun}. 

By \eqref{eq:intro-factor}, the linear consequence $e+6c\ge0$ admits a purely geometric reading:
\begin{equation}\label{eq:intro-tangent-degree}
 \tdeg(X)\deg\Tan(X)\le20\deg X,\qquad N\ge6. 
\end{equation}
Equality in $e+6c=0$ holds precisely when $c=0$, equivalently when $X$ admits a finite \'{e}tale cover by an abelian threefold, that is, when $X$ is of type A classified in \cite{OS, HK}. 
Equality in \eqref{eq:intro-volume} is attained both by threefolds of type A, where $s=1$, and by the quintic threefold, where $s=0$; here $s=\sqrt{1-c/(10d)}$. 
The two bounds are genuinely complementary: \eqref{eq:intro-volume} is stronger for $c/d<\beta\simeq1.096953$ and \eqref{eq:intro-nonlinear} is stronger for $\beta<c/d<10$ (Remark~\ref{rem:volume-vs-mixed}), and they detect different boundary geometries.
We also observe that the boundary value $c=10d$ characterizes the quintic threefold among all very amply polarized Calabi--Yau threefolds (Remark~\ref{rem:dual-logconcave}).
It remains open whether the coefficient $6$ is optimal. 

\vspace{1mm}

\noindent
{\bf Hodge and degree bounds.}
Combining \eqref{eq:intro-nonlinear} with hyperplane-section and tangent-variety estimates gives (Theorem~\ref{thm:Hodge})
\[
 -4d-80\le h^{1,1}(X)-h^{2,1}(X)\le\frac{173}{66}d
\]
substantially improving the previously known bounds $-36d-80\le h^{1,1}(X)-h^{2,1}(X)\le6d+40$ obtained in \cite{KW}. 
The lower bound is sharp only for the quintic threefold; for $d>5$, we obtain the stronger lower bound $ \left\lceil-4d-40-\frac{200}{d}\right\rceil \le h^{1,1}(X)-h^{2,1}(X)$. 
The same circle of ideas gives the polarization-independent lower bound for the projective dual (Theorem~\ref{thm:dual-hypersurface}, Theorem~\ref{thm:dual-lower}): 
\[
\deg X^\vee=4d+2c-e \ge78.
\]

\vspace{1mm}

\noindent
{\bf Three ambient regimes.}
The ambient dimension of $X \subset \PP^N$ governs both the numerical constraints and the geometry of the tangent variety. 
We distinguish the low-codimension range $N=4,5$, the critical case $N=6$, and the stable range $N\ge7$. 
For $N=4,5$, the possibilities are the quintic threefold $X_5\subset\PP^4$ and the complete intersections $X_{2,4},X_{3,3}\subset\PP^5$; equality holds in \eqref{eq:intro-classical}. 
In the critical case $N=6$, Tonoli's identities $c=84-2d$ and $e=49d-588-d^2$ in \cite{Tonoli} make the tangent degree a quadratic polynomial in the degree (Theorem~\ref{thm:P6exact}):
\[
 \tdeg(X)=d^2-17d+84. 
\]
Together with the results of Kapustka--Kapustka \cite{KK}, this implies $\tdeg(X)\ge24$, with equality precisely for $X_{2,2,3}\subset\PP^6$. 
Here the new inequality $e+6c\ge0$ becomes $-d^2+37d-84\ge0$ and yields (Theorem~\ref{thm:P6window})
\[
 12\le d\le34, 
\]
improving the previously known upper bound $d\le41$ \cite[Corollary~2.2]{KK}. 
In the stable range $N\ge7$, results on secant-defective threefolds and tangent degrees, together with the nonrationality of $X$, yield (Theorem~\ref{thm:stable})
\begin{equation}\label{eq:intro-stable}
 e\le20d-6c-2(N-5)-2. 
\end{equation}

\vspace{1mm}

\noindent
{\bf Tangent birationality.}
We next turn from numerical bounds to the geometry of the tangent-incidence morphism. 
For every $m\ge2$, we prove that the tangent-incidence morphism associated with the complete embedding defined by $|mH|$ is the normalization of the tangent variety. 
In particular, $\tdeg(X,mH)=1$.  
For $m\ge3$, this morphism is bijective and is an isomorphism away from the embedded copy of $X$, which is exactly the reduced singular locus of the tangent variety.
The argument applies more generally to every smooth nonlinear projective variety (Theorem~\ref{thm:veronese}), and improves the threshold $m\ge4$ previously available from higher syzygies. 

For $m=1$, we conjecture that the tangent-incidence morphism has degree $1$ for every complete embedding by $|H|$ with $N\ge7$. 
We verify this conjecture for several stable-range families, including intersections of four quadrics, 
intersections of two Pl\"ucker Grassmannians, and suitable polarizations of weighted complete-intersection Calabi--Yau threefolds.


\paragraph{Acknowledgment}
The author thanks Hao Sun for drawing his attention to \cite{Sun}, which led to improvements in the present paper.  
This work is supported by JSPS Grants-in-Aid Kiban (C) 22K03296 and Kiban (A) 23H00083.


\section{The first jet bundle and tangent geometry} 
\label{sec:background}

The first jet bundle $J^1(H)$ of a very ample divisor is the source of positivity used throughout this paper.
It is globally generated precisely because $|H|$ separates first jets, and its Segre classes encode the tangent geometry. 
This section fixes the conventions, derives the resulting tangent-incidence formula, and introduces the tangential Chern gap, the numerical invariant that carries this geometry.


\subsection{Notation and conventions}\label{subsec:notation}

Let $X\subset\PP^N=\PP(V)$ be a smooth projective threefold, where $\PP(V)$ denotes the space of lines in the vector space $V$.
Here and below, nondegenerate means not contained in a hyperplane, and a statement about a general point means that it holds on a dense Zariski-open subset of the variety in question.
We write $T_xX\subset\PP(V)$ for the embedded projective tangent space at a closed point $x\in X$.

For a vector bundle $E$ on $X$, we use the convention that $\PP(E)$ parametrizes lines in $E$,
with $\cO_{\PP(E)}(-1)\subset p^*E$ the tautological subbundle and $p:\PP(E)\rightarrow X$ the projection.
With this convention, the Segre classes are given by
\begin{equation}\label{eq:segre-convention}
 s_i(E)=p_*\Bigl(c_1\bigl(\cO_{\PP(E)}(1)\bigr)^{\operatorname{rk}E-1+i}\Bigr)
 \qquad(i\ge0),
\end{equation}
and satisfy $s(E)=c(E)^{-1}$ and $s_i(E^*)=(-1)^is_i(E)$. 

In intersection formulas, we use the same symbol for a Cartier divisor $D$ and its class $c_1(\cO_X(D))$, and we write $c_i(X)=c_i(T_X)$.
For the fixed polarized threefold $(X,H)$, numerical expressions use
\[
 d=\int_X H^3,\qquad
 c=\int_X c_2(X)H,\qquad
 e=\int_X c_3(X).
\]


\subsection{The first jet bundle and tangent incidence}\label{subsec:jet-incidence}

We assume from now on that $X$ is nondegenerate, and let $H$ be a hyperplane divisor, so that $\cO_X(H)\simeq\cO_{\PP^N}(1)|_X$, and write $h=c_1(\cO_{\PP^N}(1))$.
The first jet bundle $J^1(H)$ has rank $4$: its fibre at a closed point $x\in X$ is the vector space $\cO_X(H)_x/\mathfrak m_x^2\cO_X(H)_x$, where $\mathfrak m_x$ is the maximal ideal
of $\cO_{X,x}$.
It fits into the jet sequence
\[
 0\longrightarrow\Omega_X^1(H)
 \longrightarrow J^1(H)
 \longrightarrow\cO_X(H)\longrightarrow0.
\]
A linear system separates first jets if its evaluation map to this fibre is surjective at every $x$.
Since $X\subset\PP(V)$ is an embedding, the defining linear system $V^*\subseteq H^0(X,\cO_X(H))$ separates first jets, so its evaluation $V^*\otimes\cO_X\to J^1(H)$ is surjective.\footnote{
Although $H$ is very ample, we do not assume the defining linear system to be complete in Sections~\ref{sec:background}--\ref{sec:dual}.}
Combining the conormal sequence with the Euler sequence identifies its kernel with $N_{X/\PP^N}^*(H)$.
Dualizing gives
\begin{equation}\label{eq:jet-dual-sequence}
 0\longrightarrow J^1(H)^*
 \longrightarrow V\otimes\cO_X
 \longrightarrow N_{X/\PP^N}(-H)\longrightarrow0.
\end{equation}
It follows that
\begin{equation}\label{eq:normal-segre}
 c\bigl(N_{X/\PP^N}(-H)\bigr)=c(J^1(H)^*)^{-1}=s(J^1(H)^*).
\end{equation}
See, for example, \cite{BS,DRS,LZ}.

Set $\mathcal T:=\PP(J^1(H)^*)$ and let $p:\mathcal T\to X$ be the projection.
By \eqref{eq:jet-dual-sequence}, the projectivization of the fibre of $J^1(H)^*$ at $x$ is the embedded tangent space $T_xX$.
Thus $\mathcal T$ parametrizes pairs $(x,z)$ with $z\in T_xX$, and the tangent-incidence morphism is $q:\mathcal T\to\PP^N$, $(x,z)\mapsto z$.
The inclusion in \eqref{eq:jet-dual-sequence} gives $\cO_{\mathcal T}(-1)\subset q^*\cO_{\PP^N}(-1)$, whence
\[
 \xi:=c_1\bigl(\cO_{\mathcal T}(1)\bigr)=q^*h.
\]
Since \(\operatorname{rk}J^1(H)=4\), the projective bundle $\mathcal T$ is a smooth sixfold,
although its image, the tangent variety $\Tan(X):=q(\mathcal T)$, may have smaller dimension.
As $\mathcal T$ is proper, $\Tan(X)$ is closed; by contrast, we write $\Sec(X)$ for the Zariski closure of the union of secant lines to $X$.
A threefold $X$ is called secant defective if $\dim\Sec(X)<\min\{7,N\}$, the expected dimension of its secant variety.


\subsection{Tangent degree}\label{subsec:tangent-degree-background}

By \eqref{eq:normal-segre} and the definition \eqref{eq:segre-convention} of the Segre classes, applied to the rank $4$ bundle $J^1(H)^*$,
\begin{equation}\label{eq:classical-tangent}
 \int_Xc_3\bigl(N_{X/\PP^N}(-H)\bigr)
 =\int_Xs_3(J^1(H)^*)
 =\int_{\mathcal T}\xi^6.
\end{equation}
When $q:\mathcal T\to\Tan(X)$ is generically finite, we define the tangent degree by
\[
 \tdeg(X):=\deg\bigl(q:\mathcal T\to\Tan(X)\bigr).
\]
It counts the points $x\in X$ such that $z\in T_xX$ for a general $z\in\Tan(X)$.
Thus
\begin{equation}\label{eq:tandegree}
 \int_{\mathcal T}\xi^6
 =
 \begin{cases}
 \tdeg(X)\deg\Tan(X),&\dim\Tan(X)=6,\\
 0,&\dim\Tan(X)<6.
 \end{cases}
\end{equation}
For the complete embedding defined by a very ample divisor $H$, we also write $\Tan(X,H)$ and $\tdeg(X,H)$ to indicate the polarization.
We say that the embedding has tangent birationality when its tangent-incidence morphism is birational, equivalently when $ \tdeg(X)=1$.
The analogous factorization in arbitrary dimension is proved in \cite[Lemma~3.2]{CDG}.

\begin{remark}\label{rem:singular-tangent-degree}
For an irreducible, possibly singular variety, we use the closure of the tangent incidence over its smooth locus; the tangent degree is defined when the projection to the tangent variety is generically finite.
\end{remark}

\begin{lemma}\label{lem:lowcodim}
For a smooth nondegenerate threefold $X\subset\PP^N$, the following are equivalent:
\[
 \int_Xc_3\bigl(N_{X/\PP^N}(-H)\bigr)=0,\qquad
 \dim\Tan(X)<6,\qquad
 N\le5.
\]
\end{lemma}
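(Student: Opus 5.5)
We prove the equivalences by chaining (a)$\Leftrightarrow$(b) via \eqref{eq:tandegree} with a dimension/positivity argument, and (b)$\Leftrightarrow$(c) via the Terracini lemma together with Zak's theorem on tangencies.

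\textbf{(a)$\Leftrightarrow$(b).} By \eqref{eq:classical-tangent}, $\int_Xc_3(N_{X/\PP^N}(-H))=\int_{\mathcal T}\xi^6$, where $\xi=q^*h$ is nef. If $\dim\Tan(X)<6$, then $\xi^6=q^*(h^6)$ vanishes, because $h^6$ restricts to zero on a variety of dimension $<6$; this is the second case of \eqref{eq:tandegree}. Conversely, if $\dim\Tan(X)=6$, the first case of \eqref{eq:tandegree} gives $\int\xi^6=\tdeg(X)\deg\Tan(X)>0$. Hence (a) and (b) are equivalent.

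\textbf{(c)$\Rightarrow$(b).} This is immediate: $\Tan(X)\subset\PP^N$ forces $\dim\Tan(X)\le N\le5$.

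\textbf{(b)$\Rightarrow$(c).} This is the substantive step. Assume $\dim\Tan(X)\le5$ and, for contradiction, $N\ge6$. I would use two classical inputs.

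\begin{enumerate}[label=(\roman*)]
\item Fulton--Hansen/Zak: for a smooth nondegenerate $X$ that is not linear, either $\dim\Tan(X)=2\dim X$ and $\dim\Sec(X)=2\dim X+1$, or $\Tan(X)=\Sec(X)$. Zak's theorem on tangencies also shows that $\dim\Tan(X)=6$ whenever $\Sec(X)\ne\Tan(X)$. So assume we are in the second case, $\Tan(X)=\Sec(X)$.
\item Terracini's lemma: $\dim\Sec(X)\ge\dim X+1=4$. Moreover, by Fulton--Hansen, $\Sec(X)=\Tan(X)$ implies that either $\Sec(X)=\PP^N$ or we are in a defective situation.
\end{enumerate}

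Concretely, $\dim\Sec(X)\le5<\min\{7,N\}$, so $X$ is secant defective. Zak's linear normality theorem then gives $N\le\frac32\dim X+2=6.5$, hence $N=6$. In that case $\dim\Sec(X)=5$, and $X\subset\PP^6$ would be a Severi-type defective threefold.

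To rule this out, I would invoke the Fulton--Hansen connectedness theorem: if $\dim\Sec(X)<2\dim X+1$, then $\Sec(X)=\Tan(X)$. Zak's theorem $\dim\Sec(X)\ge\frac32\dim X+1=5.5$ for $\Sec(X)\neq\PP^N$ then yields $\dim\Sec(X)\ge6$, which forces $\Sec(X)=\PP^6$. Since $\Tan(X)=\Sec(X)$, this gives $\dim\Tan(X)=6$, a contradiction.

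The main obstacle is getting the exact form of Zak's inequality right. The statement I intend to cite is that if $\Sec(X)\ne\PP^N$ then $\dim\Sec(X)\ge\frac32 n+1$; for $n=3$ this gives $\dim\Sec(X)\ge6$, contradicting $\dim\Sec(X)=\dim\Tan(X)\le5$. I would therefore carefully cite Zak's \emph{Tangents and secants of algebraic varieties} for this bound, or alternatively cite the classification of threefolds with degenerate tangent varieties, which shows such threefolds lie in $\PP^5$.
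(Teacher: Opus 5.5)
Your argument is correct and essentially matches the paper's. In both, (a)$\Leftrightarrow$(b) comes from \eqref{eq:tandegree}. For (b)$\Rightarrow$(c), Fulton--Hansen gives $\Tan(X)=\Sec(X)$, so $\dim\Sec(X)\le5<6\le N$, and Zak's bound $\dim\Sec(X)\ge\frac32\dim X+1$ for $\Sec(X)\ne\PP^N$ yields the contradiction, exactly as in your final paragraph.

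You should delete the detour through ``Zak's linear normality theorem gives $N\le\frac32\dim X+2$, hence $N=6$.'' The inequality is reversed: Zak gives $N\ge\frac32 n+2$ when $\Sec(X)\ne\PP^N$. The detour is also unnecessary, because your final argument already covers every $N\ge6$.
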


\begin{proof}
The equivalence of the first two conditions follows from \eqref{eq:tandegree}.
The implication $N\le5\Rightarrow\dim\Tan(X)<6$ is immediate.
Conversely, if $N\ge6$ and $\dim\Tan(X)<6$, Fulton--Hansen gives $\Tan(X)=\Sec(X)$, so that $\dim\Sec(X)<6\le N$.
Zak's bound on the secant defect then forces $\dim\Sec(X)\ge\frac32\dim X+1=\frac{11}2$, hence $\dim\Sec(X)\ge6$, a contradiction; see \cite{FH} and \cite[Chapter~II]{Zak}.
\end{proof}


\subsection{\texorpdfstring{$K$}{K}-trivial threefolds and the tangential Chern gap}\label{subsec:tangential-gap}

Now assume that $K_X\sim0$ and retain the given embedding.
Since $c_1(X)=0$, the jet sequence gives
\begin{align}\label{eq:jetchern}
 c_1(J^1(H))&=4H,\notag\\
 c_2(J^1(H))&=c_2(X)+6H^2,\\
 c_3(J^1(H))&=-c_3(X)+2c_2(X)H +4H^3.\notag
\end{align}
Thus
\begin{equation}\label{eq:CYsegre}
 s_3(J^1(H)^*)=20H^3-6c_2(X)H -c_3(X),
\end{equation}
and hence, by Lemma~\ref{lem:lowcodim},
\begin{equation}\label{eq:CYfactor}
 20d-6c-e
 =\int_{\mathcal T}\xi^6
 =
 \begin{cases}
 \tdeg(X)\deg\Tan(X),&N\ge6,\\
 0,&N\le5.
 \end{cases}
\end{equation}
We call 
\[
\omega(X,H):=20d-6c-e 
\]
the tangential Chern gap, and write $\omega$ when the polarization is clear.
Since $\xi=q^*h$ is nef, $\omega(X,H)\ge0$, and hence $e\le20d-6c$.
This inequality is classical; see Livorni--Sommese \cite[Theorem~(0.7.3)]{LS} and Li--Zheng \cite[Corollary~2.4]{LZ}.

\begin{example}\label{ex:ci-tangent}
Here and below, $X_{d_1,\ldots,d_r}$ denotes a complete intersection of hypersurfaces of degrees $d_1,\ldots,d_r$ with $d_i\ge2$.  
For $X=X_{d_1,\ldots,d_r}\subset\PP^{r+3}$, put $a_i=d_i-1$ and $d=\prod_i d_i$.
The splitting $N_{X/\PP^{r+3}}(-H)\simeq\bigoplus_i\cO_X(a_iH)$ gives
\[
 \int_Xs_3(J^1(H)^*)=d\sum_{i<j<k}a_i a_j a_k.
\]
For a Calabi--Yau complete intersection, \eqref{eq:CYfactor} identifies this number with $\omega(X,H)$.
It vanishes for $r\le2$, equals $24$ for $X_{2,2,3}$, and equals $64$ for $X_{2,2,2,2}$.
Thus $\tdeg(X_{2,2,3})=24$, since its tangent variety is $\PP^6$; for $X_{2,2,2,2}$, the calculation gives the product $\tdeg(X)\deg\Tan(X)=64$.
We compare these values with the dual degrees in Example~\ref{ex:ci-dual}.
\end{example}


\section{Tangent degree in the critical case and the stable range}
\label{sec:tangentdegree}

By Lemma~\ref{lem:lowcodim}, the tangent-incidence morphism is generically finite precisely when $N\ge6$, so the tangential Chern gap records genuine tangent geometry only outside the low-codimension range.
The two remaining regimes behave quite differently.
In the critical case $N=6$, the tangent variety fills the ambient space; 
in the stable range $N\ge7$, we will show that it is a proper subvariety of the secant variety, and lower bounds for its degree refine the classical inequality $e\le20d-6c$.
The tangent degree reflects this contrast: it is at least $24$ in the critical case, whereas it is conjecturally $1$ in the stable range.


\subsection{The critical case $N=6$}\label{subsec:critical-range}

The low-codimension cases are classified: a Calabi--Yau threefold in $\PP^4$ is a quintic threefold $X_5$, whereas in $\PP^5$ one obtains the complete intersections $X_{2,4}$ and $X_{3,3}$.
The case $N=6$, which we call the critical case, is therefore the first case beyond this classification.
Although the general classification of nondegenerate Calabi--Yau threefolds in $\PP^6$ remains open, their Chern numbers and tangent degree are determined by the projective degree.  

\begin{theorem}\label{thm:P6exact}
Let $X\subset\PP^6$ be a nondegenerate Calabi--Yau threefold with hyperplane divisor $H$. 
Then the Chern numbers and tangent degree are determined by $d$:
\begin{align}
c&=84-2d,\label{eq:P6c2}\\
 e&=49d-588-d^2,\label{eq:P6c3}\\
 \tdeg(X)&=d^2-17d+84
          =(d-8)(d-9)+12.\label{eq:P6tau}
\end{align}
Moreover,
\[
 d\ge12,\qquad \Tan(X)=\PP^6,\qquad \tdeg(X)\ge24,
\]
and equality $\tdeg(X)=24$ holds if and only if $X$ is a complete intersection $X_{2,2,3} \subset\PP^6$.
\end{theorem}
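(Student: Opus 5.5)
The proof runs in three stages: first the Chern numbers via Tonoli's identities, then the tangent degree from the tangential Chern gap, and finally the lower bound on $d$ and the equality case.

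\textbf{Chern numbers.} Following the introduction, I would take \eqref{eq:P6c2} and \eqref{eq:P6c3} from Tonoli \cite{Tonoli}. If a self-contained derivation is wanted, the mechanism is the double point formula for a threefold in $\PP^6$. Since $X$ is smooth and embedded, the double point class vanishes:
\[
 d^2=\int_X c_3\bigl(N_{X/\PP^6}\bigr).
\]
The class $c(N_{X/\PP^6})$ is expanded as $(1+h)^7c(X)^{-1}$ with $c_1(X)=0$. This gives one relation between $d$, $c$ and $e$. A second relation comes from Riemann--Roch together with the numerics of the minimal free resolution of $\cO_X$; this is how Tonoli obtains $c=84-2d$. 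Substituting $c=84-2d$ into the double point relation then yields $e=49d-588-d^2$.

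\textbf{Tangent degree.} Next I would compute the tangential Chern gap:
\[
\omega(X,H)=20d-6(84-2d)-(49d-588-d^2)=d^2-17d+84=(d-8)(d-9)+12 .
\]
By Lemma~\ref{lem:lowcodim} with $N=6$, the variety $\Tan(X)$ has dimension $6$. Hence $\Tan(X)=\PP^6$, so $\deg\Tan(X)=1$, and \eqref{eq:CYfactor} gives $\tdeg(X)=\omega$, which is \eqref{eq:P6tau}.

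\textbf{The bound $d\ge 12$ and the equality case.} For $d\ge12$ the expression $(d-8)(d-9)+12$ is increasing in $d$. So $\tdeg(X)\ge 144-204+84=24$, with equality exactly when $d=12$. The step I expect to be the real obstacle is the lower bound $d\ge12$ and the identification of the case $d=12$; these are not formal consequences of the numerics.

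To establish $d\ge12$, I would invoke Kapustka--Kapustka \cite{KK}. They show that nondegenerate Calabi--Yau threefolds in $\PP^6$ have degree at least $12$, using classical degree bounds for nondegenerate threefolds of codimension $3$ with trivial canonical class, namely Castelnuovo-type and liaison arguments. A useful consistency check is $c\ge0$, which gives $d\le42$, so small $d$ must be excluded geometrically.

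For the equality case, $d=12$ gives $h^0(\cO_X(2))$ via Riemann--Roch. Using $\chi(\cO_X(2H))=\tfrac{8d}{6}+\tfrac{2c}{12}=16+10=26$ and $h^0(\cO_{\PP^6}(2))=28$, the threefold $X$ lies on at least two independent quadrics. I would then argue, or quote \cite{KK}, that in degree $12$ these quadrics together with a cubic cut out $X$, so that $X=X_{2,2,3}$. Conversely, Example~\ref{ex:ci-tangent} gives $\tdeg(X_{2,2,3})=24$.
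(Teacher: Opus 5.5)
Your outline follows the paper's proof. It uses the self-intersection formula $d^2=35d-7c-e$, then Lemma~\ref{lem:lowcodim} to force $\Tan(X)=\PP^6$ and hence $\tdeg(X)=\omega(X,H)$. It relies on \cite{KK} both for $d\ge12$ and for identifying the degree-$12$ case with $X_{2,2,3}$. Your count of quadrics through $X$ is only heuristic; as in the paper, the characterization of the degree-$12$ case rests on the citation.

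The one step you misdescribe is where $c=84-2d$ comes from. The numerics of a free resolution do not by themselves tie $d$ to $c$. They give the Hilbert function of the homogeneous coordinate ring, and its value $7$ in degree $1$ equals $\chi(\cO_X(H))$ only if $X$ is linearly normal.

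What is actually needed is $h^0(X,\cO_X(H))=7$. Combined with Riemann--Roch and Kodaira vanishing, this gives $7=d/6+c/12$. Nondegeneracy alone yields only $h^0(X,\cO_X(H))\ge7$, hence only $c\ge84-2d$, since a priori $X$ could be an isomorphic linear projection of a threefold in a larger projective space.

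The paper closes this gap with linear normality from \cite[Proposition~2.1]{KK}. Equivalently, a non-linearly-normal $X$ would be an isomorphic projection of a smooth nondegenerate threefold in $\PP^7$ whose secant variety is not all of $\PP^7$. That threefold would be secant defective, which is excluded for Calabi--Yau threefolds exactly as in the proof of Theorem~\ref{thm:stable}.

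If you simply cite Tonoli for both identities, you are implicitly using this fact, so state it. Once it is in place, your derivation of $c$ and $e$ is the paper's.
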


\begin{proof}
By \cite[Proposition~2.1]{KK}, every Calabi--Yau threefold in $\PP^6$ is linearly normal.
Since $X$ is nondegenerate, the restriction map $H^0(\PP^6,\cO_{\PP^6}(1))\to H^0(X,\cO_X(H))$ is therefore an isomorphism, and $h^0(X,\cO_X(H))=7$.
Riemann--Roch and Kodaira vanishing give
\[
 7=\chi(X,\cO_X(H))=\frac d6+\frac{c}{12},
\]
which proves \eqref{eq:P6c2}.

The tangent sequence gives $c(N_{X/\PP^6})=(1+H)^7/c(T_X)$.
Taking the degree-$3$ part and using $c_1(X)=0$ and the self-intersection formula, we obtain
\[
 d^2=\int_{\PP^6}[X]^2
 =\int_Xc_3(N_{X/\PP^6})=35d-7c-e.
\]
Substituting \eqref{eq:P6c2} yields \eqref{eq:P6c3}.

Finally, Lemma~\ref{lem:lowcodim} gives $\dim\Tan(X)=6$; hence $\Tan(X)=\PP^6$ and $\deg\Tan(X)=1$.
Therefore \eqref{eq:CYfactor} and \eqref{eq:P6c2}--\eqref{eq:P6c3} give
\[
 \tdeg(X)=20d-6c-e=d^2-17d+84,
\]
which is \eqref{eq:P6tau}.
Kapustka--Kapustka prove that $d\ge12$, and that the degree-$12$ case is precisely the complete intersection $X_{2,2,3}$; see \cite[Corollary~5.1]{KK}.
Since $d^2-17d+84$ is strictly increasing for $d\ge12$, we obtain $\tdeg(X)\ge24$, with equality exactly for $X_{2,2,3}$.
\end{proof}

\begin{remark}\label{rem:P6class}
The identities for $c$ and $e$ were already obtained by Tonoli \cite{Tonoli}. In particular,
\[
 h^{1,1}(X)-h^{2,1}(X)=\frac{49d-588-d^2}{2}.
\]
Kapustka--Kapustka \cite{KK} classify Calabi--Yau threefolds in $\PP^6$ of degree at most $14$, while Tonoli \cite{Tonoli} constructs smooth examples in every degree from $12$ to $17$.
The general classification remains open. 
For degrees $12$ through $17$, Theorem~\ref{thm:P6exact} gives
\[
\begin{array}{c|rrrrrr}
d&12&13&14&15&16&17\\ \hline
\tdeg(X)&24&32&42&54&68&84
\end{array}
\]
\end{remark}


\subsection{The stable range $N\ge7$}\label{subsec:stable-range}

We now turn to $N\ge7$, where the tangent variety gives a strict refinement of the classical Chern inequality.

\begin{theorem}\label{thm:stable}
Let $X\subset\PP^N$ be a smooth nondegenerate projective threefold with $K_X\sim0$ and $N\ge7$, and let $H$ be its hyperplane divisor.
Then
\[
 \dim\Sec(X)=7,
 \qquad
 \Tan(X)\subsetneq\Sec(X),
\]
and
\begin{equation}\label{eq:tan-lower}
 \deg\Tan(X)\ge2(N-5).
\end{equation}
Consequently,
\begin{equation}\label{eq:stable-omega}
 \omega(X,H)\ge
 \max\{2(N-5)\tdeg(X),\,2(N-5)+2\},
\end{equation}
and therefore
\begin{equation}\label{eq:stable-chern}
 e\le20d-6c-2(N-5)-2.
\end{equation}
\end{theorem}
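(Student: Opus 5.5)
The plan is to first pin down the secant variety, then place $\Tan(X)$ strictly inside it, bound $\deg\Tan(X)$ below by a nondegeneracy argument, and finally feed everything into \eqref{eq:CYfactor}.

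\textbf{Step 1: the secant variety.} We first show $\dim\Sec(X)=7$. Suppose instead that $X$ were secant defective, i.e.\ $\dim\Sec(X)\le6<7\le N$. By Lemma~\ref{lem:lowcodim} we have $\dim\Tan(X)=6$ when $N\ge6$, and Fulton--Hansen \cite{FH} then gives $\Tan(X)=\Sec(X)$. So $X$ would be a smooth threefold with six-dimensional secant variety. The classification of secant-defective smooth threefolds (Scorza, Fujita) lists only rational or ruled varieties: projections of the Veronese $v_2(\PP^3)$, scrolls over curves or surfaces, quadric fibrations, hyperplane sections of $\PP^2\times\PP^2$, and $\PP^1\times\PP^2$-type cases. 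None of these is $K$-trivial, since each is uniruled while $K_X\sim0$. This gives the contradiction, so $\dim\Sec(X)=7$.

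\textbf{Step 2: $\Tan(X)$ inside $\Sec(X)$.} Since $\dim\Tan(X)=6<7=\dim\Sec(X)$ and $\Tan(X)\subset\Sec(X)$, the inclusion is proper. For \eqref{eq:tan-lower}, we use that $\Tan(X)$ is an irreducible nondegenerate sixfold in $\PP^N$ (it contains $X$). The elementary bound therefore gives $\deg\Tan(X)\ge N-5$ ($=$ codimension $+1$). To double this, we argue that equality-type varieties of minimal or near-minimal degree are cones or rational normal scrolls, hence rational, so small degree is impossible.

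\textbf{Step 3: the factor $2$ — the main obstacle.} This is where I expect the real difficulty. My first attempt is to use the known result that a tangent variety of a non-linear variety is not of minimal degree: a variety of minimal degree is a scroll or a cone, and a tangent variety that is a scroll would force $X$ to be ruled by linear spaces. A second approach is a general-hyperplane-section or projection argument: project from a general point of $\Tan(X)$ to reduce $N$ by one while dropping the degree by at least $2$, using that $\Tan(X)$ is singular along $X$ with multiplicity $\ge2$. Inducting down to $N=6$, where $\Tan=\PP^6$, would give $\deg\ge2(N-6)+1$; fine-tuning the base case should reach $2(N-5)$. If this falls short, I will invoke the literature on degrees of tangent varieties of nonrational threefolds, citing it as the paper's abstract suggests (``results on secant-defective threefolds and tangent degrees, together with the nonrationality of $X$'').

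\textbf{Step 4: conclusion.} By \eqref{eq:CYfactor}, $\omega=\tdeg(X)\deg\Tan(X)\ge2(N-5)\tdeg(X)$. For the second term of the maximum: if $\tdeg(X)\ge2$, then $\omega\ge4(N-5)\ge2(N-5)+2$. If $\tdeg(X)=1$, we need $\deg\Tan(X)\ne2(N-5)$, i.e.\ the extremal degree must be excluded. I would handle this using nonrationality: extremal tangent varieties would be of the special (rational) type, contradicting that $X$ is not rational. Then $e\le20d-6c-2(N-5)-2$ follows immediately.
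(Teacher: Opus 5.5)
Your Step~1 is essentially the paper's argument: the paper uses that smooth secant-defective threefolds are rational, and your uniruledness version works equally well against $K_X\sim0$. The inclusion $\Tan(X)\subsetneq\Sec(X)$ then follows from dimensions.

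\textbf{First gap: the bound \eqref{eq:tan-lower}.} Minimal-degree considerations only give $\deg\Tan(X)\ge N-4$. Your projection induction, run down to $\PP^6$, gives only $2(N-5)-1$, as you observe. Note also that the projection must be from a point of $X$, where $\Tan(X)$ has multiplicity $\ge2$; projecting from a general point of $\Tan(X)$ lowers the degree by one only.

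The paper does not prove this bound itself; it quotes \cite[Theorem~27]{HGR}. Your approach can be completed by stopping the induction at $\PP^7$ rather than $\PP^6$. For the projected threefold $Y\subset\PP^7$ one still has $\Sec(Y)=\PP^7$, while $\Tan(Y)$ is a hypersurface of multiplicity $\ge2$ along $Y$ (the same input your inductive step uses). So a general secant line is not contained in $\Tan(Y)$, yet meets it with multiplicity $\ge4$. Hence $\deg\Tan(Y)\ge4$, and the induction yields $2(N-7)+4=2(N-5)$.

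For this you must check that the possibly singular projected images keep $\dim\Sec=7$ and $\dim\Tan=6$. Lemma~\ref{lem:lowcodim} and your Step~1 do not give this. It holds because their secant and tangent varieties are the projections of those of $X$, and for $N\ge8$ a general point of the nondegenerate $X$ lies outside the vertices of $\Sec(X)$ and $\Tan(X)$.

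\textbf{Second gap: Step~4 does not give the $+2$.} Excluding $\deg\Tan(X)=2(N-5)$ when $\tdeg(X)=1$ yields only $\omega\ge2(N-5)+1$. The paper closes this by parity: $e$ is even (odd Betti numbers of a compact K\"ahler manifold are even and $b_i=b_{6-i}$), and $20d-6c$ is even, so $\omega$ is even.

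Moreover, you only assert the exclusion of the extremal value. These extremal tangent varieties have degree $2(N-5)$, far above minimal degree, so no classification of varieties of minimal degree applies. The paper proves the exclusion separately in Lemma~\ref{lem:minimal-tangent-rational}:
\begin{enumerate}[label=\textup{(\arabic*)},itemsep=1pt,topsep=3pt]
 \item Equality $\omega=2(N-5)$ forces $\tdeg(X)=1$ and equality in the bound of \cite{HGR}.
 \item General internal projections preserve this equality, each lowering $\deg\Tan$ by exactly $2$, down to $Y'\subset\PP^7$ with $\deg\Tan(Y')=4$.
 \item One more projection from a general point of $Y'$ gives $Z\subset\PP^6$ with $\tdeg(Z)=2$, and such a $Z$ is birational to a general tangent space $\PP^3$.
 \item Since all these projections are birational, $X$ would be rational, contradicting $h^{3,0}(X)=1$.
\end{enumerate}
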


We first isolate the rationality argument needed to exclude the extremal value $\omega(X,H)=2(N-5)$.

\begin{lemma}\label{lem:minimal-tangent-rational}
Let $Y\subset\PP^N$, $N\ge7$, be an irreducible nondegenerate threefold with $\dim\Tan(Y)=6$ and $\dim\Sec(Y)=7$. 
If $\tdeg(Y)\deg\Tan(Y)=2(N-5)$, then $Y$ is rational.
\end{lemma}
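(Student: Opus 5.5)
Since $\tdeg(Y)\ge1$, the hypothesis forces $\deg\Tan(Y)\le2(N-5)$. The plan is to show that $\Tan(Y)$ is then a sixfold of minimal (or near-minimal) degree relative to its span, and that $Y$ inherits rationality from it. The key steps are as follows.

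\emph{Step 1: a degree bound for $\Tan(Y)$.} We first show $\deg\Tan(Y)\ge2(N-5)$ in this setting. The tangent variety is nondegenerate, being of dimension $6$ and spanning $\PP^N$ since it contains $Y$. Hence $\deg\Tan(Y)\ge\operatorname{codim}+1=N-5$. We would upgrade this to $2(N-5)$ by the standard fact that, since $\Tan(Y)\subsetneq\Sec(Y)$ and $\dim\Sec(Y)=7$, a general point of $\Sec(Y)$ lies on a secant line that is not tangent. Concretely, we project from a general point $z\in\Tan(Y)$, or equivalently cut by a general linear space. The intended route: take a general $\PP^{N-6}$ meeting $\Tan(Y)$ in $\deg\Tan(Y)$ points, and observe that a general hyperplane section argument (Bertini) reduces the problem to a curve section of $\Tan(Y)$. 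The double structure along $Y$ ($\Tan(Y)$ is singular along $Y$, with multiplicity at least $2$ there) forces the degree to be at least twice the minimal value. Equality $\tdeg\deg\Tan=2(N-5)$ then gives $\tdeg(Y)=1$ and $\deg\Tan(Y)=2(N-5)$.

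\emph{Step 2: classifying the extremal case.} With $\deg\Tan(Y)=2(N-5)$, we would compare with the classification of varieties of almost minimal degree, or rather with the varieties whose degree is exactly twice the codimension plus small corrections. The plan is to show that projection from a general tangent space $T_yY\cong\PP^3$ maps $\Tan(Y)$ birationally onto a variety of minimal degree. Since $\Tan(Y)$ contains $T_yY$ with high multiplicity, the projection drops the degree by at least the multiplicity computed in Step 1, landing on a sixfold (or, restricted to $Y$, a threefold) of minimal degree in $\PP^{N-4}$. The key consequence: the restriction of this tangential projection to $Y$ is birational onto its image, and the image is a threefold of minimal degree, or a rational scroll/quadric/cone. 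Such threefolds are rational, hence so is $Y$.

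\emph{Main obstacle.} The hardest step will be making the tangential projection argument rigorous: computing exactly how much the degree drops (the multiplicity of $\Tan(Y)$ along a general $T_yY$), and proving that $\pi_{T_yY}|_Y$ is birational rather than of degree $>1$. I would first try to use $\dim\Sec(Y)=7$ (non-defectivity), which via Terracini's lemma gives that a general tangent projection is generically finite on $Y$. I would then combine this with the equality $\tdeg=1$ to rule out higher degree, since a general point of $\Tan(Y)$ lies on a unique tangent space. Failing that, I would fall back on citing the known classification of threefolds with tangent varieties of minimal degree; such results appear in the literature related to Zak's and Ionescu--Russo's work. That classification shows these threefolds are scrolls or Veronese-type, hence rational.
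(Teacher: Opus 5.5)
Your proposal has a genuine gap, and it lies in Step~2, which carries the whole content of the lemma. That step needs the general tangential projection $\pi_{T_yY}|_Y$ to be birational onto a rational threefold of minimal degree, and neither birationality nor the degree claim is established.

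\begin{itemize}
\item The hypothesis $\tdeg(Y)=1$ only says that a general point of $\Tan(Y)$ lies on a unique embedded tangent space. Birationality of $\pi_{T_yY}|_Y$ is a secant-type condition: it asks whether the $\PP^4$ spanned by $T_yY$ and a general $x\in Y$ meets $Y$ again. You give no argument connecting the two conditions.
\item $\Tan(Y)$ does not contain $T_yY$ with high multiplicity. For general $y$, a general point of $T_yY$ is a general, hence smooth, point of $\Tan(Y)$.
\item The degree drop under projection from a $\PP^3$ is governed by a Segre-class term, not by a multiplicity.
\item For $N\le9$ the image in $\PP^{N-4}$ cannot be a sixfold at all.
\item The fallback to an unspecified classification of threefolds whose tangent variety has minimal degree is not a proof.
\end{itemize}

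Step~1 is also only heuristic. Being singular along a subvariety does not double the minimal-degree bound; cones over varieties of minimal degree are singular and still have minimal degree. The inequality $\deg\Tan(Y)\ge2(N-5)$ is a genuine theorem, \cite[Theorem~27]{HGR}, and the paper simply cites it.

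The missing idea is to keep working with tangent degree and to descend by projections from \emph{points} of $Y$, not from tangent spaces.
\begin{enumerate}
\item The cited bound forces $\tdeg(Y)=1$ and $\deg\Tan(Y)=2(N-5)$.
\item Project successively from general points of $Y$. These internal projections are birational while the codimension is at least $2$. By the equality case of \cite[Theorem~27]{HGR} and its projection formula, each step keeps $\tdeg=1$, $\dim\Tan=6$ and $\dim\Sec=7$, and lowers $\deg\Tan$ by exactly $2$. This reaches $Y'\subset\PP^7$, birational to $Y$, with $\tdeg(Y')=1$ and $\deg\Tan(Y')=4$.
\item Project once more from a general $y\in Y'$ lying off the vertex of $\Tan(Y')$. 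This gives $Z\subset\PP^6$, birational to $Y$, with $\Tan(Z)=\PP^6$ and $\tdeg(Z)=2$ by \cite[Lemma~16]{HGR}.
\item Tangent degree $2$ then yields rationality: $Z$ is birational to a general tangent space $T_{z_0}Z\simeq\PP^3$ by \cite[Lemma~11]{HGR}. Heuristically, a general point of $T_{z_0}Z$ lies on exactly one other tangent space $T_zZ$, and this recovers $z$. Hence $Y$ is rational.
\end{enumerate}
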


\begin{proof}
By \cite[Theorem~27]{HGR}, $\deg\Tan(Y)\ge2(N-5)$, so the hypothesis forces $\tdeg(Y)=1$ and equality in this degree bound.
The results of \cite{HGR} used below apply to the possibly singular projected images.
Project successively from general points of $Y$ and its images until the ambient space is $\PP^7$.
These internal projections are birational, since at every step the threefold has codimension at least $2$; see the proof of \cite[Theorem~4.2]{CR}.
The equality case of \cite[Theorem~27]{HGR}, together with its projection formula in Remark~26, preserves tangent degree $1$ and lowers the degree of the tangent variety by $2$ at each step.
These projections also preserve $\dim\Tan=6$ and $\dim\Sec=7$.
We thus obtain $Y'\subset\PP^7$ birational to $Y$ with $\tdeg(Y')=1$ and $\deg\Tan(Y')=4$; for $N=7$, take $Y'=Y$.

The tangent variety is singular along $Y'$.
Its vertex, the set of points from which it is a cone, is a proper linear subspace, so a general $y\in Y'$ lies outside it.
By \cite[Lemma~16]{HGR}, the projection $Z\subset\PP^6$ of $Y'$ from $y$ has $\tdeg(Z)=2$.
The projection $Y'\dashrightarrow Z$ is again birational, and \cite[Lemma~11]{HGR} makes $Z$ birational to its general tangent space $\PP^3$.
Hence $Y$ is rational.
\end{proof}

\begin{proof}[Proof of Theorem~\ref{thm:stable}]
The classification of smooth secant-defective threefolds implies that every such threefold is rational; see \cite{CC} and the summary in \cite{CCR}.
Since $K_X\sim0$, we have $h^{3,0}(X)=1$, so $X$ is not rational and hence not secant defective.
Thus $\dim\Sec(X)=7$, while Lemma~\ref{lem:lowcodim} gives $\dim\Tan(X)=6$.

As $\Tan(X)\subsetneq\Sec(X)$, \cite[Theorem~27]{HGR} gives \eqref{eq:tan-lower}.
By \eqref{eq:CYfactor}, this yields $\omega(X,H)\ge2(N-5)\tdeg(X)\ge2(N-5)$.
Lemma~\ref{lem:minimal-tangent-rational} excludes equality $\omega(X,H)=2(N-5)$, since $X$ is not rational.
Finally, $e$ is even by Poincar\'e duality, and hence so is $\omega(X,H)=20d-6c-e$.
Hence $\omega(X,H)\ge2(N-5)+2$, proving \eqref{eq:stable-omega} and \eqref{eq:stable-chern}.
\end{proof}

\begin{remark} \label{rem:veronese}
The secant nondefectivity in Theorem~\ref{thm:stable} is essential.
Let $\nu_2:\PP^3\hookrightarrow\PP^9$ be the quadratic Veronese embedding and set $X=\nu_2(\PP^3)$.
Then $\Tan(X)=\Sec(X)$, the locus of quadratic forms of rank at most $2$, and $\tdeg(X)=2$.
Write $\PP^3=\PP(U)$ with $\dim U=4$, so the Veronese image lies in $\PP(\operatorname{Sym}^2U)$.
If a general form of rank $2$ is written as $q=\ell_1\ell_2$, then $[q]\in T_{[\ell^2]}X=\PP(\ell U)$ if and only if $\ell$ divides $q$; hence exactly two tangent spaces pass through $[q]$.
Thus tangent degree greater than $1$ does occur for smooth threefolds in the stable ambient range, but this classical example is secant defective and is therefore excluded in the Calabi--Yau
setting by Theorem~\ref{thm:stable}.
\end{remark}

The same estimate gives a sufficient numerical criterion for tangent birationality.

\begin{corollary}\label{cor:numerical-tau}
Under the assumptions of Theorem~\ref{thm:stable}, if
\begin{equation}\label{eq:numerical-tau}
 \omega(X,H)<4(N-5),
\end{equation}
then $\tdeg(X)=1$, and hence
\[
 \deg\Tan(X)=\omega(X,H).
\]
\end{corollary}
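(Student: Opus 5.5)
The corollary should follow directly from Theorem~\ref{thm:stable} combined with the factorization \eqref{eq:CYfactor}. Since $N\ge7$, Lemma~\ref{lem:lowcodim} gives $\dim\Tan(X)=6$. So the tangent-incidence morphism $q:\mathcal T\to\Tan(X)$ is generically finite, $\tdeg(X)$ is a well-defined positive integer, and
\[
 \omega(X,H)=\tdeg(X)\deg\Tan(X).
\]

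The key step is to apply the lower bound \eqref{eq:tan-lower}, namely $\deg\Tan(X)\ge2(N-5)$. This is the first term of \eqref{eq:stable-omega}:
\[
 \omega(X,H)\ge 2(N-5)\,\tdeg(X).
\]
Suppose $\tdeg(X)\ge2$. Then $\omega(X,H)\ge4(N-5)$, which contradicts the hypothesis \eqref{eq:numerical-tau}. Hence $\tdeg(X)=1$. Substituting this back into the factorization gives $\deg\Tan(X)=\omega(X,H)$.

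There is no genuine obstacle here. All the substantive geometric work is already inside Theorem~\ref{thm:stable}: the secant nondefectivity of $X$ and the inclusion $\Tan(X)\subsetneq\Sec(X)$ are what make \cite[Theorem~27]{HGR} applicable. The only point to check is that we are in the case $N\ge6$ of \eqref{eq:CYfactor}, so that $\tdeg(X)$ is defined. This holds because $N\ge7$.

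The hypothesis is also consistent with the second term of \eqref{eq:stable-omega}: combined with the evenness of $\omega$, it leaves a nonempty window of values $2(N-5)+2\le\omega(X,H)<4(N-5)$ for $N\ge7$. This observation is not needed for the proof.
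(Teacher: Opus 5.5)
Your proposal is correct and follows the paper's proof: assume $\tdeg(X)\ge2$, combine \eqref{eq:tan-lower} with the factorization \eqref{eq:CYfactor} to get $\omega(X,H)\ge4(N-5)$, and contradict \eqref{eq:numerical-tau}. The identity $\deg\Tan(X)=\omega(X,H)$ then follows from \eqref{eq:CYfactor} with $\tdeg(X)=1$, exactly as you wrote.
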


\begin{proof}
If $\tdeg(X)\ge2$, then \eqref{eq:tan-lower} and \eqref{eq:CYfactor} give $\omega(X,H)\ge4(N-5)$, contrary to \eqref{eq:numerical-tau}.
\end{proof}

This numerical criterion is rarely satisfied in the examples of Section~\ref{sec:stableexamples},
where we use the geometric criteria of Section~\ref{sec:birationality} instead.
We first develop the dual and Hodge-theoretic consequences of the jet-bundle formulas.


\section{Projective duality}\label{sec:dual}

The dual variety gives a second interpretation of the jet-bundle Chern classes.
Recall that $X^\vee\subset(\PP^N)^\vee$ is the Zariski closure of the hyperplanes tangent to $X$.
For threefolds with trivial canonical class, the usual hypersurface assumption on the dual variety is automatic, and its degree is computed by the top Chern class of $J^1(H)$.

\begin{theorem}\label{thm:dual-hypersurface}
Let $X\subset\PP^N$ be a smooth nondegenerate projective threefold with $K_X\sim0$, and let $H$ be its hyperplane divisor.
Then the dual variety $X^\vee\subset(\PP^N)^\vee$ is a hypersurface of even degree.
Moreover,
\begin{equation}\label{eq:dual-degree}
 \deg X^\vee
 =\int_Xc_3(J^1(H))
 =4d+2c-e.
\end{equation}
\end{theorem}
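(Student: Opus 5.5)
We treat the statement in three parts: the Chern class identity, the hypersurface property, and the parity of the degree.

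\emph{Chern class identity.} The second equality in \eqref{eq:dual-degree} is immediate from \eqref{eq:jetchern}: integrating $c_3(J^1(H))=-c_3(X)+2c_2(X)H+4H^3$ over $X$ gives $4d+2c-e$.

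\emph{Degree formula and hypersurface property.} We use the conormal variety $\mathcal C=\PP(N^*_{X/\PP^N}(H))\subset X\times(\PP^N)^\vee$, which parametrizes pairs $(x,[\Lambda])$ with $T_xX\subset\Lambda$. By the dualized sequence \eqref{eq:jet-dual-sequence}, its fibre over $x$ is the projectivization of the kernel of $V^*\to J^1(H)_x$. Hence $\mathcal C$ is a $\PP^{N-4}$-bundle of dimension $N-1$, and the projection $\pi:\mathcal C\to(\PP^N)^\vee$ has image $X^\vee$. Set $\zeta=\pi^*c_1(\cO(1))$. Using \eqref{eq:segre-convention} and the relation $c(N^*(H))\,c(J^1(H))=1$ read off from \eqref{eq:jet-dual-sequence}, we get
\[
\int_{\mathcal C}\zeta^{N-1}=\int_X c_3(J^1(H)),
\]
up to the sign convention, which we fix by checking the quintic: $\deg X_5^\vee=5\cdot 4^3=320=20+100+200$. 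This is the standard formula; see \cite{Laz} or the classical Katz--Kleiman computation. Consequently $\deg\pi\cdot\deg X^\vee=4d+2c-e$ when $X^\vee$ is a hypersurface, and $4d+2c-e=0$ otherwise. Moreover, when $X^\vee$ is a hypersurface, $\pi$ is birational by reflexivity in characteristic zero, since a general tangent hyperplane is tangent at a single point.

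The main step is to exclude the defective case, and we would argue through the dual defect. If $X^\vee$ is not a hypersurface, Ein's results apply: a general tangent hyperplane is tangent along a linear space $\PP^k$ with $k\ge1$, and $X$ is covered by lines $\ell$ with $\deg N_{\ell/X}$ determined by the defect. The normal bundle $N_{\ell/X}$ splits with $K_X\cdot\ell=0$. Ein's theorem gives $\deg K_X|_\ell=-(k+2)/2\cdot\ldots<0$; more precisely, for defect $k>0$ one has $K_X\cdot\ell=-\tfrac{n+k}{2}-1<0$. This contradicts $K_X\sim0$. Alternatively, the defective case forces $c_3(J^1(H))=0$, and the positivity of $J^1(H)$ could be used to rule that out, but the Ein argument is cleaner. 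Either way, $X^\vee$ is a hypersurface, and $\deg X^\vee=4d+2c-e$.

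\emph{Parity.} The Riemann--Roch integrality $\chi(H)=d/6+c/12\in\mathbb Z$ gives $2d+c\equiv0\pmod{12}$, so $c$ is even. Also $e=2(h^{1,1}-h^{2,1})$ is even; without $h^1(\cO_X)=0$ one can use $e=\sum(-1)^ib_i$ together with the Hodge symmetry $b_3$ even, or $b_1$ even. Hence $4d+2c-e$ is even. The step we expect to need the most care is locating the precise form of Ein's defect theorem (existence of lines with $K_X\cdot\ell<0$) and checking that it applies to possibly incomplete embeddings. It does, since the theorem only uses the embedding.
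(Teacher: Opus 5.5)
Your proof is correct, but the decisive step, ruling out dual defectivity, goes by a different route from the paper.

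The paper quotes the classification of smooth dual-defective threefolds (a nonlinear one is a scroll over a curve). It then gets a contradiction by adjunction on a fibre $F\simeq\PP^2$ with $N_{F/X}\simeq\cO_F$, which would force $K_F\sim0$.

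You apply Ein's contact-locus theorem directly instead. A defect $k>0$ gives a contact locus $L\simeq\PP^k$ with $N_{L/X}\simeq N_{L/X}^*(1)$. Hence $c_1(N_{L/X})\cdot\ell=\frac{3-k}{2}$ for a line $\ell\subset L$, and adjunction gives $K_X\cdot\ell=-(k+1)-\frac{3-k}{2}=-\frac{k+5}{2}<0$, which is impossible when $K_X\sim0$.

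Your version works in every dimension and for any $X$ with $K_X$ nef. It also bypasses the threefold classification, which is itself derived from Ein's results. The paper's version is shorter once that classification is cited. Similarly, you derive the degree formula from the conormal bundle $\mathcal C=\PP(N^*_{X/\PP^N}(H))$ instead of citing the jet-bundle formula; that is the standard argument and it is fine.

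Some tidying is needed:
\begin{itemize}
\item There is no sign to fix ``by the quintic'', and a single example could not fix a convention anyway. With \eqref{eq:segre-convention}, the sequence $0\to N^*_{X/\PP^N}(H)\to V^*\otimes\cO_X\to J^1(H)\to0$ gives $s(N^*_{X/\PP^N}(H))=c(J^1(H))$. Hence $\int_{\mathcal C}\zeta^{N-1}=\int_Xc_3(J^1(H))$ exactly.
\item Replace the garbled ``$-(k+2)/2\cdot\ldots$'' with the adjunction computation above.
\item Drop the alternative via positivity of $J^1(H)$. Global generation only yields $\int_Xc_3(J^1(H))\ge0$ and cannot exclude the value $0$.
\item The parity of $c$ is irrelevant, because $c$ enters as $2c$. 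Only the evenness of $e$ is needed, and your argument that $b_3$ is even (skew-symmetric nondegenerate pairing on $H^3$) supplies it; this is the paper's Poincar\'e-duality argument.
\end{itemize}
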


\begin{proof}
By the classification of smooth dual-defective threefolds, a nonlinear dual-defective threefold is a scroll over a curve; see, for example, \cite{Sierra}.
If $X\to B$ were such a scroll, a general fibre $F\simeq\PP^2$ would satisfy $N_{F/X}\simeq\cO_F$.
By adjunction and the triviality of $N_{F/X}$, we get $K_F\sim K_X|_F\sim0$, whereas $\cO_F(K_F)\simeq\cO_{\PP^2}(-3)$ under $F\simeq\PP^2$, a contradiction.
Thus $X$ is not dual defective.
Formula \eqref{eq:dual-degree} is the classical jet-bundle degree formula for the discriminant; see \cite{LPS} and \eqref{eq:jetchern}. 
By Poincar\'e duality, the topological Euler characteristic $e$ is even. Hence \eqref{eq:dual-degree} shows that $\deg X^\vee$ is even.
\end{proof}

In the critical case $N=6$, Theorem~\ref{thm:P6exact} gives the explicit dual degree
\[
 \deg X^\vee
 =4d+2(84-2d)-(49d-588-d^2)
 =d^2-49d+756.
\]

Thus the first jet bundle simultaneously controls the dual and tangent geometries through two natural codimension-$3$ classes:
\[
 \begin{array}{ccc}
 \displaystyle\int_Xc_3(J^1(H))=4d+2c-e
 &\longleftrightarrow&X^\vee,\\[1mm]
 \displaystyle\int_Xs_3(J^1(H)^*)=20d-6c-e
 &\longleftrightarrow&\Tan(X).
 \end{array}
\]
The integral of $c_3(J^1(H))$ is the degree of the dual hypersurface, whereas the integral of $s_3(J^1(H)^*)$ is the product $\tdeg(X)\deg\Tan(X)$ for $N\ge6$.
Thus the Chern and Segre sides of the first jet bundle encode the dual and tangent geometries, respectively.

\begin{example}\label{ex:ci-dual}
For a smooth hypersurface $Y\subset\PP^N$ of degree $d\ge2$, the classical dual-degree formula is
\[
 \deg Y^\vee=d(d-1)^{N-1}.
\]
For the Calabi--Yau complete intersections of Example~\ref{ex:ci-tangent}, the splitting of $N_{X/\PP^{r+3}}(-H)$ and \eqref{eq:normal-segre} give $c(J^1(H)^*)=\prod_i(1+a_iH)^{-1}$, hence
\[
 c(J^1(H))=\prod_i(1-a_iH)^{-1},
 \qquad
 \deg X^\vee=d\sum_{i\le j\le k}a_i a_j a_k.
\]
Thus tangent and dual geometry are encoded by the third elementary and complete homogeneous symmetric polynomials, respectively.
They give
\[
\begin{array}{c|c|c|c}
 X&d&\omega(X,H)&\deg X^\vee\\ \hline
 X_5\subset\PP^4&5&0&320\\
 X_{2,4}\subset\PP^5&8&0&320\\
 X_{3,3}\subset\PP^5&9&0&288\\
 X_{2,2,3}\subset\PP^6&12&24&312\\
 X_{2,2,2,2}\subset\PP^7&16&64&320
\end{array}
\]
For the quintic threefold $X_5$, $\deg X_5^\vee=320$ agrees with $5(5-1)^3$, while the tangential Chern gap is $0$.
For $X_{2,2,2,2}$, the table gives the product $\tdeg(X_{2,2,2,2})\deg\Tan(X)=64$ for every smooth member; Theorem~\ref{thm:2222} proves $\tdeg(X_{2,2,2,2})=1$ for a general member.
\end{example}


\section{Refined Chern, Hodge, and degree bounds}\label{sec:otherbounds}

Unless explicitly stated otherwise, $(X,H)$ is a very amply polarized Calabi--Yau threefold, and $X\subset\PP^N$ is the complete embedding defined by $|H|$.
The jet-bundle inequalities in Subsections~\ref{subsec:jet-mixed} and~\ref{subsec:jet-volume} will be stated more generally for smooth projective threefolds with $K_X\sim0$, without assuming $H^1(X,\cO_X)=0$.
Riemann--Roch and Kodaira vanishing give
\begin{equation}\label{eq:RR}
 N+1=h^0(X,\cO_X(H))=\chi(X,\cO_X(H))=\frac d6+\frac c{12},
 \qquad
 c=12(N+1)-2d.
\end{equation}
In particular, $c$ is even; so is $e=2(h^{1,1}(X)-h^{2,1}(X))$.
We use \eqref{eq:RR} throughout to trade the invariant $c$ against the embedding dimension $N$.

Subsections~\ref{subsec:jet-mixed} and~\ref{subsec:jet-volume} extract two lower bounds for $e$ from the positivity of $J^1(H)$, the first from log-concavity of mixed intersections on $\PP(J^1(H)^*)$ and the second from a volume estimate on the same bundle.
Subsection~\ref{subsec:geometric-bounds} adds two inputs of a different nature: apparent double points under a general projection, and a canonical hyperplane section.
The last two subsections combine these with the tangent- and dual-variety estimates of Sections~\ref{sec:tangentdegree} and~\ref{sec:dual}.
Eliminating $N$ gives uniform bounds on the Hodge numbers, improving
\begin{equation}\label{eq:KW-old}
 -36d-80
 \le h^{1,1}(X)-h^{2,1}(X)
 \le6d+40
\end{equation}
from \cite{KW}, whereas keeping $N$ fixed constrains the degree, the tangent degree, and the degree of the dual hypersurface.


\subsection{Jet-bundle positivity and mixed intersections}\label{subsec:jet-mixed}

We begin by comparing the vector-bundle positivity inputs.
The bounds in \cite[Corollary~3.4]{KW} are obtained from the global generation of $\Omega_X^1(2H)$; for the first jet bundle the corresponding inequalities have smaller coefficients.
The calculation uses only $c_1(X)=0$, so it applies under the weaker assumption $K_X\sim0$.

\begin{proposition}\label{prop:numerical}
Let $X$ be a smooth projective threefold with $K_X\sim0$, and let $H$ be very ample.
Then
\begin{align}
 10H^2-c_2(X)&\ge0,\label{eq:c2bound}\\
 4H^3+2c_2(X)H-c_3(X)&\ge0,\label{eq:c3top}\\
 20H^3+2c_2(X)H+c_3(X)&\ge0,\label{eq:c21}\\
 20H^3-6c_2(X)H-c_3(X)&\ge0,\label{eq:c3seg}
\end{align}
and $c_2(X)\cdot D\ge0$ for every nef divisor class $D$.
In particular $0\le c\le10d$ and $-20d-2c\le e\le\min\{4d+2c,\,20d-6c\}$.
\end{proposition}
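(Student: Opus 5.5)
The plan is to use only that $J^1(H)$ is globally generated of rank $4$, being a quotient of $V^*\otimes\cO_X$, together with the Chern class formulas \eqref{eq:jetchern}. For a globally generated bundle $E$, every Schur polynomial in its Chern classes (Fulton--Lazarsfeld), paired with products of nef classes, is nonnegative. In particular the Chern classes $c_i(E)$ and the Segre classes $s_i(E^*)$ of the quotient $p^*E\to\cO_{\PP(E^*)}(1)$ are nonnegative on effective cycles, and so are mixed classes such as $c_1c_2-c_3$. I would work through the four inequalities in turn, each time choosing the appropriate Schur class of $E=J^1(H)$.

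For \eqref{eq:c2bound}, I compute $s_2(J^1(H)^*)=c_1(E)^2-c_2(E)=16H^2-c_2(X)-6H^2=10H^2-c_2(X)$. This is the pushforward $p_*\xi^5$, and $\xi$ is nef, so it is nonnegative against every nef class; pairing with $H$ gives $c\le10d$. For \eqref{eq:c3top}, $c_3(J^1(H))=4H^3+2c_2(X)H-c_3(X)$ is nonnegative because $E$ is globally generated (alternatively, it is $\deg X^\vee$ by Theorem~\ref{thm:dual-hypersurface}). For \eqref{eq:c3seg}, this is exactly \eqref{eq:CYsegre} together with $\int_{\mathcal T}\xi^6\ge0$. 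For \eqref{eq:c21}, I would use the Schur class $c_1c_2-c_3$, which is nonnegative for globally generated bundles:
\[
4H\bigl(c_2(X)+6H^2\bigr)-\bigl(-c_3(X)+2c_2(X)H+4H^3\bigr)=20H^3+2c_2(X)H+c_3(X),
\]
which matches the claim.

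For $c_2(X)\cdot D\ge0$ with $D$ nef, I would apply Miyaoka's pseudo-effectivity of $c_2$ for minimal models: $K_X\sim0$ is nef, so $3c_2(X)-c_1(X)^2=3c_2(X)$ is nonnegative on nef classes. Alternatively, the pairing with ample $H$ follows from the Bogomolov inequality for the semistable bundle $T_X$. Finally I collect the consequences: $0\le c$ and $c\le10d$ give the first range, $e\le4d+2c$ comes from \eqref{eq:c3top}, $e\le20d-6c$ from \eqref{eq:c3seg}, and $e\ge-20d-2c$ from \eqref{eq:c21}.

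The main obstacle is choosing the correct Schur class for \eqref{eq:c21} and checking the sign conventions relating the Segre classes of $E^*$ to $E$. The step $c_2\cdot D\ge0$ is not a Schur-positivity statement at all and needs the external Miyaoka input.
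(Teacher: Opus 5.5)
Your proposal is correct and follows essentially the same route as the paper: nonnegativity of the Schur classes $c_1^2-c_2$, $c_3$, $c_1c_2-c_3$ and $c_1^3-2c_1c_2+c_3$ of the globally generated bundle $J^1(H)$, evaluated with \eqref{eq:jetchern}, together with Miyaoka's semipositivity theorem for $c_2(X)\cdot D\ge0$. The only cosmetic difference is that you realize the first and last of these classes as $p_*\xi^5$ and $p_*\xi^6$ with $\xi$ nef, which is the same positivity in projective-bundle form.
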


\begin{proof}
Since $J^1(H)$ is globally generated, the Schur classes of $J^1(H)$ associated with $c_1^2-c_2$, $c_3$, $c_1c_2-c_3$, and $c_1^3-2c_1c_2+c_3$ are numerically nonnegative; by \eqref{eq:jetchern} these are the four displayed expressions.
The remaining inequality is Miyaoka's semipositivity theorem for minimal threefolds with numerically trivial canonical class; see \cite[Section~2]{KW}.
\end{proof}

Here an inequality between classes of codimension $2$ means nonnegativity after intersection with every nef divisor; for classes of codimension $3$ we use the degree convention of Subsection~\ref{subsec:notation}.
In particular, \eqref{eq:c2bound} and \eqref{eq:c3seg} are the $c_1(X)=0$ specializations of \cite[Corollary~2.4]{LZ}, and they improve the corresponding inequalities obtained from $\Omega_X^1(2H)$.

For the remainder of this subsection and the next, set
\[
 J=J^1(H),\qquad p:\mathcal T=\PP(J^*)\to X,\qquad
 \xi=c_1(\cO_{\mathcal T}(1)),\qquad \eta=p^*H.
\]
The variety $\mathcal T$ is smooth of dimension $6$. Since $J$ is globally generated, $\xi$ is nef, while $\eta$ is nef because $H$ is ample.
Since $\operatorname{rk}J=4$, the projective-bundle formula, with our convention that $\PP(E)$ parametrizes lines, reads
\begin{equation}\label{eq:projective-bundle-pushforward}
 p_*(\xi^{3+i})=s_i(J^*)\qquad(i\ge0).
\end{equation}
Together with \eqref{eq:jetchern}, this gives
\begin{equation}\label{eq:jet-mixed-intersections}
 \int_{\mathcal T}\xi^4\eta^2=4d,\qquad
 \int_{\mathcal T}\xi^5\eta=10d-c,\qquad
 \int_{\mathcal T}\xi^6=20d-6c-e.
\end{equation}

\begin{theorem}\label{thm:logconcave}
Let $X$ be a smooth projective threefold with $K_X\sim0$, and let $H$ be very ample.
Then
\begin{equation}\label{eq:logconcave}
 e\ge-5d-c-\frac{c^2}{4d}.
\end{equation}
Equality is attained by the quintic threefold $X_5\subset\PP^4$.
This bound is at least as strong as $e\ge-20d-2c$, and is strictly stronger when $c<10d$.
\end{theorem}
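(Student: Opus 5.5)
The plan is to apply the Khovanskii--Teissier inequality to the two nef classes $\xi$ and $\eta$ on the smooth sixfold $\mathcal T=\PP(J^*)$. Set $a_k:=\int_{\mathcal T}\xi^{k}\eta^{6-k}$. The sequence $k\mapsto a_k$ is log-concave for nef classes \cite[Section~1.6]{Laz}, so in particular $a_5^2\ge a_4a_6$. By \eqref{eq:jet-mixed-intersections}, $a_4=4d$, $a_5=10d-c$ and $a_6=20d-6c-e$, and the inequality reads
\[
 (10d-c)^2\ \ge\ 4d\,(20d-6c-e).
\]
Since $d>0$, we divide by $4d$ and rearrange: $e\ge 20d-6c-\frac{(10d-c)^2}{4d}=20d-6c-25d+5c-\frac{c^2}{4d}=-5d-c-\frac{c^2}{4d}$, which is \eqref{eq:logconcave}.

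The one point that needs care is that Khovanskii--Teissier needs only nefness and is valid on any smooth projective variety, so it applies here. We use the triple $(\xi^4\eta^2,\xi^5\eta,\xi^6)$, which is the standard inequality $(D_1^{5}D_2)^2\ge (D_1^{4}D_2^2)(D_1^6)$ with $D_1=\xi$, $D_2=\eta$. It is worth checking that the lower mixed terms vanish or are irrelevant: $\eta^4=0$ since $\eta$ is pulled back from the threefold, so $a_k=0$ for $k\le2$, and $a_3=\int_{\mathcal T}\xi^3\eta^3=d$. None of these enter the chosen triple. The main obstacle is thus only confirming the pushforward computations \eqref{eq:jet-mixed-intersections}, which are already stated, so nothing substantial remains.

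For the quintic, $d=5$ and $c=50$, so $c=10d$, and $e=-200$. The right-hand side is $-25-50-\frac{2500}{20}=-200$, so equality holds. This is consistent with $\omega=0$ and $a_5=0$, both sides of the log-concavity inequality being zero.

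For the comparison, subtract the two lower bounds:
\[
 \Bigl(-5d-c-\frac{c^2}{4d}\Bigr)-(-20d-2c)=15d+c-\frac{c^2}{4d}=\frac{(10d-c)(6d+c)}{4d}.
\]
This uses $(10d-c)(6d+c)=60d^2+4dc-c^2$. Proposition~\ref{prop:numerical} gives $0\le c\le10d$, so the difference is nonnegative, and it is strictly positive exactly when $c<10d$.
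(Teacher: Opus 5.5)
Your proposal is correct and follows the paper's proof essentially verbatim: you apply Khovanskii--Teissier to the nef pair $(\xi,\eta)$ on $\mathcal T$ with mixed intersections $4d$, $10d-c$, $20d-6c-e$. You then use the same quintic check and the same factorization $\frac{(10d-c)(6d+c)}{4d}$, combined with $0\le c\le 10d$ from Proposition~\ref{prop:numerical}.
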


\begin{proof}
By the Khovanskii--Teissier inequality for mixed intersections of nef classes (see \cite[Section~1.6]{Laz}),
\[
 \left(\int_{\mathcal T}\xi^5\eta\right)^2
 \ge
 \left(\int_{\mathcal T}\xi^6\right)
 \left(\int_{\mathcal T}\xi^4\eta^2\right).
\]
This form also follows from the Hodge index theorem on a general complete-intersection surface after an ample approximation of $\xi$, followed by passage to the nef limit.
Thus no bigness assumption on $\xi$ is needed.
Substitution gives $(10d-c)^2\ge4d(20d-6c-e)$, which is \eqref{eq:logconcave} because $d>0$.

For the quintic threefold, $(d,c,e)=(5,50,-200)$, and equality holds.
Finally, Proposition~\ref{prop:numerical} gives $0\le c\le10d$, and
\[
 \left(-5d-c-\frac{c^2}{4d}\right)-(-20d-2c)
 =\frac{(10d-c)(6d+c)}{4d}\ge0.
\]
\end{proof}

We next obtain a complementary lower bound from a volume argument on the same projective bundle.


\subsection{A volume inequality for the first jet bundle}\label{subsec:jet-volume}

In this subsection, $X$ is a smooth projective threefold with $K_X\sim0$, and $H$ is very ample.
Sun \cite[Corollary~6.3]{Sun} proved
\begin{equation}\label{eq:Sun}
 e+10c\ge0.
\end{equation}
We use the first jet bundle to improve the coefficient from $10$ to $6$ and to obtain a nonlinear refinement. 
The proof combines a section estimate for symmetric powers with the algebraic Morse inequality on the same projective bundle used in Subsection~\ref{subsec:jet-mixed}. 
We first note that Proposition~\ref{prop:numerical} applied with $D=H$ implies $0\le c\le10d$. 

\begin{theorem}\label{thm:jet-volume}
In the above setting,
\begin{equation}\label{eq:jet-volume-Chern}
 e\ge40d\Big[\left(1-\frac{c}{10d}\right)^{3/2}-1\Big].
\end{equation}
In particular, $e+6c\ge0$. Moreover, the following conditions are equivalent:
\begin{enumerate}[label=\textup{(\roman*)},itemsep=1pt,topsep=3pt]
 \item $e+6c=0$;
 \item $c=0$;
 \item there exists a finite \'{e}tale surjective morphism $\varpi:A\to X$ from an abelian threefold $A$.
\end{enumerate}
Equality in  \eqref{eq:jet-volume-Chern} is attained both by abelian threefolds and by quintic threefolds. 
\end{theorem}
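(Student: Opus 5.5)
We plan to compare a lower bound and an upper bound for the volume of the perturbed class $\xi-t\eta$ on $\mathcal T=\PP(J^*)$, for $t\in[0,1]$, and then optimize in $t$.

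\emph{Lower bound.} Since $\xi$ and $\eta$ are nef, the algebraic Morse inequality \cite[Proposition~3.3]{BFJ} on the sixfold $\mathcal T$ gives
\[
 \operatorname{vol}(\xi-t\eta)\ge\int_{\mathcal T}\xi^6-6t\int_{\mathcal T}\xi^5\eta=(20d-6c-e)-6t(10d-c),
\]
using \eqref{eq:jet-mixed-intersections}.

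\emph{Upper bound.} We have $H^0(\mathcal T,\cO(m\xi-mt\eta))=H^0(X,\operatorname{Sym}^mJ\otimes\cO_X(-mtH))$ for $mt\in\mathbb Z$. We would first prove that $J=J^1(H)$ is $H$-semistable. The input is that $T_X$, hence $\Omega^1_X(H)$, is $H$-semistable, since $c_1=0$ and $X$ carries a Ricci-flat metric. The jet sequence is a nonsplit extension of $\cO_X(H)$ by $\Omega_X^1(H)$, and both pieces have slope $d$ with respect to $H$, so semistability of $J$ should follow directly. In characteristic zero, $\operatorname{Sym}^mJ$ is then semistable of slope $md$, and its twist has slope $m(1-t)d$. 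Langer's estimate \cite[Theorem~3.3]{Langer} bounds $h^0$ of a semistable sheaf $E$ on a threefold by roughly $\operatorname{rk}E\cdot\mu^3/(6d^2)+O(\text{lower order})$. With $\operatorname{rk}\operatorname{Sym}^mJ\sim m^3/6$ and $\mu=m(1-t)d$, this gives $h^0\lesssim m^6(1-t)^3d/36$. Multiplying by $6!$, we obtain
\[
 \operatorname{vol}(\xi-t\eta)\le20d(1-t)^3 .
\]
This is consistent with $t=0$, where $\operatorname{vol}\xi=\omega\le20d$. The main obstacle is this step: establishing semistability of $J$ (rather than only of $\Omega^1_X$), and checking that the lower-order terms in Langer's bound are genuinely $o(m^6)$ uniformly, so that the constant $20d$ is exact.

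\emph{Optimization.} Combining the two bounds gives
\[
 20d-6c-e\le\min_{t\in[0,1]}\bigl[20d(1-t)^3+6t(10d-c)\bigr].
\]
Set $s=\sqrt{1-c/(10d)}\in[0,1]$, which is well defined by $0\le c\le10d$. The minimum is at $1-t=s$ and equals $60ds^2-40ds^3$. Substituting $c=10d(1-s^2)$ yields $e\ge40d(s^3-1)$, which is \eqref{eq:jet-volume-Chern}.

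\emph{Consequences and equality cases.} We have
\[
 e+6c\ge40d(s^3-1)+60d(1-s^2)=20d(1-s)^2(2s+1)\ge0,
\]
with equality iff $s=1$, i.e.\ (i)$\Leftrightarrow$(ii). For (ii)$\Rightarrow$(iii): $c_1=0$ and $c_2\cdot H=0$ with $H$ ample force $X$ to be flat by Yau's theorem, hence a finite étale quotient of an abelian threefold. For (iii)$\Rightarrow$(ii): $\varpi^*c_2(X)=c_2(A)=0$. Equality in \eqref{eq:jet-volume-Chern} for type A holds since $c=e=0$ and $s=1$. For the quintic, $(d,c,e)=(5,50,-200)$ gives $s=0$ and $-40d=-200$.
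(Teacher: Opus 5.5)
Your analytic argument is essentially the paper's: the Morse lower bound $\operatorname{vol}(\xi-t\eta)\ge\int\xi^6-6t\int\xi^5\eta$, the upper bound $20d(1-t)^3$ from semistability of $\operatorname{Sym}^mJ\otimes\cO_X(-mtH)$ plus Langer, and the optimization at $1-t=s$ all match, and your arithmetic is correct.

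Neither point you flag as the main obstacle is actually open. Semistability of $J$ is exactly your extension argument: two semistable pieces of slope $d$, and nonsplitness plays no role. Symmetric powers of semistable sheaves are then semistable in characteristic zero. The paper instead filters $\operatorname{Sym}^m(J(-H))$ by the pieces $\operatorname{Sym}^j\Omega^1_X$, each polystable of slope $0$, which avoids the general tensor-product theorem. For the error term, use Langer's bound in its explicit form $h^0(F)\le\rho d\binom{\mu_H^+(F)/d+3+f(\rho)}{3}$, with $f(\rho)=-1+\sum_{j=1}^{\rho}1/j\le\log\rho$. The rank enters only through this logarithmic shift, so with $\rho_m=\binom{m+3}{3}$ the error is $O_t(m^5\log m)$ for each fixed rational $t$. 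No uniformity in $t$ is needed: the resulting inequality is polynomial in $t$ and extends to real $t\in[0,1]$ by continuity.

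The genuine gap is in the equivalence of (i)--(iii). From $e+6c\ge20d(1-s)^2(2s+1)$ you get (i)$\Rightarrow$(ii), but not (ii)$\Rightarrow$(i). When $c=0$ the right-hand side is $0$, so the inequality only gives $e\ge0$, not $e=0$. Your ``equality iff $s=1$'' concerns the lower bound, not $e+6c$ itself, and the other jet-bundle inequalities only give $0\le e\le4d$. Moreover, your (iii)$\Rightarrow$(ii) records only $c_2(X)\cdot H=0$, so your chain of implications never returns to (i).

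You must show $e=0$ under (iii). For instance, $0=e(A)=(\deg\varpi)\,e(X)$ by multiplicativity of the topological Euler characteristic under finite \'etale covers. Alternatively, $\varpi^*c_3(X)=c_3(A)=0$ and $\varpi^*$ is injective on rational cohomology. This is how the paper closes the loop. In (ii)$\Rightarrow$(iii) you should also say why the torus from Bieberbach is abelian: $\varpi^*H$ is ample.
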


For the proof and subsequent comparisons, put
\begin{equation}\label{eq:jet-volume-correction}
 s=\sqrt{1-\frac{c}{10d}},\qquad
 \mathcal Q(d,c)=20d(1-s)^2(1+2s).
\end{equation}
We shall prove the equivalent form\footnote{Here $\mathcal Q(d,c)$ appears naturally as the optimized correction term in the proof of Theorem~\ref{thm:jet-volume}. 
The two forms agree because $c=10d(1-s^2)$ gives $\mathcal Q(d,c)-6c=40d\bigl[(1-\frac{c}{10d})^{3/2}-1\bigr]$.}
\begin{equation}\label{eq:jet-volume-gap}
 e+6c\ge\mathcal Q(d,c)\ge0.
\end{equation}

We first record the semistability input needed for the section estimate.
For a torsion-free sheaf $F$ of rank $\rho>0$, set
\[
 \mu_H(F)=\frac{c_1(F)\cdot H^2}{\rho},\qquad
 \mu_H^+(F)=\max_{0\ne G\subseteq F}\mu_H(G).
\]
The latter is the slope of the first nonzero term of the Harder--Narasimhan filtration.
The sheaf $F$ is $\mu_H$-semistable (respectively stable) if $\mu_H(G)\le\mu_H(F)$ (respectively $<$) for every subsheaf with $0<\operatorname{rk}G<\rho$.
A bundle is $\mu_H$-polystable if it is a direct sum of stable bundles of the same slope.
In particular, semistability implies $\mu_H^+(F)=\mu_H(F)$.

Retain $J$, $p:\mathcal T\to X$, $\xi$, and $\eta$ from Subsection~\ref{subsec:jet-mixed}.
With our convention, $\mathcal T=\PP(J^*)$ parametrizes $1$-dimensional quotients of $J$, and hence
$p_*\cO_{\mathcal T}(m)=\operatorname{Sym}^mJ$ for $m\ge0$.

\begin{lemma}\label{lem:jet-semistability}
For every integer $m\ge0$, the bundle $\operatorname{Sym}^mJ$ is $\mu_H$-semistable of slope $md$.
\end{lemma}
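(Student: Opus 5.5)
The plan is to prove that $J$ itself is $\mu_H$-semistable of slope $d$, and then pass to symmetric powers using the characteristic-zero theory of semistable sheaves. The slope is immediate from \eqref{eq:jetchern}: $c_1(J)=4H$ and $\operatorname{rk}J=4$, so $\mu_H(J)=d$. Since $\operatorname{Sym}^m$ multiplies slopes by $m$, once semistability is known we get $\mu_H(\operatorname{Sym}^mJ)=md$. Concretely, $c_1(\operatorname{Sym}^mJ)=\frac{m}{4}\binom{m+3}{3}c_1(J)$ and $\operatorname{rk}\operatorname{Sym}^mJ=\binom{m+3}{3}$. The case $m=0$ is trivial.

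\emph{Step 1: semistability of $\Omega_X^1$.} Since $K_X\sim0$, Yau's theorem gives a Ricci-flat Kähler metric in the class $c_1(\cO_X(H))$. Its Levi-Civita connection is a Hermitian--Einstein connection on $T_X$ with respect to $H$. By Kobayashi--Lübke, $T_X$ is therefore $\mu_H$-polystable, and hence so is $\Omega_X^1$. This uses only $c_1(X)=0$, so the hypothesis $H^1(X,\cO_X)=0$ is not needed. Twisting by $H$, the bundle $\Omega_X^1(H)$ is $\mu_H$-semistable with slope $0+d=d$.

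\emph{Step 2: semistability of $J$.} The jet sequence $0\to\Omega_X^1(H)\to J\to\cO_X(H)\to0$ exhibits $J$ as an extension of two semistable sheaves of the same slope $d$. Such an extension is semistable. To see this, let $G\subseteq J$ be a subsheaf. Let $G'=G\cap\Omega_X^1(H)$, and let $G''$ be the image of $G$ in $\cO_X(H)$. Then $c_1(G)H^2=c_1(G')H^2+c_1(G'')H^2\le d\operatorname{rk}G'+d\operatorname{rk}G''=d\operatorname{rk}G$. The inequality holds by semistability of each piece, with the convention that zero-rank pieces contribute nonpositively; for $G''$ this uses that a torsion-free rank-one subsheaf of $\cO_X(H)$ has $c_1\cdot H^2\le d$.

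\emph{Step 3: symmetric powers.} In characteristic zero, tensor products and symmetric powers of $\mu_H$-semistable bundles are $\mu_H$-semistable. One reference is Maruyama. Alternatively, by Mehta--Ramanathan, the restriction of $J$ to a general complete-intersection curve in $|kH|$ with $k\gg0$ is semistable, and on a curve symmetric powers of semistable bundles are semistable (Narasimhan--Seshadri, or Hartshorne). If $\operatorname{Sym}^mJ$ were $H$-unstable, its maximal destabilizing subsheaf would restrict to a destabilizing subsheaf on such a curve, a contradiction.

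I expect the main obstacle to be Step 1, the input that $T_X$ is $H$-semistable. This genuinely requires the analytic Ricci-flat metric, or alternatively Miyaoka's generic semipositivity combined with $c_1=0$. One would cite it rather than reprove it. Steps 2 and 3 are standard formal manipulations.
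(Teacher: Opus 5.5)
Your proof is correct, but it routes the argument differently from the paper. The paper never isolates semistability of $J$ itself. It takes from Yau's theorem and the Kobayashi--Hitchin correspondence that every $\operatorname{Sym}^j\Omega_X^1$ is polystable of slope zero, since the Ricci-flat metric induces Hermitian--Einstein metrics on all symmetric powers. It then filters $\operatorname{Sym}^m(J(-H))$ using $0\to\Omega_X^1\to J(-H)\to\cO_X\to0$; the graded pieces are $\operatorname{Sym}^j\Omega_X^1$ for $0\le j\le m$. The proof concludes by iterating exactly the extension lemma you prove in Step~2.

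You instead apply the extension lemma once, to get $J$ semistable, and then invoke the general characteristic-zero theorem that symmetric powers of $\mu_H$-semistable sheaves are semistable (Mehta--Ramanathan plus the curve case).

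The paper's route gets symmetric-power semistability directly from an explicit metric on a slope-zero bundle, so it needs no restriction theorem. The price is writing down the canonical filtration on $\operatorname{Sym}^m$ of a possibly non-split extension. Your route avoids that filtration, and it needs only semistability, not polystability, of $\Omega_X^1$. Step~1 could therefore be replaced by Miyaoka's generic semipositivity, making the whole argument algebraic. The price there is relying on the heavier general tensor-product theorem.
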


\begin{proof}
By Yau's theorem \cite{Yau} and the Kobayashi--Hitchin correspondence, $\Omega_X^1$ and all its symmetric powers are $\mu_H$-polystable of slope zero; see \cite{Kobayashi}. 
Twisting the jet sequence by $-H$ gives
\[
 0\longrightarrow\Omega_X^1\longrightarrow J(-H)
 \longrightarrow\cO_X\longrightarrow0.
\]
The induced filtration on $\operatorname{Sym}^m(J(-H))$ has successive quotients $\operatorname{Sym}^j\Omega_X^1$, for $0\le j\le m$.
Indeed, this follows locally from a splitting of the sequence, and the filtration glues independently of the splitting.
Each quotient is semistable of slope zero, so their successive extensions are semistable of slope zero.
Tensoring by $\cO_X(mH)$ proves that $\operatorname{Sym}^mJ$ is semistable of slope $md$.
The argument uses only the induced filtration and does not require the jet sequence to split.
\end{proof}

We next use Langer's section estimate \cite[Theorem~3.3]{Langer}, in the explicit form recorded in \cite[Theorem~2.5]{Sun}.
For a torsion-free sheaf $F$ of rank $\rho$ with $\mu_H^+(F)\ge0$, it gives
\begin{equation}\label{eq:Langer-sections}
 h^0(X,F)\le
 \rho d\binom{\mu_H^+(F)/d+3+f(\rho)}{3},
 \qquad f(\rho)=-1+\sum_{j=1}^{\rho}\frac1j.
\end{equation}
The binomial coefficient is interpreted as the polynomial $\binom{u}{3}=u(u-1)(u-2)/6$.
The explicit rank dependence is essential below; the harmonic correction satisfies $0\le f(\rho)\le\log\rho$.

For a Cartier divisor $D$ on a smooth projective variety $Z$ of dimension $n$, its volume is
\[
 \operatorname{vol}_Z(D)
 =\limsup_{m\to\infty}\frac{n!}{m^n}h^0\bigl(Z,\cO_Z(mD)\bigr).
\]
For a rational divisor, the same formula is taken over sufficiently divisible $m$.
Volume depends only on the numerical class, and for a nef class $D$ it equals $D^n$; see \cite[Section~2.2]{Laz}.

\begin{proposition}\label{prop:volume-upper}
For every rational $t$ with $0\le t\le1$,
\begin{equation}\label{eq:volume-upper}
 \operatorname{vol}_{\mathcal T}(\xi-t\eta)\le20(1-t)^3d.
\end{equation}
\end{proposition}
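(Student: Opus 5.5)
Write $t=a/b$ with integers $0\le a\le b$, and estimate $h^0(\mathcal T,\cO_{\mathcal T}(m(\xi-t\eta)))$ along multiples $m=kb$. Pushing forward along $p$ gives
\[
 h^0\bigl(\mathcal T,\cO_{\mathcal T}(kb\,\xi-ka\,\eta)\bigr)=h^0\bigl(X,\operatorname{Sym}^{kb}J\otimes\cO_X(-kaH)\bigr).
\]
Set $F_k=\operatorname{Sym}^{kb}J(-kaH)$. Its rank is $\rho_k=\binom{kb+3}{3}$. By Lemma~\ref{lem:jet-semistability}, $F_k$ is $\mu_H$-semistable, and its slope is
\[
 \mu_H(F_k)=kbd-kad=k(b-a)d\ge0 .
\]
Hence $\mu_H^+(F_k)=k(b-a)d$, and Langer's estimate \eqref{eq:Langer-sections} applies:
\[
 h^0(X,F_k)\le\rho_k\,d\binom{k(b-a)+3+f(\rho_k)}{3}.
\]

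Next comes the asymptotics. We have $\rho_k\sim (kb)^3/6$. Since $f(\rho)\le\log\rho=O(\log k)$, the binomial factor is
\[
 \frac{(k(b-a))^3}{6}\,(1+o(1)).
\]
This is the point where the explicit logarithmic rank dependence matters: a correction linear in the rank would ruin the bound. Multiplying out, $h^0\le d\,k^6 b^3(b-a)^3/36\,(1+o(1))$. Taking $\frac{6!}{(kb)^6}h^0$ and letting $k\to\infty$ then gives
\[
 \operatorname{vol}_{\mathcal T}(\xi-t\eta)\le 720\cdot\frac{d\,b^3(b-a)^3}{36\,b^6}=20d(1-t)^3 .
\]
This is \eqref{eq:volume-upper}. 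Since the volume is defined as a limsup over divisible $m$, and volume is homogeneous, restricting to the subsequence $m=kb$ is harmless.

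There are two boundary cases. At $t=1$ the slope is zero, and Langer's bound gives $h^0\le\rho_k d\binom{3+f}{3}=O(k^3\log^3k)$, so the volume is $0$, as required. At $t=0$ the bound reduces to $20d$; this should be compared with $\xi^6=\omega$, which is consistent by Proposition~\ref{prop:numerical}.

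The main obstacle is the bookkeeping in Langer's estimate. We must make sure that $\mu^+_H$ equals the slope, which is exactly where the semistability of all symmetric powers of $J$ enters (from Lemma~\ref{lem:jet-semistability}), and that the harmonic correction $f(\rho_k)$ grows only like $\log k$, so that it disappears in the normalized limit. The rest is a routine leading-order computation.
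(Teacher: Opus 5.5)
Your proposal is correct and follows essentially the same route as the paper. You push forward to $\operatorname{Sym}^mJ\otimes\cO_X(-mtH)$, use Lemma~\ref{lem:jet-semistability} to get $\mu_H^+=(1-t)md$, apply Langer's estimate with the logarithmic correction $f(\rho_m)$, read off the leading $m^6$ term, and treat $t=1$ separately. The only cosmetic difference is that you write the admissible $m$ as $kb$ and invoke homogeneity, whereas the paper simply lets $m$ run over integers with $mt\in\mathbb Z$.
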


\begin{proof}
Fix $t\in\mathbb Q\cap[0,1]$ and let $m\to\infty$ through positive integers with $mt\in\mathbb Z$.
Put
\[
 \mathcal L_{m,t}=\cO_{\mathcal T}(m)\otimes p^*\cO_X(-mtH),\qquad
 F_{m,t}=\operatorname{Sym}^mJ\otimes\cO_X(-mtH).
\]
Then $c_1(\mathcal L_{m,t})=m(\xi-t\eta)$, and the projection formula gives
\[
 H^0(\mathcal T,\mathcal L_{m,t})\simeq H^0(X,F_{m,t}).
\]
By Lemma~\ref{lem:jet-semistability}, $F_{m,t}$ is semistable, with
\[
 \rho_m=\operatorname{rk}F_{m,t}=\binom{m+3}{3}
       =\frac{m^3}{6}+O(m^2),\qquad
 \mu_H^+(F_{m,t})=(1-t)md.
\]
Using \eqref{eq:Langer-sections} and $f(\rho_m)=O(\log m)$, we obtain
\[
 h^0(\mathcal T,\mathcal L_{m,t})
 \le \rho_m d\binom{(1-t)m+3+f(\rho_m)}{3} =\frac{d(1-t)^3}{36}m^6+O_t(m^5\log m).
\]
The error constant may depend on the fixed $t$ and $(X,H)$, but not on $m$.
Multiplying by $6!/m^6$ and taking the limsup gives \eqref{eq:volume-upper}.
Neither nefness of $\xi-t\eta$ nor uniformity of the error term in $t$ is required.
At the endpoint $t=1$ the same estimate applies, since $\mu_H^+(F_{m,1})=0$; here it gives
$h^0(\mathcal T,\mathcal L_{m,1})=O(m^3\log^3m)$ and hence $\operatorname{vol}_{\mathcal T}(\xi-\eta)=0$.
\end{proof}

At $t=0$, nefness of $\xi$ gives
\[
 20d-6c-e=\int_{\mathcal T}\xi^6=\operatorname{vol}_{\mathcal T}(\xi)\le20d,
\]
and hence $e+6c\ge0$.
To obtain the nonlinear refinement, we use the algebraic Morse inequality: for nef rational divisor classes $A,B$ on an $n$-dimensional smooth projective variety $Z$,
\[
 \operatorname{vol}_Z(A-B)\ge A^n-nA^{n-1}B;
\]
see \cite[Proposition~3.3]{BFJ}.
The difference $A-B$ need not be nef or big.

\begin{proposition}\label{prop:jet-volume-parameter}
For every real $t\in[0,1]$,
\begin{equation}\label{eq:jet-volume-parameter}
 e+6c\ge6tc-60t^2d+20t^3d.
\end{equation}
\end{proposition}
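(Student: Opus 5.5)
Apply the algebraic Morse inequality with $A=\xi$ and $B=t\eta$ on the sixfold $\mathcal T$ (so $n=6$). Both classes are nef: $\xi$ because $J$ is globally generated, and $\eta$ because $H$ is ample. This gives
\[
 \operatorname{vol}_{\mathcal T}(\xi-t\eta)\ge\int_{\mathcal T}\xi^6-6t\int_{\mathcal T}\xi^5\eta
 =(20d-6c-e)-6t(10d-c),
\]
using the mixed intersections \eqref{eq:jet-mixed-intersections}. Combining this with the upper bound $20(1-t)^3d$ from Proposition~\ref{prop:volume-upper} gives, for rational $t\in[0,1]$,
\[
 20d-6c-e-60td+6tc\le20d(1-3t+3t^2-t^3).
\]
Expand and rearrange: the $20d$ and $-60td$ terms cancel on both sides, and what remains is $-6c-e+6tc\le60t^2d-20t^3d$. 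This is exactly $e+6c\ge6tc-60t^2d+20t^3d$.

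To pass from rational to real $t$, note that both sides of \eqref{eq:jet-volume-parameter} are polynomials in $t$, hence continuous. The inequality holds on the dense set $\mathbb Q\cap[0,1]$, so it extends to all real $t\in[0,1]$ by taking limits. This way we never need continuity of the volume function itself.

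The one point needing care is the applicability of the Morse inequality. It requires $A$ and $B$ to be nef $\mathbb Q$-classes, which they are for rational $t$, but it does not require $\xi-t\eta$ to be nef or big. If $\xi-t\eta$ is not big, its volume is $0$ and the inequality still holds as stated. So the main obstacle is just bookkeeping: checking that the cubic expansion cancels correctly. No new geometric input is needed beyond Propositions~\ref{prop:volume-upper} and the formulas \eqref{eq:jet-mixed-intersections}.
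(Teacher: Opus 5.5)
Your proof is correct and follows the paper's argument step for step. You apply the Morse inequality to $\xi$ and $t\eta$, insert the mixed intersection numbers, compare with the upper bound of Proposition~\ref{prop:volume-upper} for rational $t$, and extend to real $t$ by continuity of the resulting polynomial inequality.
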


\begin{proof}
For rational $0\le t\le1$, the Morse inequality applied to $\xi$ and $t\eta$ gives
\[
 \operatorname{vol}_{\mathcal T}(\xi-t\eta)
\ge \int_{\mathcal T} (\xi^6-6t\xi^5\eta).
\]
The intersection identities \eqref{eq:jet-mixed-intersections} and Proposition~\ref{prop:volume-upper} imply
\[
 20d-(e+6c)-60td+6tc
 \le20d-60td+60t^2d-20t^3d.
\]
This is \eqref{eq:jet-volume-parameter} for the indicated rational $t$, including the endpoint $t=1$, and no nefness of $\xi-t\eta$ is assumed.
Continuity of this numerical inequality extends it to all real $t\in[0,1]$.
\end{proof}

\begin{proof}[Proof of Theorem~\ref{thm:jet-volume}]
To optimize \eqref{eq:jet-volume-parameter}, set
\[
 F(t)=6ct-60dt^2+20dt^3.
\]
On $[0,1]$ this function is concave, since $F''(t)=120d(t-1)\le0$.
Its maximum is attained at
\[
 t_*=1-s,
\]
where $s$ is defined in \eqref{eq:jet-volume-correction}, since
$F'(t)=60d(t^2-2t+c/(10d))$.
Substitution gives
\[
 \max_{0\le t\le1}F(t)=20d(1-s)^2(1+2s)=\mathcal Q(d,c).
\]
Thus \eqref{eq:jet-volume-gap} holds; the endpoint $c=10d$, where $t_*=1$, is included in Proposition~\ref{prop:jet-volume-parameter}.
Using $c=10d(1-s^2)$, one has
\[
 -6c+\mathcal Q(d,c)=40d(s^3-1),
\]
which proves the equivalent form \eqref{eq:jet-volume-Chern}.

If $c>0$, then $s<1$ and $\mathcal Q(d,c)>0$, so $e+6c>0$. 
Suppose now that $c=0$.
By Yau's theorem \cite{Yau}, the K\"ahler class $H$ contains a Ricci-flat K\"ahler metric $\omega$.
For this metric, the Chern--Weil formula gives $c=\int_X c_2(X)\wedge \omega=C \int_X |\mathrm{Rm}(\omega)|^2 \omega^3$ for some $C>0$. 
Thus $c=0$ implies that $\omega$ is flat.
By the Bieberbach theorem \cite{Wolf}, $X$ admits a finite \'{e}tale cover $\varpi:A\to X$ by a complex torus.
Since $\varpi^*H$ is ample, $A$ is an abelian threefold.
Euler characteristic is multiplicative under finite \'{e}tale covers, hence $0=e(A)=(\deg\varpi)e(X)$, and therefore $e=0$.

Conversely, if there exists a finite \'{e}tale cover $\varpi:A\to X$ by an abelian threefold, then $\varpi^*T_X\simeq T_A$ is trivial.
Since pullback is injective on rational cohomology for a finite \'{e}tale map, the rational Chern classes of $T_X$ vanish, and hence $c=e=0$.
This proves the equivalence of the three conditions.

An abelian threefold has $c=e=0$, so equality holds in \eqref{eq:jet-volume-Chern}.
For a quintic threefold, $(d,c,e)=(5,50,-200)$, so $s=0$ and both sides of \eqref{eq:jet-volume-Chern} equal $-200$.
\end{proof}

\begin{remark}\label{rem:type-A-equality}
Under the additional hypothesis $H^1(X,\cO_X)=0$, the equality case $e+6c=0$ in Theorem~\ref{thm:jet-volume} is precisely the type A case classified in \cite{OS, HK}.
\end{remark}

For the comparison below and the remainder of this section, we return to the Calabi--Yau hypotheses and complete-embedding convention stated at the beginning of the section.

\begin{remark}\label{rem:volume-vs-mixed}
Theorems~\ref{thm:jet-volume} and~\ref{thm:logconcave} give
\begin{equation}\label{eq:combined-lower}
 e\ge\max\left\{-6c+\mathcal Q(d,c),\,
                 -5d-c-\frac{c^2}{4d}\right\}.
\end{equation}
With $s=\sqrt{1-c/(10d)}\in[0,1]$, the difference between the volume lower bound and the mixed-intersection lower bound is
\[
 40d(s^3-1)-\left(-5d-c-\frac{c^2}{4d}\right)
 =5ds^2(5s^2+8s-12).
\]
Consequently, the internal transition occurs at
\[
 \frac cd=\beta:=\frac{32\sqrt{19}-134}{5}\simeq1.096953.
\]
The volume bound is strictly stronger for $0\le c/d<\beta$, and the mixed-intersection bound is strictly stronger for $\beta<c/d<10$.
The two right-hand sides agree at $c/d=\beta$ and $c/d=10$; the latter endpoint is attained by the quintic.
Thus the bounds remain complementary, but the volume estimate also detects both the flat endpoint and the quintic endpoint.

As $c/d\to0$ one has $\mathcal Q(d,c)=\frac{3c^2}{20d}+\frac{c^3}{400d^2}+O(c^4/d^3)$, which quantifies the gain of Theorem~\ref{thm:jet-volume} over the linear bound $e+6c\ge0$.
\end{remark}

\begin{remark}\label{rem:dual-logconcave}
Write $\delta^\vee=\deg X^\vee=4d+2c-e$ and $\omega=20d-6c-e$.
Inequality \eqref{eq:logconcave} is equivalent to either of
\[
 4d\,\omega\le(10d-c)^2,\qquad
 4d\,\delta^\vee\le(6d+c)^2.
\]
Combining these with Theorem~\ref{thm:jet-volume} gives
\begin{equation}\label{eq:combined-degrees}
 \begin{aligned}
 \omega&\le\min\left\{\frac{(10d-c)^2}{4d},\,
                         20d-\mathcal Q(d,c)\right\},\\
 \deg X^\vee&\le\min\left\{\frac{(6d+c)^2}{4d},\,
                             4d+8c-\mathcal Q(d,c)\right\}.
 \end{aligned}
\end{equation}
Here $20d-\mathcal Q(d,c)=20ds^2(3-2s)$.
In particular, $\omega\le20d$ and $\deg X^\vee\le4d+8c$, with strict inequalities whenever $c>0$.
For $N\ge6$, the first line bounds $\tdeg(X)\deg\Tan(X)$.

The boundary value $c=10d$ characterizes the quintic threefold: by \eqref{eq:RR} it means $d=N+1$, whereas \eqref{eq:degree-window} below gives $d\ge3N-7$, so that $N\le4$ and $(N,d)=(4,5)$.
Hence equality in \eqref{eq:c2bound} tested against $H$, equality in Theorem~\ref{thm:logconcave} for $N\le5$, where $\omega=0$, and the endpoint $s=0$ in Theorem~\ref{thm:jet-volume} occur only for the quintic threefold.
\end{remark}


\subsection{Secant and hyperplane-section estimates}\label{subsec:geometric-bounds}

A complementary lower bound comes from apparent double points.
In the stable range, the degree of the secant variety makes this bound strict.

\begin{proposition}\label{prop:double-point}
Let $(X,H)$ be a very amply polarized Calabi--Yau threefold, embedded by $|H|$ in $\PP^N$.
Then
\begin{equation}\label{eq:double-point-lower}
 e\ge-d^2+35d-7c.
\end{equation}
If $N\ge7$, then
\begin{equation}\label{eq:secant-lower}
 e\ge-d^2+35d-7c+(N-5)(N-6)+2.
\end{equation}
\end{proposition}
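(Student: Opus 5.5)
We project $X\subset\PP^N$ from a general linear subspace to $\PP^6$ and apply the double point formula there. For $N\ge7$, let $\pi\colon X\to\PP^6$ be the projection from a general $\PP^{N-7}$. Since $\dim\Sec(X)\le7$, this is a morphism that is generically an embedding. Its double-point class has degree
\[
 \mathbb D=\int_{\PP^6}[\pi(X)]^2-\int_X c_3(N_\pi)=d^2-\int_X c_3(N_\pi),
\]
where $N_\pi=\pi^*T_{\PP^6}-T_X$ is the virtual normal bundle. It has $c(N_\pi)=(1+H)^7c(T_X)^{-1}$. The degree-three part is computed exactly as in the proof of Theorem~\ref{thm:P6exact}, using $c_1(X)=0$, and gives $35d-7c-e$. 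Hence
\[
 2\,\#\{\text{apparent double points}\}=\mathbb D=d^2-35d+7c+e .
\]
Since $\mathbb D$ counts pairs of points of $X$ identified by $\pi$, it is nonnegative. This nonnegativity is precisely \eqref{eq:double-point-lower}.

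For $N\le6$ we argue separately. When $N=6$, Theorem~\ref{thm:P6exact} gives equality $e=-d^2+35d-7c$, because $\mathbb D=0$. When $N=4,5$, $X$ is one of $X_5$, $X_{2,4}$, $X_{3,3}$. Here we either check the inequality directly from the values of $(d,c,e)$, or use the formula with a general projection to $\PP^6$ replaced by a re-embedding, since it is a numerical inequality. A direct check will do: for $X_5$, $-25+175-350=-200\le e$, and similar checks handle the other two.

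For \eqref{eq:secant-lower}, we need to count the apparent double points. These are the secant lines of $X$ meeting the general centre $\PP^{N-7}$. Since $\dim\Sec(X)=7$ by Theorem~\ref{thm:stable}, their number is $\deg\Sec(X)$ times the number of secant lines through a general point of $\Sec(X)$. The latter is at least $1$. Hence $\mathbb D\ge2\deg\Sec(X)$.

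It then suffices to show $\deg\Sec(X)\ge\binom{N-5}{2}+1$. This gives $\mathbb D\ge(N-5)(N-6)+2$, which is \eqref{eq:secant-lower}. We obtain the lower bound $\deg\Sec(X)\ge\binom{N-5}{2}$ from the classical bound for a nondegenerate $7$-dimensional variety of codimension $N-7$, namely $\deg\ge N-7+1$. That bound alone is too weak. So we instead use the bound for secant varieties of nondefective threefolds from \cite{CR} or \cite{HGR}, namely $\deg\Sec(X)\ge\binom{N-5}{2}$. To make the inequality strict, we rule out the equality case exactly as in Lemma~\ref{lem:minimal-tangent-rational}: the extremal case forces $X$ to be rational, while $X$ is not rational because $h^{3,0}=1$.

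The main obstacle is this step, securing the sharp secant-degree bound together with its rational equality case. If the literature only gives the bound with equality characterised by rationality, the extra $+2$ then follows from parity, since $\mathbb D$ is even and $e,c$ are even.
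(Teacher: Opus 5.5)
\textbf{Gap at $N=7$ in the proof of \eqref{eq:secant-lower}.} Your proof of \eqref{eq:double-point-lower} is essentially the paper's: a general projection to $\PP^6$, the double-point formula, and $c_3(N_f)=35H^3-7c_2(X)H-c_3(X)$. The paper handles $N\le6$ uniformly by composing with a linear inclusion into $\PP^6$ with $\sigma=0$, which is equivalent to your case check. You should add that a general centre misses $\Tan(X)$ and meets $\Sec(X)$ in finitely many points. Only then is the double-point class twice a count of transverse double points; ``generically an embedding'' alone does not give this.

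The failing step comes next. You write $\mathbb D\ge2\deg\Sec(X)$, which discards the number $\mu$ of secant lines through a general point of $\Sec(X)$. You then aim for $\deg\Sec(X)\ge\binom{N-5}{2}+1$. For $N=7$, Theorem~\ref{thm:stable} gives $\Sec(X)=\PP^7$, so $\deg\Sec(X)=1=\binom{2}{2}$ for every such $X$. The inequality you want is therefore false. The ``equality case'' you plan to exclude by rationality is the general situation, so no rationality argument can rule it out. Parity does not help either: you only get $\mathbb D\ge2$, whereas \eqref{eq:secant-lower} at $N=7$ says $\mathbb D\ge4$, i.e.\ at least two apparent double points.

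The missing idea is to bound $\sigma=\mu\deg\Sec(X)$ itself, not $\deg\Sec(X)$. For $N=7$ this is $\sigma=\mu$. If $\sigma=1$, then $X$ is a threefold with one apparent double point, hence rational by \cite[Corollary~4.5(iii)]{CR}, contradicting $h^{3,0}(X)=1$.

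For $N\ge8$, the paper uses \cite[Theorem~4.2]{CR} to get $\sigma\ge\deg\Sec(X)\ge\binom{N-5}{2}$. It then excludes equality, i.e.\ a minimal number of apparent double points, by the same corollary. If $\mu\ge2$, then $\sigma\ge2\binom{N-5}{2}>\binom{N-5}{2}+1$ already, so only $\mu=1$ needs the rationality input. You do not need the stronger claim that every variety of minimal secant degree is rational.

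Finally, the rationality step is not ``exactly as in Lemma~\ref{lem:minimal-tangent-rational}''. That lemma concerns the tangent-variety bound $\deg\Tan(X)\ge2(N-5)$ from \cite{HGR}. What you need here is the equality characterization for secant varieties and apparent double points in \cite{CR}.
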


\begin{proof}
For $N\ge7$, let $f:X\to\PP^6$ be a general linear projection.
Its centre is disjoint from $\Tan(X)$, so the differential is injective.
By the generic projection theorem, its image has only isolated double points, each consisting of two smooth branches meeting transversely.
Let $\sigma$ be the number of these points, equivalently the number of unordered pairs of distinct preimages.
For $N\le6$, compose the given embedding with a linear inclusion into $\PP^6$ and put $\sigma=0$.
In either case the normal bundle $N_f$ is defined by $0\rightarrow T_X\xrightarrow{df}f^*T_{\PP^6}  \rightarrow N_f \rightarrow0$.
The double-point formula \cite[Theorem~2.3]{CO} gives
\[
 d^2=\int_X c_3(N_f)+2\sigma.
\]
Each double point contributes twice because its two preimages can be ordered in two ways.
Since $f^*\cO_{\PP^6}(1)\simeq\cO_X(H)$,
\[
 c(N_f)=\frac{(1+H)^7}{c(T_X)},\qquad
 c_3(N_f)=35H^3-7c_2(X)\cdot H-c_3(X).
\]
Thus $2\sigma=d^2-35d+7c+e$, and nonnegativity of $\sigma$ proves \eqref{eq:double-point-lower}.

Suppose that $N\ge7$.
By Theorem~\ref{thm:stable}, $\dim\Sec(X)=7$.
Write $\mu$ for the number of secant lines through a general point of $\Sec(X)$, so that $\sigma=\mu\deg\Sec(X)$.
If $N\ge8$, the minimal secant-degree bound \cite[Theorem~4.2]{CR} gives
\[
 \sigma\ge\deg\Sec(X)\ge\binom{N-5}{2}.
\]
For $N=7$ the same inequality holds because $\Sec(X)=\PP^7$ and $\mu\ge1$.
Equality would mean that $X$ has the minimal number of apparent double points; for $N=7$, it would have one apparent double point.
In either case \cite[Corollary~4.5(iii)]{CR} would make $X$ rational, contrary to $h^{3,0}(X)=1$.
Thus $\sigma\ge\binom{N-5}{2}+1$, proving \eqref{eq:secant-lower}.
\end{proof}

The first bound \eqref{eq:double-point-lower} is attained precisely when $N\le6$.
In terms of the dual degree, the proof gives the identity $\deg X^\vee=d^2-31d+9c-2\sigma$, so the corresponding upper bound is also attained in every low-codimension and critical case.
For $N\ge7$, equality in \eqref{eq:secant-lower} is equivalent to $\sigma=\binom{N-5}{2}+1$; the argument does not determine whether this value is attained by a Calabi--Yau threefold.

We next pass to a smooth hyperplane section $i:S\hookrightarrow X$ and set $p_g(S)=h^0(S,\cO_S(K_S))$.
Adjunction and the sequence $0\to\cO_X\to\cO_X(H)\to i_*\cO_S(K_S)\to0$, together with $H^1(X,\cO_X)=0$, identify its canonical embedding with the hyperplane section of $X\subset\PP^N$ and give $p_g(S)=N$.
Weak Lefschetz gives $h^1(S,\cO_S)=0$, while the tangent sequence gives $\int_Sc_2(S)=d+c$.
The Castelnuovo estimate $K_S^2\ge3p_g(S)-7$, combined with \eqref{eq:RR}, therefore yields
\begin{equation}\label{eq:Castelnuovo}
 c\le2d+40.
\end{equation}
See \cite[Proposition~2.2(2)]{KW}.
The right-hand side of \eqref{eq:logconcave} decreases as $c\ge0$ increases.
Substituting \eqref{eq:Castelnuovo} therefore gives
\begin{equation}\label{eq:lower-logconcave-Castelnuovo}
 e\ge-8d-80-\frac{400}{d}.
\end{equation}
Likewise, substituting \eqref{eq:Castelnuovo} into \eqref{eq:double-point-lower} gives
\begin{equation}\label{eq:lower-double-point-Castelnuovo}
 e\ge-d^2+21d-280.
\end{equation}
The difference between these two lower bounds, after division by $2$, is
\[
 \frac{-d^2+21d-280}{2}
 -\left(-4d-40-\frac{200}{d}\right)
 =-\frac{(d-4)(d-5)(d-20)}{2d}.
\]
Thus the double-point bound is stronger before rounding for $5<d<20$, whereas the bound derived from \eqref{eq:logconcave} is stronger for $d>20$.
Both hold without an additional hypothesis.
To bound the Hodge-number difference from above, we first control $h^{1,1}(X)$ using a hyperplane section.
A vanishing cycle makes the weak Lefschetz inequality strict.

\begin{proposition}\label{prop:lefschetz-Hodge}
Let $(X,H)$ be a very amply polarized Calabi--Yau threefold, embedded by $|H|$ in $\PP^N$.
Then
\begin{equation}\label{eq:lefschetz-h11}
 h^{1,1}(X)\le10(N+1)-d-1=\frac{4d+5c}{6}-1.
\end{equation}
Consequently,
\begin{equation}\label{eq:lefschetz-Hodge}
 h^{1,1}(X)-h^{2,1}(X)
 \le\left\lfloor\frac{7d+97}{3}\right\rfloor.
\end{equation}
\end{proposition}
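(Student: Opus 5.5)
The plan is to compare $H^2(X)$ with $H^2(S)$ for a smooth hyperplane section $i:S\hookrightarrow X$, using the invariants of $S$ recorded just before the statement: $p_g(S)=N$, $h^1(S,\cO_S)=0$ and $\int_S c_2(S)=d+c$. First I compute $h^{1,1}(S)$. Since $b_1(S)=0$, we have $b_2(S)=e(S)-2=d+c-2$. Hence
\[
 h^{1,1}(S)=b_2(S)-2p_g(S)=d+c-2-2N .
\]
Substituting $c=12(N+1)-2d$ from \eqref{eq:RR} gives $h^{1,1}(S)=10(N+1)-d$. Eliminating $N$ instead gives $10(N+1)-d=\tfrac{4d+5c}{6}$, which is the second expression in \eqref{eq:lefschetz-h11}. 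So \eqref{eq:lefschetz-h11} reduces to the strict inequality $h^{1,1}(X)<h^{1,1}(S)$.

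For the weak comparison I use the Lefschetz hyperplane theorem: $i^*:H^2(X,\mathbb Q)\to H^2(S,\mathbb Q)$ is injective and a morphism of Hodge structures. Since $h^{2,0}(X)=0$, the image $I=i^*H^2(X)$ is entirely of type $(1,1)$, so $h^{1,1}(X)\le h^{1,1}(S)$.

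Strictness is the main step, and it uses a Lefschetz pencil in $|H|$. By the global invariant cycle theorem together with hard Lefschetz, $I$ equals the monodromy invariants. Its orthogonal complement with respect to the cup product is the space $V\subset H^2(S,\mathbb Q)$ spanned by the vanishing cycles, and $H^2(S,\mathbb Q)=I\oplus V$ is a decomposition into rational Hodge substructures. The pencil has singular members because $X^\vee$ is a hypersurface (Theorem~\ref{thm:dual-hypersurface}), so $V\neq0$; indeed $V$ contains a rational vanishing class $\delta$ with $\delta^2=-2$. Now suppose $h^{1,1}(X)=h^{1,1}(S)$. Then $I\otimes\CC=H^{1,1}(S)$, so $V\otimes\CC\subset H^{2,0}(S)\oplus H^{0,2}(S)$. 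In particular the real class $\delta$ would lie in $\bigl(H^{2,0}\oplus H^{0,2}\bigr)_{\mathbb R}$. By the Hodge--Riemann bilinear relations the intersection form is positive definite there, which forces $\delta^2>0$, a contradiction. Hence $h^{1,1}(X)\le h^{1,1}(S)-1$, which proves \eqref{eq:lefschetz-h11}.

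For \eqref{eq:lefschetz-Hodge}, I use $h^{2,1}(X)\ge0$, which gives $h^{1,1}(X)-h^{2,1}(X)\le\frac{4d+5c}{6}-1$. The right-hand side is increasing in $c$, so the Castelnuovo bound \eqref{eq:Castelnuovo} $c\le2d+40$ yields
\[
 h^{1,1}(X)-h^{2,1}(X)\le\frac{14d+200}{6}-1=\frac{7d+97}{3}.
\]
Since the left-hand side is an integer, we may take the floor.
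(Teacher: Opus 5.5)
Your proposal is correct and follows essentially the paper's route: weak Lefschetz, the computation $h^{1,1}(S)=10(N+1)-d$, a vanishing class $\delta$ with $\delta^2=-2$ orthogonal to $i^*H^2(X)$ coming from a Lefschetz pencil (whose discriminant is nonempty because $X^\vee$ is a hypersurface), and finally the Castelnuovo bound $c\le 2d+40$. The differences are cosmetic: the paper computes the signature $(2p_g(S),\,h^{1,1}(S)-h^{1,1}(X))$ of $W^\perp$ and gets orthogonality of $\delta$ directly from the vanishing sphere bounding a $3$-chain in $X$, whereas you argue by contradiction using positivity of the form on $(H^{2,0}\oplus H^{0,2})_{\mathbb R}$ and invoke the invariant-cycle and hard Lefschetz decomposition, which is more than the argument requires.
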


\begin{proof}
Let $i:S\hookrightarrow X$ be a smooth hyperplane section as above.
Weak Lefschetz gives an injection $i^*:H^2(X,\mathbb R)\hookrightarrow H^2(S,\mathbb R)$ compatible with Hodge decomposition.
Its image $W$ is of type $(1,1)$, since $h^{2,0}(X)=0$, and contains the ample class $H|_S$.
The Hodge index theorem therefore gives signature $(1,h^{1,1}(X)-1)$ on $W$.
As the intersection form on $H^2(S,\mathbb R)$ has signature $(1+2p_g(S),h^{1,1}(S)-1)$, its restriction to $W^\perp$ has signature
\[
 \bigl(2p_g(S),\,h^{1,1}(S)-h^{1,1}(X)\bigr).
\]
By Theorem~\ref{thm:dual-hypersurface}, $X^\vee$ is a hypersurface.
A general pencil in $|H|$ is therefore a Lefschetz pencil with a nonempty set of singular members, each with one ordinary double point.
Its vanishing sphere determines, after parallel transport to $S$ and Poincar\'e duality, a class $\delta\in W^\perp$ with $\delta^2=-2$; see \cite[Chapters~2--3]{Vois}.
Orthogonality holds because the sphere bounds a $3$-chain in $X$, so its pushforward to $H_2(X,\mathbb R)$ is $0$.
The negative index of $W^\perp$ is thus at least $1$, giving
\[
 h^{1,1}(X)\le h^{1,1}(S)-1.
\]

The surface invariants computed above give
\[
 h^{1,1}(S)=d+c-2-2N=10(N+1)-d,
\]
where we use $2d+c=12(N+1)$ from \eqref{eq:RR}.
This proves \eqref{eq:lefschetz-h11}.
Since $h^{2,1}(X)\ge0$, substituting $c\le2d+40$ yields
\[
 h^{1,1}(X)-h^{2,1}(X)\le\frac{4d+5c}{6}-1
 \le\frac{7d+97}{3}.
\]
Integrality gives \eqref{eq:lefschetz-Hodge}.
\end{proof}

Equality in \eqref{eq:lefschetz-h11} holds if and only if the vanishing cohomology $W^\perp$ has Hodge numbers $(N,1,N)$ in types $(2,0)$, $(1,1)$, and $(0,2)$.
Equality in \eqref{eq:lefschetz-Hodge} is equivalent to
\[
 c=2d+40,\qquad h^{2,1}(X)=0,\qquad h^{1,1}(X)=10(N+1)-d-1.
\]
Indeed, $c$ is even by \eqref{eq:RR}, and if $c<2d+40$, the two unrounded upper bounds in the last display of the proof differ by at least $5/3$, so equality is impossible even after rounding.
These conditions force $N\ge7$, since the cases $N\le6$ have $h^{1,1}(X)-h^{2,1}(X)\le6$, whereas the right-hand side of \eqref{eq:lefschetz-Hodge} is at least $44$.
Writing $k=N+1$, they give $k=(d+10)/3$; Theorem~\ref{thm:stable} then implies
\[
 \frac{7d+97}{3}\le16d-37k+5=\frac{11d-355}{3}.
\]
Thus equality in \eqref{eq:lefschetz-Hodge} requires $d\ge113$ and $N\ge40$.
These arguments do not establish whether either bound of Proposition~\ref{prop:lefschetz-Hodge} is attained.


\subsection{Uniform Hodge bounds}

Combining the hyperplane-section estimate with the tangent-variety bound eliminates the embedding dimension and yields uniform bounds for the Hodge-number difference.

\begin{theorem}\label{thm:Hodge}
Let $(X,H)$ be a very amply polarized Calabi--Yau threefold, with complete embedding $X\hookrightarrow\PP^N$ defined by $|H|$, and put $\Delta=h^{1,1}(X)-h^{2,1}(X)$.
Then
\begin{equation}\label{eq:uniform-Hodge-nonlinear}
 \begin{gathered}
 \Delta\ge\max\left\{
       \left\lceil-4d-40-\frac{200}{d}\right\rceil,\,
       \frac{-d^2+21d-280}{2}\right\},\\
 \Delta\le\min\left\{
       \left\lfloor\frac{173d}{66}\right\rfloor,\,
       \left\lfloor\frac{123d+13}{47}\right\rfloor,\,
       \left\lfloor\frac{7d+97}{3}\right\rfloor\right\}.
 \end{gathered}
\end{equation}
In particular,
\begin{equation}\label{eq:uniform-Hodge-linear}
 -4d-80\le h^{1,1}(X)-h^{2,1}(X)\le\frac{173}{66}d.
\end{equation}
Equality in the lower bound of \eqref{eq:uniform-Hodge-linear} holds if and only if $(X,H)$ is a quintic threefold $X_5 \subset \PP^4$.
Equality in its upper bound forces $N=23$, $d=66$, and $(h^{1,1}(X),h^{2,1}(X))=(173,0)$.
\end{theorem}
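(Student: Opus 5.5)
The plan is to assemble the statement from inequalities already proved, writing $k=N+1$ and trading $c$ for $k$ via \eqref{eq:RR}, $c=12k-2d$.

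\textbf{Lower bounds.} Divide \eqref{eq:lower-logconcave-Castelnuovo} and \eqref{eq:lower-double-point-Castelnuovo} by $2$, using $\Delta=e/2$. This gives $\Delta\ge-4d-40-200/d$ and $\Delta\ge(-d^2+21d-280)/2$. Since $\Delta$ is an integer, the first may be rounded up, which yields the first line of \eqref{eq:uniform-Hodge-nonlinear}.

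For \eqref{eq:uniform-Hodge-linear}, note that $-4d-40-200/d\ge-4d-80$ exactly when $d\ge5$. Equality $\lceil-4d-40-200/d\rceil=-4d-80$ forces $200/d\ge40$, that is $d\le5$. Among very amply polarized Calabi--Yau threefolds $d\ge5$, because $N\ge4$ and $X$ is nonlinear; the low-codimension classification then leaves only $X_5\subset\PP^4$. Conversely, $X_5$ has $\Delta=-100=-4\cdot5-80$.

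\textbf{Upper bounds.} The bound $\lfloor(7d+97)/3\rfloor$ is \eqref{eq:lefschetz-Hodge}. For $N\le6$ the explicit values give $\Delta\le6$ (and, for $N=6$, Remark~\ref{rem:P6class}). These are far below the other two bounds once $d\ge5$, so I may assume $N\ge7$.

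In that range, \eqref{eq:stable-chern} together with $c=12k-2d$ gives
\[
\Delta\le10d-3c-(N-5)-1=16d-37k+5,
\]
while \eqref{eq:lefschetz-h11} and $h^{2,1}\ge0$ give $\Delta\le10k-d-1$. Taking the combination with weights $10/47$ and $37/47$ eliminates $k$ and yields $\Delta\le(123d+13)/47$, which is the middle bound after integer rounding.

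\textbf{The bound $173d/66$.} This is the delicate part. It is weaker than $(123d+13)/47$ precisely for $d\ge66$, because $13d\ge858$ exactly when $d\ge66$. So only $d<66$ remains. There neither linear bound suffices by itself: $(7d+97)/3\le173d/66$ needs $d\ge113$. I would therefore use the integrality of $k$. For fixed $d$,
\[
\Delta\le\max_{k\in\mathbb Z}\min\{10k-d-1,\;16d-37k+5\},
\]
and the maximum sits at $k=\lfloor(17d+6)/47\rfloor$ or that value plus one. I would further restrict $k$ by $c\ge0$, $c\le2d+40$, and $d\ge3N-7$. Then I would check, by a finite case analysis over $5\le d<66$ and residues of $17d+6$ modulo $47$, that the resulting integer bound never exceeds $173d/66$. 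I expect this finite verification, organized so that it is not a brute table, to be the main obstacle; if it fails for a few small $d$, I would fall back on \eqref{eq:lefschetz-Hodge} for those values.

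\textbf{Equality in the upper bound.} Equality in the upper bound of \eqref{eq:uniform-Hodge-linear} requires $173d/66\le(123d+13)/47$, hence $d\le66$, and also $66\mid d$ for integrality, as $\gcd(173,66)=1$. Thus $d=66$. Both inequalities $10k-67\ge173$ and $16\cdot66-37k+5\ge173$ must then hold, which forces $k=24$, so $N=23$. Equality in \eqref{eq:lefschetz-h11} gives $h^{1,1}=173$, and with $\Delta=173$ this forces $h^{2,1}=0$.
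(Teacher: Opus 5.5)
Your outline follows the paper's route: the two Castelnuovo-substituted lower bounds, the Lefschetz bound $\Delta\le10k-d-1$, the stable-range bound $\Delta\le16d-37k+5$, and elimination of $k$. The genuine gap is the one step that is not formal, namely $66\Delta\le173d$ for $d<66$. You only promise this as a finite case check that you have not carried out, and your fallback cannot work. As you note yourself, $(7d+97)/3>173d/66$ for every $d<113$, so \eqref{eq:lefschetz-Hodge} never rescues a value $d<66$.

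The missing idea is the slack-variable identity of Lemma~\ref{lem:integer-upper}. Put $A=10k-d-1-\Delta\ge0$ and $B=16d-37k+5-\Delta\ge0$. Then
\[
 47(66\Delta-173d)=858-13d-2442A-660B ,
\]
so a violation forces $A=0$ and $B\in\{0,1\}$, that is, $17d=47k-6$ or $17d=47k-5$. These give $k\equiv7$ or $k\equiv3\pmod{17}$. With $k\ge8$ this yields $k\ge24,\ d\ge66$ or $k\ge20,\ d\ge55$, and in both cases the right-hand side is nonpositive. This settles every $d$ at once, with no table.

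The hypothesis $k\ge8$, i.e.\ $N\ge7$, must be imposed explicitly in any such check. The triple $(k,d,\Delta)=(7,19,50)$ satisfies both linear inequalities with equality, yet $66\cdot50=3300>3287=173\cdot19$. Your auxiliary constraints $c\ge0$, $c\le2d+40$ and $d\ge3N-7$ do not exclude it. Indeed, your formula for the optimal $k$ returns exactly $k\in\{7,8\}$ at $d=19$.

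There are two smaller points. First, your justification of $d\ge5$ ("$N\ge4$ and $X$ nonlinear") is not valid, since nonlinear threefolds of degree $2,3,4$ exist in $\PP^4$. What you need is \eqref{eq:RR} together with $c\le10d$, which gives $d\ge N+1\ge5$. When $d=5$ it also forces $N=4$, and only then may you invoke the classification to get $X_5$. Second, your equality argument for the upper bound is correct. It uses $66\mid d$ from $\gcd(173,66)=1$, then $d\le66$ from comparison with $(123d+13)/47$, then $k=24$ from the two linear inequalities, then $h^{1,1}=173$ from \eqref{eq:lefschetz-h11}. This is a clean, self-contained alternative to reading off the equality case of the lemma.
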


The arithmetic step in the upper bound is isolated in the following lemma.
For the lemma, let $k\ge8$, $d>0$, and $\Delta$ be integers satisfying $\Delta\le10k-d-1$ and $\Delta\le16d-37k+5$.

\begin{lemma}\label{lem:integer-upper}
One has $66\Delta\le173d$, with equality if and only if $(k,d,\Delta)=(24,66,173)$.
\end{lemma}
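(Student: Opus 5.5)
Eliminating $k$ between the two hypotheses is a linear-programming step followed by an integrality check. Multiply the first inequality by $37$ and the second by $10$, then add, so that $k$ cancels:
\[
 47\Delta\le 37(10k-d-1)+10(16d-37k+5)=123d+13 .
\]
This is already the middle bound in \eqref{eq:uniform-Hodge-nonlinear}. It is not yet $66\Delta\le173d$, because the constant $13$ matters when $d$ is small relative to the slopes. So the plan is to combine the real relaxation with a careful use of integrality of $k$ and $\Delta$.

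The two constraints meet where $10k-d-1=16d-37k+5$, i.e.\ $47k=17d+6$. There the common value is $\Delta=(123d+13)/47$. The ratio $173/66$ should come from the integral lattice points near this line.

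Concretely, I would fix $d$ and write $\Delta\le\min\{10k-d-1,\,16d-37k+5\}$ as a function of the integer $k$. The first term increases in $k$ and the second decreases, so the maximum over integers occurs at $k_0=\lfloor (17d+6)/47\rfloor$ or at $k_0+1$. Write $17d+6=47k_0+r$ with $0\le r\le46$. Substituting gives two candidate values:
\[
 \Delta_{\max}(d)=\max\{10k_0-d-1,\ 16d-37(k_0+1)+5\}.
\]
I then want to show $66\Delta_{\max}(d)\le173d$ for all $d$ with $k_0\ge7$ or so (respecting $k\ge8$). This amounts to a finite check over the residue $r$ together with a linear inequality in $d$.

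Parametrize $d$ by its residue class modulo $47$, since $17$ is invertible mod $47$. Then $r$ depends only on $d \bmod 47$. Each bound $66(10k_0-d-1)\le173d$ and $66(16d-37k_0-32)\le173d$ becomes a linear inequality in $d$ with a residue-dependent constant. The leading coefficients cancel up to the slack $173\cdot47-66\cdot123=8131-8118=13>0$. So each inequality has the form $13d\ge(\text{constant depending on }r)$, equivalently $13d\ge 660r-858$ or similar. It holds automatically for large $d$, and small $d$ are handled by the constraint $k\ge8$.

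Equality $66\Delta=173d$ forces $66\mid d$, so $d=66j$ and $\Delta=173j$. The tightness analysis should then pin down $j=1$, $k=24$: check that $10\cdot24-66-1=173=16\cdot66-37\cdot24+5$. For $j\ge2$ the slack $13d$ grows and makes the inequality strict.

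I expect the main obstacle to be organizing this residue bookkeeping cleanly rather than case by case. I would try first to phrase it as a single inequality: write $\Delta\le10k-d-1$ and $\Delta\le16d-37k+5$, and take a combination $\alpha(10k-d-1-\Delta)+\beta(16d-37k+5-\Delta)\ge0$ with integer weights adapted to the lattice point $(24,66,173)$. For example, use the facet through $(24,66,173)$ and the adjacent lattice vertex, reducing the problem to showing that no integer point lies strictly above the plane $66\Delta=173d$ within the cone.
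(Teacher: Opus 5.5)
Your plan works, and it is the paper's argument in different bookkeeping. The paper sets $A=10k-d-1-\Delta\ge0$ and $B=16d-37k+5-\Delta\ge0$ and uses the identity
\[
 47(66\Delta-173d)=858-13d-2442A-660B .
\]
This identity holds at every integer point, so no maximization over $k$ is needed, and only $A=0$, $B\in\{0,1\}$ need checking. Your residue $r$ (with $17d+6=47k_0+r$) is exactly the paper's $B$ at $k=k_0$, where $A=0$. At $k=k_0+1$ one has $B=0$ and $A=47-r$. The final paragraph of your proposal is essentially this slack-variable identity.

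However, the reduction you display has the wrong sign. For the candidate $\Delta=10k_0-d-1$ the correct computation is
\[
 47\bigl(66\Delta-173d\bigr)=858-13d-660r ,
\]
so the condition is $13d\ge 858-660r$, not $13d\ge 660r-858$. With your sign the argument would break. For example, $d=30$ gives $k_0=10\ge8$ and $r=46$, and your inequality would demand $d\ge 2270$, which $k\ge8$ cannot supply.

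With the correct sign, every $r\ge2$ is automatic and only $r=0,1$ remain:
\begin{itemize}
 \item $r=0$ forces $k_0\equiv7\pmod{17}$, so $k_0\ge8$ gives $k_0\ge24$ and $d\ge66$;
 \item $r=1$ forces $k_0\equiv3\pmod{17}$, so $k_0\ge20$ and $d\ge55$.
\end{itemize}
Hence equality occurs only for $r=0$, $d=66$, $k=24$. The other candidate, $\Delta=16d-37k_0-32$, gives $47(66\Delta-173d)=2442r-113916-13d<0$ for all $r\le46$, so it never matters.

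Finally, the threshold is exactly $k\ge8$, not ``$k_0\ge7$ or so''. The triple $(k,d,\Delta)=(7,19,50)$ satisfies both hypotheses with equality, yet $66\cdot50=3300>3287=173\cdot19$.
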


\begin{proof}
Put $A=10k-d-1-\Delta$ and $B=16d-37k+5-\Delta$, both nonnegative integers.
Eliminating $k$ gives $47\Delta=123d+13-37A-10B$, hence
\[
 47(66\Delta-173d)=858-13d-2442A-660B.
\]
If $A\ge1$ or $B\ge2$, the right-hand side is negative.
It remains to consider $A=0$ and $B=0,1$.
If $B=0$, then $17d=47k-6$, so $k\equiv7\pmod{17}$; since $k\ge8$, this gives $k\ge24$ and $d\ge66$.
If $B=1$, then $17d=47k-5$, so $k\equiv3\pmod{17}$, giving $k\ge20$ and $d\ge55$.
The right-hand side is therefore nonpositive in both cases and vanishes only when $A=B=0$, $d=66$, and $k=24$.
These values give $\Delta=173$ and do satisfy both hypotheses with equality.
\end{proof}

\begin{proof}[Proof of Theorem~\ref{thm:Hodge}]
The estimates \eqref{eq:lower-logconcave-Castelnuovo} and \eqref{eq:lower-double-point-Castelnuovo}, together with $e=2\Delta$, give the lower bound in \eqref{eq:uniform-Hodge-nonlinear}.
By \eqref{eq:RR} and $c\le10d$, one has $N+1\le d$.
Thus $d\ge5$, and $200/d\le40$ gives the linear lower bound.
It is strict for $d>5$; if $d=5$, then $N=4$ and $X$ is a quintic threefold, for which $\Delta=-100=-4d-80$.

For $N\ge7$, put $k=N+1$.
Proposition~\ref{prop:lefschetz-Hodge} and Theorem~\ref{thm:stable}, using \eqref{eq:RR}, give
\[
 \Delta\le10k-d-1,\qquad \Delta\le16d-37k+5.
\]
Adding $37$ times the first inequality and $10$ times the second gives $47\Delta\le123d+13$.
Lemma~\ref{lem:integer-upper} gives $66\Delta\le173d$.
For $N=4,5$, the complete-intersection cases have $\Delta<0$; for $N=6$, Theorem~\ref{thm:P6exact} gives $\Delta=(49d-588-d^2)/2\le6$ and $d\ge12$.
Thus both upper bounds hold strictly in these remaining cases.
Combining them with \eqref{eq:lefschetz-Hodge} and integrality proves the asserted upper bounds.

Equality in the upper linear bound requires $k=24$, $d=66$, and $\Delta=173$ by Lemma~\ref{lem:integer-upper}.
Proposition~\ref{prop:lefschetz-Hodge} then gives $h^{1,1}(X)\le173$, forcing $(h^{1,1}(X),h^{2,1}(X))=(173,0)$.
\end{proof}

\begin{remark}\label{rem:Hodge-comparison}
Theorem~\ref{thm:Hodge} improves \eqref{eq:KW-old}, and \eqref{eq:uniform-Hodge-nonlinear} gives the more precise endpoints.
The lower equality statement concerns the displayed linear bound; it does not establish asymptotic optimality of its coefficient.
Keeping $c$ instead of eliminating it also gives the independent refinement
\[
 h^{1,1}(X)-h^{2,1}(X)
 \ge\left\lceil-3c+\frac{\mathcal Q(d,c)}2\right\rceil
\]
from Theorem~\ref{thm:jet-volume}.

Lemma~\ref{lem:integer-upper} shows that $173/66$ is the smallest coefficient obtainable from the two upper estimates and integrality alone.
Geometric equality would require
\[
 N=23,\qquad (d,c,e)=(66,156,346),\qquad
 (h^{1,1}(X),h^{2,1}(X))=(173,0).
\]
No very amply polarized Calabi--Yau threefold realizing these numerical data is known to the author.
\end{remark}

We conclude this subsection by noting that the coefficient $2$ in the surface estimate is optimal. 

\begin{example}\label{ex:castelnuovo-sharp}
The surface estimate \eqref{eq:Castelnuovo} is attained in unbounded degree.
Indeed, let $X\subset\PP^1\times\PP^3$ be a smooth hypersurface of bidegree $(2,4)$, and denote the two hyperplane classes and their restrictions by $A,B$.
For every integer $a\ge1$, the divisor $H_a=aA+B$ is very ample.
Adjunction gives $K_X\sim0$, weak Lefschetz gives $H^1(X,\cO_X)=0$, and $c_2(X)=8AB+6B^2$.
Intersecting with $[X]=2A+4B$ in the ambient product gives
\[
\int_X H_a^3=12a+2,\qquad \int_X c_2(X)H_a=24a+44=2 \int_X H_a^3+40.
\]
Thus the coefficient $2$ in this estimate cannot be decreased, regardless of the constant term.
This does not establish asymptotic optimality of the Hodge bounds, which also use other inequalities.
\end{example}

\subsection{Degree bounds and dual varieties}

The preceding estimates also constrain the degree and Euler characteristic for a fixed ambient dimension.
For brevity, set
\[
 \mathcal Q_N(d)=\mathcal Q\bigl(d,12(N+1)-2d\bigr),
\]
where $\mathcal Q(d,c)$ is defined in \eqref{eq:jet-volume-correction}.

\begin{proposition}\label{prop:Nbounds}
Let $(X,H)$ be a very amply polarized Calabi--Yau threefold embedded by $|H|$ in $\PP^N$.
Then
\begin{equation}\label{eq:degree-window}
 3N-7\le d\le6(N+1),
\end{equation}
and
\begin{equation}\label{eq:Nrefined}
 \begin{gathered}
 e\ge\max\left\{12d-72(N+1)+\mathcal Q_N(d),\,
                   -4d-\frac{36(N+1)^2}{d}\right\},\\
 e\le\min\{20(N+1)-2d-2,\,32d-72(N+1)\}.
 \end{gathered}
\end{equation}
\end{proposition}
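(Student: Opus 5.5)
Throughout, I substitute $c=12(N+1)-2d$ from \eqref{eq:RR} into the inequalities already established and read off the claimed bounds.

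\textbf{Degree window.} The upper bound $d\le6(N+1)$ is equivalent to $c\ge0$, which is part of Proposition~\ref{prop:numerical}. For the lower bound, the surface estimate \eqref{eq:Castelnuovo}, $c\le2d+40$, becomes $12(N+1)-2d\le2d+40$, that is, $d\ge3N-7$. Since \eqref{eq:Castelnuovo} was derived using $H^1(X,\cO_X)=0$, the Calabi--Yau hypothesis is exactly what is needed here.

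\textbf{Lower bounds for $e$.} I would insert $c=12(N+1)-2d$ into the two lower bounds already proved.
\begin{itemize}
\item Theorem~\ref{thm:jet-volume}, in the form \eqref{eq:jet-volume-gap}, gives $e\ge-6c+\mathcal Q(d,c)=12d-72(N+1)+\mathcal Q_N(d)$.
\item Theorem~\ref{thm:logconcave} gives $e\ge-5d-c-c^2/(4d)$. Write $c=12k-2d$ with $k=N+1$. Then $c^2/(4d)=36k^2/d-12k+d$, so
\[
-5d-c-\frac{c^2}{4d}=-5d-12k+2d-\frac{36k^2}{d}+12k-d=-4d-\frac{36k^2}{d}.
\]
\end{itemize}
This is the second term of the maximum.

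\textbf{Upper bounds for $e$.} The classical inequality $e\le20d-6c$ becomes $e\le32d-72(N+1)$. The bound $e\le20(N+1)-2d-2$ should come from Proposition~\ref{prop:lefschetz-Hodge}. Using $e=2(h^{1,1}-h^{2,1})\le2h^{1,1}$ and $h^{1,1}\le10(N+1)-d-1$, we get $e\le20(N+1)-2d-2$. I would also check that the bound $e\le4d+2c$ from Proposition~\ref{prop:numerical} gives $e\le24(N+1)$, which is weaker, so it need not appear.

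\textbf{Main obstacle.} The only step needing care is the range of validity. Proposition~\ref{prop:lefschetz-Hodge} and the Castelnuovo estimate are stated for complete embeddings of Calabi--Yau threefolds, which matches our hypotheses. The volume bound requires $0\le c\le10d$, so that $s$ is real; this follows from Proposition~\ref{prop:numerical}. Once these hypotheses are confirmed, everything else is algebraic substitution.
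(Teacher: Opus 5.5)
Your proposal is correct and follows the paper's proof essentially verbatim: you substitute $c=12(N+1)-2d$ into \eqref{eq:Castelnuovo} and $c\ge0$ for the degree window, into Theorems~\ref{thm:jet-volume} and~\ref{thm:logconcave} for the lower bounds on $e$, and into Proposition~\ref{prop:lefschetz-Hodge} (via $e\le 2h^{1,1}(X)$) and \eqref{eq:c3seg} for the upper bounds. All of your algebraic substitutions check out.
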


\begin{proof}
The lower bound in \eqref{eq:degree-window} follows by substituting \eqref{eq:RR} into \eqref{eq:Castelnuovo}; the upper bound follows from $c\ge0$.
Substitution of \eqref{eq:RR} into Theorem~\ref{thm:jet-volume} gives
$e\ge12d-72(N+1)+\mathcal Q_N(d)$.
The same substitution in \eqref{eq:logconcave} gives
\[
 e\ge-5d-c-\frac{c^2}{4d}=-4d-\frac{36(N+1)^2}{d},
\]
proving the other lower bound for $e$.
The first upper bound follows from Proposition~\ref{prop:lefschetz-Hodge} and $e=2(h^{1,1}(X)-h^{2,1}(X))$; the second follows from \eqref{eq:c3seg} and \eqref{eq:RR}.
\end{proof}

\begin{theorem}\label{thm:P6window}
For a nondegenerate Calabi--Yau threefold $X\subset\PP^6$,
\[
 12\le d\le34,\qquad
 24\le\tdeg(X)\le662,\qquad
 \deg X^\vee\le312.
\]
Equality in the dual-degree upper bound holds if and only if $X$ is a complete intersection $X_{2,2,3}$.
\end{theorem}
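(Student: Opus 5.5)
The plan is to combine the exact formulas of Theorem~\ref{thm:P6exact} with the linear inequality $e+6c\ge0$ from Theorem~\ref{thm:jet-volume}. For $X\subset\PP^6$ we have $c=84-2d$ and $e=49d-588-d^2$. Substituting these gives
\[
 e+6c=-d^2+37d-84\ge0 .
\]
The roots of $d^2-37d+84$ are $(37\pm\sqrt{1033})/2$, approximately $2.43$ and $34.57$. Hence $d\le34$. The lower bound $d\ge12$ is already part of Theorem~\ref{thm:P6exact}, which relies on Kapustka--Kapustka.

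For the tangent degree, Theorem~\ref{thm:P6exact} gives $\tdeg(X)=d^2-17d+84$. This is increasing on $d\ge12$, so on the window $12\le d\le34$ its minimum is $24$, at $d=12$, and its maximum is $34^2-17\cdot34+84=1156-578+84=662$, at $d=34$.

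For the dual degree, the critical-case formula recorded after Theorem~\ref{thm:dual-hypersurface} gives
\[
 \deg X^\vee=d^2-49d+756 .
\]
This is a convex parabola with vertex at $d=24.5$. On the interval $[12,34]$ its maximum is therefore attained at an endpoint. The values are $144-588+756=312$ at $d=12$ and $1156-1666+756=246$ at $d=34$. So the maximum is $312$, and it is attained only at $d=12$, since the parabola is strictly below the larger endpoint value elsewhere on the interval.

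The equality statement then follows from Theorem~\ref{thm:P6exact}: $d=12$ holds exactly for $X_{2,2,3}$, and conversely $X_{2,2,3}$ has $\deg X^\vee=312$ by Example~\ref{ex:ci-dual}.

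The only real input is the inequality $e+6c\ge0$; everything else is evaluating quadratics. The point needing care is the hypothesis of Theorem~\ref{thm:jet-volume}, which asks for $H$ very ample and $K_X\sim0$. Both hold here because $X$ is embedded by its hyperplane class, and that embedding is complete by the linear normality used in the proof of Theorem~\ref{thm:P6exact}. I also need to confirm that $d$ is an integer, so that the real bound $34.57$ becomes $d\le34$; this holds since $d=\int_XH^3$.
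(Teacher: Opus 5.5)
Your proposal is correct and follows the paper's own route: Tonoli's identities turn $e+6c\ge0$ into $-d^2+37d-84\ge0$, which gives $d\le34$, and you then evaluate the exact quadratics for $\tdeg(X)$ and $\deg X^\vee$ on the window $12\le d\le34$. The only cosmetic difference is that the paper writes $\deg X^\vee=312+(d-12)(d-37)$, which yields the bound and the equality case $d=12$ in one step (and needs only $d\le37$), whereas you use convexity and compare the values at the endpoints.
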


\begin{proof}
Theorem~\ref{thm:P6exact} gives $d\ge12$ and
\[
 e+6c=(49d-588-d^2)+6(84-2d)=-d^2+37d-84.
\]
Theorem~\ref{thm:jet-volume} implies that this expression is nonnegative, hence
\[
 d\le\frac{37+\sqrt{1033}}{2}<35.
\]
Since $d$ is an integer, $d\le34$.
The formula $\tdeg(X)=d^2-17d+84$ is increasing for $d\ge12$, with values $24$ at $d=12$ and $662$ at $d=34$.
For the dual degree, Section~\ref{sec:dual} gives
\[
 \deg X^\vee=312+(d-12)(d-37)\le312.
\]
In the stated degree range equality holds precisely when $d=12$, which characterizes $X_{2,2,3}$ by Theorem~\ref{thm:P6exact}.
\end{proof}

The bound $d\le34$ improves the upper bound $d\le41$ recorded in \cite[Corollary~2.2]{KK}.
The proof uses only the linear consequence $e+6c\ge0$ of the volume estimate, with no assumption on projective normality or conormal generation.
It does not establish the existence of a degree-$34$ threefold.

The same two inequalities bound the tangent degree from above, uniformly in the polarization.

\begin{corollary}\label{cor:tau-uniform}
Let $(X,H)$ be a very amply polarized Calabi--Yau threefold with complete embedding in $\PP^N$.
If $N\ge7$, then
\[
 \tdeg(X)\le\frac{\omega(X,H)}{2(N-5)}
          \le\frac{10d}{N-5}
          \le\frac{60(N+1)}{N-5}\le240 .
\]
Consequently $\tdeg(X)\le662$ whenever the tangent-incidence morphism is generically finite, that is, whenever $N\ge6$.
\end{corollary}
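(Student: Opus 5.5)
The chain of inequalities will be obtained one link at a time, each from an earlier result.

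\emph{First inequality.} By \eqref{eq:CYfactor}, for $N\ge6$ we have $\omega(X,H)=\tdeg(X)\deg\Tan(X)$. For $N\ge7$, Theorem~\ref{thm:stable} gives $\deg\Tan(X)\ge2(N-5)$, that is, \eqref{eq:stable-omega}. Dividing yields $\tdeg(X)\le\omega(X,H)/(2(N-5))$.

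\emph{Second inequality.} It suffices to show $\omega(X,H)\le20d$. Remark~\ref{rem:dual-logconcave} records this, and it follows from $e+6c\ge0$ in Theorem~\ref{thm:jet-volume}, since $\omega=20d-(e+6c)$. This gives $\omega/(2(N-5))\le10d/(N-5)$.

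\emph{Third and fourth inequalities.} Use the degree window \eqref{eq:degree-window} of Proposition~\ref{prop:Nbounds}, namely $d\le6(N+1)$, which comes from $c\ge0$ and \eqref{eq:RR}. This gives $10d/(N-5)\le60(N+1)/(N-5)$. The function $(N+1)/(N-5)=1+6/(N-5)$ is decreasing in $N$, so for $N\ge7$ it is maximized at $N=7$, with value $4$. Hence the bound is at most $240$.

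\emph{Consequence for $N\ge6$.} By Lemma~\ref{lem:lowcodim}, generic finiteness of $q$ is equivalent to $N\ge6$. For $N\ge7$ we have just shown $\tdeg(X)\le240<662$. For $N=6$, the embedding by $|H|$ is nondegenerate, so Theorem~\ref{thm:P6window} applies and gives $\tdeg(X)\le662$. Combining the two cases proves the final claim.

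No step is a real obstacle. The only point needing care is that each earlier result applies in the complete-embedding setting: Theorem~\ref{thm:stable} needs nondegeneracy, which holds for $|H|$, and \eqref{eq:degree-window} uses $c\ge0$ together with \eqref{eq:RR}.
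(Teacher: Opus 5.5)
Your proof is correct and follows the paper's argument step for step: Theorem~\ref{thm:stable} for the first link, $e+6c\ge0$ from Theorem~\ref{thm:jet-volume} for $\omega(X,H)\le20d$, $d\le6(N+1)$ from $c\ge0$ and \eqref{eq:RR}, monotonicity of $60(N+1)/(N-5)$, and Theorem~\ref{thm:P6window} for the case $N=6$. One label is slightly off: the bound $\deg\Tan(X)\ge2(N-5)$ you invoke is \eqref{eq:tan-lower}, of which \eqref{eq:stable-omega} is the immediate consequence, but this changes nothing.
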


\begin{proof}
Let $N\ge7$. Theorem~\ref{thm:stable} gives $\omega(X,H)\ge2(N-5)\tdeg(X)$, while Theorem~\ref{thm:jet-volume} gives $\omega(X,H)\le20d$ and \eqref{eq:degree-window} gives $d\le6(N+1)$.
The function $60(N+1)/(N-5)$ is decreasing in $N$ and equals $240$ at $N=7$.
For $N=6$, Theorem~\ref{thm:P6window} gives $\tdeg(X)\le662$, and for $N\le5$ the morphism is not generically finite by Lemma~\ref{lem:lowcodim}.
\end{proof}

Thus the largest uniform upper bound for the tangent degree occurs in the critical case, while the stable-range bound decreases to $60$ as $N\to\infty$. 
Conjecture~\ref{conj:CYtau} predicts the much stronger value $\tdeg(X)=1$ in the stable range.

In the stable range, we combine the preceding estimates with the tangent- and secant-variety bounds.

\begin{corollary}\label{cor:Nstable}
If $N\ge7$, then
\begin{align*}
 e \ge & \max\Bigl\{12d-72(N+1)+\mathcal Q_N(d),\  -4d-\frac{36(N+1)^2}{d}, \ -d^2+49d+N^2-95N-52\Bigr\},\\
 e \le & \min\{20(N+1)-2d-2,\,32d-74N-64\}.
\end{align*}
\end{corollary}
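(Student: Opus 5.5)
The plan is to assemble the five bounds from results already proved, substituting $c=12(N+1)-2d$ from \eqref{eq:RR} wherever $c$ appears.

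The first two lower bounds are exactly those of Proposition~\ref{prop:Nbounds}. They come from Theorem~\ref{thm:jet-volume} and Theorem~\ref{thm:logconcave}, and need no hypothesis on $N$. The upper bound $e\le20(N+1)-2d-2$ is likewise already in \eqref{eq:Nrefined}, obtained from Proposition~\ref{prop:lefschetz-Hodge}.

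The genuinely stable-range inputs are the remaining two bounds.

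\emph{Third lower bound.} Use the secant estimate \eqref{eq:secant-lower} of Proposition~\ref{prop:double-point}, valid for $N\ge7$:
\[
 e\ge-d^2+35d-7c+(N-5)(N-6)+2 .
\]
Substituting $-7c=-84(N+1)+14d$ gives
\[
 -d^2+49d-84N-84+N^2-11N+30+2=-d^2+49d+N^2-95N-52,
\]
which is the stated expression.

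\emph{Second upper bound.} Use Theorem~\ref{thm:stable}, namely \eqref{eq:stable-chern}:
\[
 e\le20d-6c-2(N-5)-2 .
\]
Substituting $-6c=-72(N+1)+12d$ gives
\[
 32d-72N-72-2N+10-2=32d-74N-64 .
\]
This improves the bound $32d-72(N+1)$ of Proposition~\ref{prop:Nbounds}.

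Taking the maximum of the lower bounds and the minimum of the upper bounds proves the corollary. The argument is bookkeeping; the only real content lies in the stable-range ingredients, Theorem~\ref{thm:stable} and \eqref{eq:secant-lower}, which rely on non-rationality of $X$ and the secant-variety results. These are assumed. The main care point is getting the arithmetic of the substitution right, and the computations above confirm the stated constants.
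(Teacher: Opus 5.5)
Your proposal is correct and follows the paper's proof exactly: the first two lower bounds and the first upper bound are taken from Proposition~\ref{prop:Nbounds}, and the remaining two come from substituting $c=12(N+1)-2d$ into \eqref{eq:secant-lower} and \eqref{eq:stable-chern}. Your arithmetic for both substitutions is right.
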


\begin{proof}
The first two lower bounds and the first upper bound are those of Proposition~\ref{prop:Nbounds}.
The third lower bound follows by substituting $c=12(N+1)-2d$ into \eqref{eq:secant-lower}.
The same substitution in \eqref{eq:stable-chern} gives the second upper bound, $e\le32d-74N-64$. 
\end{proof}

The same estimates give a polarization-independent lower bound for the degree of the dual hypersurface.

\begin{theorem}\label{thm:dual-lower}
For every very amply polarized Calabi--Yau threefold,
\[
 \deg X^\vee\ge78.
\]
\end{theorem}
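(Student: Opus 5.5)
The plan rests on the identity obtained by substituting \eqref{eq:RR} into \eqref{eq:dual-degree}. Writing $k=N+1$ and $c=12k-2d$, the terms in $d$ cancel:
\[
 \deg X^\vee=4d+2c-e=24k-e .
\]
So a lower bound for $\deg X^\vee$ is the same thing as an upper bound for $e$ at fixed $N$. I will split into the three ambient regimes of Section~\ref{sec:tangentdegree}.

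\emph{Low codimension and critical case.} For $N=4,5$ the only possibilities are $X_5$, $X_{2,4}$ and $X_{3,3}$. By Example~\ref{ex:ci-dual} their dual degrees are $320$, $320$ and $288$.

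For $N=6$, the formula following Theorem~\ref{thm:dual-hypersurface} gives
\[
 \deg X^\vee=d^2-49d+756=312+(d-12)(d-37).
\]
By Theorem~\ref{thm:P6window} we have $12\le d\le34$. Over this range the quadratic is minimized at the integers $d=24,25$, where it equals $156$. Hence $\deg X^\vee\ge156$ in the critical case.

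\emph{Stable range $N\ge7$.} Here I use the two upper bounds of Corollary~\ref{cor:Nstable}:
\[
 e\le20k-2d-2 \quad\text{and}\quad e\le32d-74N-64=32d-74k+10 .
\]
The first comes from the Lefschetz estimate of Proposition~\ref{prop:lefschetz-Hodge}, the second from the tangent-variety bound of Theorem~\ref{thm:stable}. Inserting them into $\deg X^\vee=24k-e$ gives
\[
 \deg X^\vee\ge\max\{4k+2d+2,\;98k-32d-10\}.
\]
The first expression increases in $d$ and the second decreases in $d$. So for fixed $k$ the maximum is smallest when the two are equal, namely at $d=(94k-12)/34$. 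There the common value is
\[
 \frac{162k+22}{17},
\]
which is increasing in $k$. At $k=8$ it equals $1318/17\approx77.5$, so $\deg X^\vee>77$ for every $N\ge7$.

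Theorem~\ref{thm:dual-hypersurface} says $\deg X^\vee$ is even, so integrality upgrades this to $\deg X^\vee\ge78$. Combining the three regimes proves the theorem.

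The main obstacle is the stable-range step. The weaker constraint $d\ge3N-7$ alone only yields about $62$ when fed into the first bound, so the tangent-variety inequality \eqref{eq:stable-chern} really is needed to balance the two estimates. The final margin is tight: the real-variable minimum $77.5$ sits just below $78$. So I will have to invoke the evenness of $\deg X^\vee$, and must check that the optimization is done correctly at $k=8$.
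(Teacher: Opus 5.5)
Your proof is correct and follows the paper's argument essentially verbatim. It uses the identity $\deg X^\vee=24k-e$ and the split into $N\le5$, $N=6$, $N\ge7$; in the stable range it uses the two bounds $\deg X^\vee\ge4k+2d+2$ and $\deg X^\vee\ge98k-32d-10$ from Corollary~\ref{cor:Nstable}, and your balancing in $d$ is equivalent to the paper's step of adding $16$ times the first to the second to get $17\deg X^\vee\ge162k+22\ge1318$. Two minor simplifications: since $1318/17>77$, integrality alone already gives $78$, so evenness is not needed, and $(d-24)(d-25)+156\ge156$ holds for every integer $d$ without invoking Theorem~\ref{thm:P6window}.
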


\begin{proof}
Put $\delta^\vee=\deg X^\vee$ and $k=N+1$.
By \eqref{eq:dual-degree} and \eqref{eq:RR}, $\delta^\vee=24k-e$.
For $N\ge7$, Corollary~\ref{cor:Nstable} gives
\[
 \delta^\vee\ge2d+4k+2,\qquad
 \delta^\vee\ge98k-32d-10.
\]
Multiplying the first inequality by $16$ and adding the second eliminates $d$:
\[
 17\delta^\vee\ge162k+22\ge1318.
\]
Since $\delta^\vee$ is even, this gives $\delta^\vee\ge78$.
For $N=6$, the exact formula in Section~\ref{sec:dual} gives
\[
 \delta^\vee=d^2-49d+756=(d-24)(d-25)+156\ge156,
\]
using the integrality of $d$.
For $N=4,5$, the quintic threefold $X_5$ and the complete intersections $X_{2,4}$ and $X_{3,3}$ have dual degrees $320$, $320$, and $288$, respectively.
\end{proof}

\begin{remark}\label{rem:dual-equality}
If $\deg X^\vee=78$, then necessarily
\[
 N=7,\qquad (d,c,e)=(22,52,114),\qquad
 (h^{1,1}(X),h^{2,1}(X))=(57,0),
\]
and the embedding is not $2$-normal.
Indeed, the preceding proof forces $N=7$, and its two lower bounds then give $78\ge2d+34$ and $78\ge774-32d$, hence $d=22$. 
Equations~\eqref{eq:RR} and \eqref{eq:dual-degree} yield $c=52$ and $e=114$.
By \eqref{eq:lefschetz-h11}, $h^{1,1}(X)\le57$, while $e/2=h^{1,1}(X)-h^{2,1}(X)=57$ forces the stated Hodge numbers.
Finally, Riemann--Roch and Kodaira vanishing give
\[
 h^0(X,\cO_X(2H))=38>36=h^0(\PP^7,\cO_{\PP^7}(2)),
\]
so the restriction map on quadratic sections cannot be surjective.
These are necessary conditions; the argument does not determine whether the lower bound $78$ is attained.
\end{remark}

\begin{example}\label{ex:Tonoli-dual}
Let $X\subset\PP^6$ be one of Tonoli's degree-$17$ Calabi--Yau threefolds \cite{Tonoli}.
Theorem~\ref{thm:P6exact} gives $(d,c,e)=(17,50,-44)$, so \eqref{eq:dual-degree} yields $\deg X^\vee=212$. 
The value $212$ is the smallest dual degree known to the author for a very amply polarized Calabi--Yau threefold.
\end{example}


\section{Quadratic equations and conormal positivity}\label{sec:quadratic}

Quadratic generation of the homogeneous ideal supplies two further inputs.
A general canonical hyperplane section is then cut out by quadrics, which sharpens the surface estimate of Subsection~\ref{subsec:geometric-bounds}, and the twisted conormal bundle becomes globally generated, which gives a second application of mixed-intersection log-concavity.
Together they determine the numerical invariants in low ambient dimension: completely for $N=7$, and up to two values of $e$ for $N=8$.

\subsection{Quadratically defined embeddings}

We call an embedding quadratically defined when its homogeneous ideal is generated by quadrics.
The surface argument of \cite{KW} becomes stronger when a general canonical hyperplane section is generically an intersection of quadrics.

\begin{proposition}\label{prop:quadratic-refinement}
Let $(X,H)$ be a very amply polarized Calabi--Yau threefold embedded by $|H|$ in $\PP^N$.
Suppose that a general smooth hyperplane section, in its canonical embedding in $\PP^{N-1}$, is an irreducible component of the common zero locus of the quadrics containing it.
This holds, in particular, if the homogeneous ideal of $X$ is generated by quadrics.
Then
\begin{equation}\label{eq:quadratic-degree}
 c\le d+48,\qquad d\ge4N-12.
\end{equation}
\end{proposition}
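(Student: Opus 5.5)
The plan is to work entirely on a general smooth hyperplane section $S\subset X$, in its canonical embedding $S\subset\PP^{N-1}$. As in Subsection~\ref{subsec:geometric-bounds}, this surface has
\[
 p_g(S)=N,\qquad K_S^2=d,\qquad q(S)=0.
\]
I would first prove the degree bound $d\ge4N-12$, which is a strengthened Castelnuovo inequality $K_S^2\ge4p_g(S)-12$. I would then deduce $c\le d+48$ from \eqref{eq:RR}. Indeed, $N\le(d+12)/4$ gives
\[
 c=12(N+1)-2d\le3(d+12)+12-2d=d+48.
\]

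\emph{The parenthetical claim.} Suppose the ideal of $X$ is generated by quadrics. Restricting these quadrics to a hyperplane $\PP^{N-1}$ produces quadrics whose common zero locus is set-theoretically $X\cap\PP^{N-1}=S$. Hence $S$ is a component, indeed all, of the zero locus of the quadrics containing it. No surjectivity of restriction is needed for this step.

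\emph{The main bound.} I would cut $S$ twice more by general hyperplanes.
\begin{itemize}[itemsep=1pt,topsep=3pt]
 \item The first cut gives a smooth irreducible curve $C\subset\PP^{N-2}$ of degree $d$. Adjunction gives $K_C=2H|_C$, so $g(C)=d+1$.
 \item The second cut gives a set $\Gamma$ of $d$ points in $\PP^{r}$, with $r=N-3$. By the general position lemma, $\Gamma$ is in uniform position.
\end{itemize}
I would then run Castelnuovo's argument using the Hilbert function $h_\Gamma$. The standard estimate $h^0(\cO_C(k))-h^0(\cO_C(k-1))\ge h_\Gamma(k)$ bounds the genus above by $\sum_k(d-h_\Gamma(k))$. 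The key dichotomy comes from Castelnuovo's lemma: if $d\ge2r+3$ and $h_\Gamma(2)\le2r+1$, then $\Gamma$ lies on a rational normal curve.

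\emph{Case analysis.}
\begin{itemize}[itemsep=1pt,topsep=3pt]
 \item If $\Gamma$ does not lie on a rational normal curve, then $h_\Gamma(2)\ge\min\{d,2r+2\}$. The usual increments $h_\Gamma(k+1)\ge\min\{d,h_\Gamma(k)+r\}$ then yield Harris's second Castelnuovo bound $g\le\pi_1(d,r+1)$. Substituting $g=d+1$ and unwinding the explicit formula for $\pi_1$ gives $d\ge4(r+3)-12=4N-12$. Alternatively, I would simply cite the known result: a canonical surface that is not contained in a threefold of minimal degree satisfies $K^2\ge4p_g-12$.
 \item If $\Gamma$ does lie on a rational normal curve, then every quadric containing $\Gamma$ contains that curve. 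For a general choice of the two hyperplanes, these curves sweep out a variety $W\supseteq S$ of dimension $3$ and minimal degree $N-3$ (a rational normal scroll). The quadrics containing $S$ restrict to quadrics containing $\Gamma$, so they all contain $W$. Then $S$ would not be a component of their common zero locus, contradicting the hypothesis.
\end{itemize}

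\emph{Main obstacle.} The hard part is this second case. One must show that the rational normal curves through the sections really fit together into a threefold containing $S$ that lies in every quadric through $S$. This is the classical argument (Castelnuovo, Harris, Reid) that points in uniform position lying on a rational normal curve lift to a scroll through the curve and then through the surface. I would invoke it from the literature rather than reprove it.

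The remaining small cases, where $d<2r+3$ so that Castelnuovo's lemma does not apply, are also excluded by the inequality $d\ge3N-7$ of \eqref{eq:degree-window}, or else give the bound directly.
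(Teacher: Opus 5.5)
\textbf{There is a genuine gap in your main case.} Your reduction to $K_S^2\ge4p_g(S)-12$, the parenthetical claim, and the Riemann--Roch conversion $N\le(d+12)/4\Rightarrow c\le d+48$ are all fine. The problem is the case where $\Gamma$ is not on a rational normal curve: there the argument does not give that inequality. The bounds $h_\Gamma(1)=r+1$ and $h_\Gamma(k)\ge\min\{d,kr+2\}$ for $k\ge2$ give
\[
 d+1=g\le(d-r-1)+\sum_{k\ge2}\max\{0,\,d-kr-2\}.
\]
The right-hand side already equals $d+1$ at $d=3r+3$, since the terms $k=2,3$ contribute $r+1$ and $1$. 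So unwinding $\pi_1(d,r+1)$ yields only $d\ge3N-6$, i.e.\ $K_S^2\ge3p_g(S)-6$, not $d\ge4N-12$.

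This bound is sharp, so the step cannot be repaired by more careful arithmetic. Take $X=\Gr(2,5)\cap H_1\cap H_2\cap F_3\subset\PP^7$, a Calabi--Yau threefold with $(N,d,c)=(7,15,66)$, so $c>d+48$. Its hyperplane section is $S=V_5\cap F_3\subset\PP^6$, where $V_5$ is the quintic del Pezzo threefold. Then:
\begin{itemize}
\item $p_g(S)=7$ and $K_S^2=15<16=4p_g(S)-12$.
\item Its point sections $\Gamma$ are $15$ points on an elliptic normal quintic in $\PP^4$. Hence $h_\Gamma(2)=10=2r+2$, and $\Gamma$ lies on no rational normal curve.
\item Since $S$ is ACM, it lies on exactly $28-23=5$ quadrics. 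A threefold of minimal degree in $\PP^6$ lies on $6$, so $S$ is contained in no such threefold.
\end{itemize}
The last point also refutes your fallback citation that canonical surfaces not contained in a threefold of minimal degree satisfy $K^2\ge4p_g-12$.

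\textbf{The missing idea.} The quadric hypothesis must be used against every low-degree carrier, not only rational normal curves. In the example, the quadrics through $\Gamma$ contain the elliptic curve, and those through $S$ cut out $V_5$; this is exactly what the hypothesis excludes. To reach $d\ge4r$ by your method you would need something like $h_\Gamma(2)\ge\min\{d,3r-1\}$ under that hypothesis, which is far beyond Castelnuovo's lemma.

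The paper does not reprove this. It takes $c\le d+48$ directly from \cite[Proposition~2.2(3)]{KW}, a result going back to Reid's work on quadrics through canonical surfaces. That result says $K_S^2\ge4p_g(S)-12$ when the canonical image is a component of the intersection of the quadrics through it. The paper then converts with \eqref{eq:RR}, exactly as you do. You should replace your case analysis by that citation.

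Incidentally, your rational-normal-curve case is easier than you suggest. There, the base locus of the quadrics through $S$ meets a general codimension-$2$ linear space in a curve. It therefore has a component of dimension at least $3$ containing $S$, and no scroll construction is needed.
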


\begin{proof}
Let $S\in|H|$ be a general smooth member.
Its canonical embedding is the hyperplane section of $X\subset\PP^N$, as explained in Subsection~\ref{subsec:geometric-bounds}.
Under the stated hypothesis, \cite[Proposition~2.2(3)]{KW} gives $c\le d+48$.
In particular, the hypothesis holds if the ideal of $X$ is generated by quadrics, since its hyperplane section is then scheme-theoretically cut out by quadrics.
Substituting $c=12(N+1)-2d$ from \eqref{eq:RR} yields $d\ge4N-12$.
\end{proof}

Recall that, in the present smooth setting, an embedding satisfies property $N_2$ if it is projectively normal, its homogeneous ideal is generated by quadrics, and the relations among its minimal quadratic generators are generated by relations with linear coefficients.

\begin{corollary}\label{cor:quadratic-tau}
Under the hypotheses of Proposition~\ref{prop:quadratic-refinement}, assume that $N\ge7$ and that the embedding satisfies property $N_2$.
Then $\tdeg(X)=1$.
\end{corollary}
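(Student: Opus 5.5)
Under $N_2$ the tangent-incidence morphism should be generically injective. The reason is that the quadrics through $X$ cut out $X$ scheme-theoretically, and the linear syzygies make the \emph{entry locus} of a general tangent point behave well. For a general point $z\in\Tan(X)$, lying on $T_xX$ for a general $x\in X$, the aim is to show that $x$ is the only point $x'$ of $X$ with $z\in T_{x'}X$. Then $\tdeg(X)=1$ by the description of $\tdeg$ following \eqref{eq:tandegree}. By Theorem~\ref{thm:stable} and Lemma~\ref{lem:lowcodim} we have $\dim\Tan(X)=6$ and $\dim\Sec(X)=7$ for $N\ge7$, so $q$ is generically finite and it suffices to count preimages over a general point.

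The first step is to use the quadrics. Let $Q$ be a quadric containing $X$ and $z\in T_xX$. Then $Q$ restricted to the tangent space $T_xX\simeq\PP^3$ vanishes to order $2$ at $x$. Hence $Q|_{T_xX}$ is a quadric cone with vertex $x$, in fact a union of lines through $x$. If also $z\in T_{x'}X$ with $x'\ne x$, then $Q$ is singular-cone-like at both $x$ and $x'$ along the respective tangent spaces. Restricted to the line $\ell=\langle x,z\rangle$, $Q$ vanishes doubly at $x$. Restricted to $\langle x',z\rangle$, it vanishes doubly at $x'$. Using the plane $\Pi=\langle x,x',z\rangle$, each $Q$ restricted to $\Pi$ is a conic singular, or tangent, at $x$ in the direction $xz$ and at $x'$ in the direction $x'z$. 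A conic through $x,x'$ tangent to the two lines $xz$ and $x'z$ forms a pencil. The plan is to show that linear syzygies ($N_2$) force $\Pi\cap X$ to contain a conic, or force the line $\langle x,x'\rangle$ to lie on $X$. This is the standard mechanism by which $N_2$ implies that the entry locus of a general secant or tangent point has no extra components; compare the Vermeire/Ein--Lazarsfeld style results that a variety satisfying $N_2$ has no trisecant lines and that its secant variety is well behaved.

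The second step derives a contradiction from generality. If through a general tangent point a second tangent space passes, then by a dimension count the variety swept out would make $X$ contain a $3$-dimensional family of conics or lines whose planes meet $T_xX$ in a special way. Alternatively, and this is the version I would actually try first, I would use property $N_2$ via the known statement that $\Sec(X)$, when $X$ satisfies $N_2$, has the expected behaviour: through a general point of $\Sec(X)$ passes a unique secant line, i.e.\ $\mu=1$. Degenerating secant lines to tangent lines then gives the tangential version. The tangent point $z$ on the general tangent line at $x$ is a limit of points on secant lines $\langle x,y\rangle$ with $y\to x$. Uniqueness of the entry locus for $N_2$ embeddings (no degenerate entry loci, since the entry locus of a point of $\Sec(X)$ is cut out by the quadrics, and by $N_2$ it is a zero-dimensional scheme of length $2$) then shows that the entry scheme of $z$ is the length-$2$ scheme supported at $x$. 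Consequently no other $x'$ has $z\in T_{x'}X$.

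The main obstacle is this last step: making rigorous that, under $N_2$, the scheme-theoretic entry locus of a general point of $\Tan(X)$ is exactly the tangent length-$2$ scheme at a single point. The subtlety is that $z$ lies in $\Tan(X)$, which is a proper subvariety of $\Sec(X)$ by Theorem~\ref{thm:stable}, and so is not a general point of $\Sec(X)$. I would handle it directly. Projecting from $z$, the quadrics through $X$ cut out on each line through $z$ a scheme of length at most $2$, since $N_2$ rules out trisecant lines and the linear syzygies control the quadric-cut locus. Two distinct tangency points $x,x'$ would give a plane $\Pi$ whose intersection with the quadric base locus contains a length-$4$ subscheme (two tangent vectors at $x$ and $x'$) inside a conic. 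A conic lying on all quadrics through $X$ must then lie on $X$. I would then exclude conics through two general points of $X$ with tangent directions concurrent at $z$ by the nonrationality and non-secant-defectivity already used in Theorem~\ref{thm:stable}.
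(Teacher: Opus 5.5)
You are taking a different route from the paper, whose proof is just \cite[Example~29]{HGR} combined with the secant nondefectivity of Theorem~\ref{thm:stable}. Your plane $\Pi=\langle x,x',z\rangle$, containing the length-$4$ scheme $Z$ formed by the tangent vectors $xz$ at $x$ and $x'z$ at $x'$, is a good starting point. However, the step where $N_2$ must enter is left as a plan, and the tool you name cannot supply it. Absence of trisecant lines already follows from $X$ being cut out by quadrics, and it says nothing about a plane meeting $X$ in four points with no three collinear.

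What you need is the theorem of Eisenbud--Green--Hulek--Popescu. If $X$ satisfies $N_{2,2}$ (quadratic generation with linear first syzygies, which your $N_2$ hypothesis contains), then a plane meeting $X$ in a finite scheme meets it in length at most $3$. The conics of $\Pi$ through $Z$ form the pencil spanned by $\ell_{xx'}^2$ and $\ell_{xz}\ell_{x'z}$, whose base scheme is exactly $Z$. Hence the restricted quadrics span at most one conic $C$, so $\Pi\cap X=C$ or $\Pi\subset X$. In the collinear case, $\langle x,x'\rangle$ meets $X$ in length at least $4$ and therefore lies in $X$.

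Your exclusion step also fails as stated. Nonrationality does not forbid a covering family of conics. Secant nondefectivity via Terracini would require $(x,x')$ to be a general pair in $X\times X$, which you do not know: only $x$ is general, while $x'$ is determined by $z$. (If the pairs were general, Terracini alone would already give a contradiction, with no conic needed.)

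The missing idea is non-uniruledness. Because $q$ is generically finite with irreducible source, every preimage of a general $z$ is a general point of $\mathcal T$. So $x$ is a general point of $X$, and the conic, line, or plane found above passes through $x$. Then $X$ would be uniruled, which is impossible since $h^0(\cO_X(K_X))=1$. With these two inputs your argument closes. It even avoids the classification of secant-defective threefolds, needing only $\dim\Tan(X)=6$ from Lemma~\ref{lem:lowcodim}.

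Drop the alternative via $\mu=1$. Property $N_2$ does not imply it: $\nu_2(\PP^3)$ satisfies $N_2$ yet has $\tdeg=2$, as in Remark~\ref{rem:veronese}. Moreover, uniqueness at general points of $\Sec(X)$ would not pass to general points of the proper subvariety $\Tan(X)$.
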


\begin{proof}
The syzygy criterion of \cite[Example~29]{HGR}, together with the secant nondefectivity in Theorem~\ref{thm:stable}, gives $\tdeg(X)=1$.
\end{proof}

\begin{remark}
The degree bound \eqref{eq:quadratic-degree} is sharp: the complete intersection $X_{2,2,2,2}\subset\PP^7$ has $d=2^4=4\cdot7-12$.
Quadratic generation suffices for this degree estimate; the additional property $N_2$ hypothesis is used only for Corollary~\ref{cor:quadratic-tau}.
In particular, the degree estimate does not assert property $N_2$ for the complete intersection of four quadrics.
\end{remark}

\begin{corollary}\label{cor:quadratic-Hodge}
Under the hypothesis on a general hyperplane section in Proposition~\ref{prop:quadratic-refinement}, put $\Delta=h^{1,1}(X)-h^{2,1}(X)$.
Then
\[
 \begin{gathered}
 \Delta\ge\max\left\{
       \left\lceil-\frac{25d}{8}-36-\frac{288}{d}\right\rceil,\,
       \left\lceil\frac{-d^2+28d-336}{2}\right\rceil\right\},\\
 \Delta\le\min\left\{\left\lfloor\frac{173d}{66}\right\rfloor,\,
              \left\lfloor\frac{123d+13}{47}\right\rfloor,\,
              \left\lfloor\frac{3d}{2}+39\right\rfloor\right\}.
 \end{gathered}
\]
\end{corollary}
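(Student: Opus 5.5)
The plan is to rerun the proof of Theorem~\ref{thm:Hodge}, but to feed in the sharper surface estimate $c\le d+48$ from Proposition~\ref{prop:quadratic-refinement} wherever that proof used the Castelnuovo bound \eqref{eq:Castelnuovo}. Throughout we use $e=2\Delta$.

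For the two lower bounds, start from \eqref{eq:logconcave}. Its right-hand side $-5d-c-c^2/(4d)$ is decreasing in $c\ge0$, so we may substitute $c=d+48$. This gives
\[
 e\ge-6d-48-\frac{(d+48)^2}{4d}=-\frac{25d}{4}-72-\frac{576}{d},
\]
and halving yields $\Delta\ge-\frac{25d}{8}-36-\frac{288}{d}$. Next, \eqref{eq:double-point-lower} gives $e\ge-d^2+35d-7c$, which is also decreasing in $c$. Substituting $c\le d+48$ gives $e\ge-d^2+28d-336$, and halving gives the second lower bound. Since $\Delta$ is an integer, we may take ceilings in both.

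For the upper bounds, the first two entries, $\lfloor 173d/66\rfloor$ and $\lfloor(123d+13)/47\rfloor$, are already established in Theorem~\ref{thm:Hodge} without any quadratic hypothesis, so they carry over unchanged. For the third entry, use \eqref{eq:lefschetz-h11} together with $h^{2,1}(X)\ge0$:
\[
 \Delta\le\frac{4d+5c}{6}-1\le\frac{9d+240}{6}-1=\frac{3d}{2}+39.
\]
Integrality then gives the floor.

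No genuine obstacle is expected. The only points to check are the monotonicity in $c$ of each bound being substituted, which holds because $c\ge0$, and that the hyperplane-section hypothesis of Proposition~\ref{prop:quadratic-refinement} is exactly what is needed to invoke $c\le d+48$.
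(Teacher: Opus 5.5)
Your proposal is correct and follows the paper's own proof: substitute $c\le d+48$ into \eqref{eq:logconcave} and \eqref{eq:double-point-lower} (both right-hand sides decrease in $c$), halve, and round up. For the upper bounds, combine \eqref{eq:lefschetz-h11}, which gives $(4d+5c)/6-1\le 3d/2+39$, with the two bounds already proved in Theorem~\ref{thm:Hodge}.
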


\begin{proof}
Substitute $c\le d+48$ into Theorem~\ref{thm:logconcave} and \eqref{eq:double-point-lower}, divide by $2$, and use integrality to obtain the two lower bounds.
The upper bound follows from \eqref{eq:lefschetz-h11}, since $(4d+5c)/6-1\le3d/2+39$, together with Theorem~\ref{thm:Hodge}.
\end{proof}


\subsection{Conormal positivity and quadratic generators}

Quadratic generation also provides a globally generated bundle with a smaller first Chern class than the jet bundle in low ambient dimension.
This gives a second application of mixed-intersection log-concavity.
The quantities introduced below are the relevant Segre intersections of the twisted conormal bundle $E=N_{X/\PP^N}^*(2H)$.

\begin{proposition}\label{prop:conormal}
Let $(X,H)$ be a very amply polarized Calabi--Yau threefold embedded by $|H|$ in $\PP^N$, and suppose that its homogeneous ideal is generated by quadrics.
Then $N\ge7$; if $N=7$, the threefold is a complete intersection of four quadrics.
If $N\ge8$, put $k=N-7$ and define
\[
 a=\left(\binom{k+1}{2}-4\right)d+c,\qquad
 b=\left(\binom{k+2}{3}-4k-8\right)d+(k+4)c+e.
\]
Then
\begin{equation}\label{eq:conormal-inequalities}
 a>0,\qquad 0\le b\le\frac{a^2}{kd}.
\end{equation}
\end{proposition}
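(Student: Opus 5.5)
The approach is to apply the mixed-intersection argument of Theorem~\ref{thm:logconcave} to the twisted conormal bundle $E=N_{X/\PP^N}^*(2H)$ in place of $J^1(H)$. $E$ has rank $N-3$. Quadratic generation of the homogeneous ideal makes the evaluation map $H^0(\PP^N,\mathcal I_X(2))\otimes\cO_X\to \mathcal I_X/\mathcal I_X^2(2)=E$ surjective, so $E$ is globally generated. From the Euler and tangent sequences, $c_1(N_{X/\PP^N})=(N+1)H$, and hence
\[
 c_1(E)=\bigl(2(N-3)-(N+1)\bigr)H=(N-7)H .
\]

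The case $N\le 7$ then follows from the sign of $c_1(E)$. A globally generated bundle $F$ has $c_1(F)\cdot H^2\ge0$, and equality forces $F$ to be trivial: a general choice of $\operatorname{rk}F$ sections gives a map $\cO_X^{\operatorname{rk}F}\to F$ whose degeneracy divisor lies in $|c_1(F)|$, so it is empty when $c_1(F)\cdot H^2=0$. For $N\le6$ this is incompatible with $c_1(E)=(N-7)H$, so $N\ge7$. (Alternatively, $N=4,5$ are excluded directly, since $X_5$, $X_{2,4}$ and $X_{3,3}$ are not cut out by quadrics.) For $N=7$ we have $c_1(E)=0$, so $E\simeq\cO_X^4$. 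Then four quadrics whose classes trivialize $\mathcal I_X/\mathcal I_X^2$ generate the ideal locally along $X$. Since the ideal is generated by quadrics, these four quadrics cut out $X$ scheme-theoretically in codimension $4$, and $X$ is a complete intersection of four quadrics.

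For $N\ge8$, put $r=k+4=\operatorname{rk}E$, $\mathcal P=\PP(E^*)$, $\zeta=c_1(\cO_{\mathcal P}(1))$ and $\eta=\pi^*H$. Both classes are nef. By \eqref{eq:projective-bundle-pushforward}, $\pi_*\zeta^{r-1+i}=s_i(E^*)$. Next compute the Segre classes of $E^*=N_{X/\PP^N}(-2H)$ using $c(N(-H))=s(J^*)$, the values $c_1=4H$ and $c_2=10H^2-c_2(X)$ read off from \eqref{eq:jetchern}, and a twist by $-H$. This should give
\[
 \int\zeta^{r-1}\eta^3=d,\qquad \int\zeta^r\eta^2=kd,\qquad \int\zeta^{r+1}\eta=a,\qquad \int\zeta^{r+2}=b .
\]
Checking that these are exactly the displayed $a$ and $b$ is a routine but careful binomial computation, which I would carry out explicitly. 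Nefness of $\zeta$ and $\eta$ then gives $a,b\ge0$. Khovanskii--Teissier, applied as in the proof of Theorem~\ref{thm:logconcave}, gives $(\int\zeta^{r+1}\eta)^2\ge(\int\zeta^{r+2})(\int\zeta^r\eta^2)$, which is $a^2\ge b\,kd$, that is, $b\le a^2/(kd)$.

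The main obstacle is the strict inequality $a>0$. For $k\ge3$ the coefficient $\binom{k+1}{2}-4$ is positive and $c\ge0$ by Proposition~\ref{prop:numerical}, so $a>0$ is immediate. For $k=1,2$ it is not. My first attempt would be a geometric argument. If $a=0$, then $\zeta^{r+1}\eta=0$, so the image of $\mathcal P$ under the morphism given by $|\cO_{\mathcal P}(1)|$ has dimension at most $r$. Each fibre $\PP^{r-1}$ is mapped linearly and injectively, since the fibre $E_x^*$ injects into $H^0(\mathcal I_X(2))^*$. So a general point of the image lies on a positive-dimensional family of these linear spaces, and the conormal spaces $N_{X,x}^*$ move only in a one-parameter family. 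In terms of the quadrics, the Gauss-type map $x\mapsto N^*_{X,x}(2)\subset H^0(\mathcal I_X(2))$ would have image of dimension at most $1$. I expect this to contradict the nondegeneracy of $X$ together with $\dim\Sec(X)=7$ from Theorem~\ref{thm:stable}. Failing that, I would combine the other Khovanskii--Teissier inequality $(kd)^2\ge a\,d$ with the bound $c\ge 4N-12$-type estimates from Proposition~\ref{prop:quadratic-refinement} to handle $k=1,2$ numerically.
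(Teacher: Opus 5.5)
\textbf{Gap: the strict inequality $a>0$ for $k=1,2$.} For $N\ge8$ your approach is the paper's: global generation of $E=N^*_{X/\PP^N}(2H)$, the identifications $\int_X s_1(E^*)H^2=kd$, $\int_X s_2(E^*)H=a$, $\int_X s_3(E^*)=b$ (these do check out), and Khovanskii--Teissier, giving $a\ge0$ and $0\le b\le a^2/(kd)$. For $k\ge3$ your argument that $a>0$ is correct. For $k=1,2$ you name $a>0$ as the main obstacle but do not prove it.

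Neither of your proposed arguments closes this case:
\begin{itemize}
\item The geometric sketch ends in an expectation, not an argument. Moreover, $\dim\phi(\mathcal P)\le r$ does not force the spaces $\PP(E_x^*)$ to move in a one-parameter family, since they could all be hyperplanes of one fixed $\PP^r$.
\item The numerical fallback cannot work as described. The inequality $(kd)^2\ge ad$ is an upper bound on $a$. At $N=9$, $a=0$ means $d=c=40$, and this satisfies $d\ge4N-12=24$, $c\le d+48$, $0\le c\le10d$ and $c\le2d+40$. So nothing on your list excludes it.
\end{itemize}

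\textbf{The missing idea.} Substitute \eqref{eq:RR}, $c=12(N+1)-2d$, into $a$, and then feed the upper bound on $b$ back in.
\begin{itemize}
\item For $N=8$ this gives $a=108-5d$, which cannot vanish since $5\nmid108$.
\item For $N=9$ it gives $a=120-3d$, so $a=0$ forces $d=c=40$. The inequality you already have, $0\le b\le a^2/(kd)=0$, then forces $b=e-240=0$, that is, $e=240$. This contradicts the hyperplane-section bound $e\le20(N+1)-2d-2=118$ of Proposition~\ref{prop:lefschetz-Hodge}. Equivalently, it would give $\deg X^\vee=4d+2c-e=0$, contradicting Theorem~\ref{thm:dual-hypersurface}.
\end{itemize}

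\textbf{The case $N=7$.} Here you take a genuinely different route. The paper uses $d\ge16$ from Proposition~\ref{prop:quadratic-refinement}, together with Bézout for four general quadrics through $X$. Your route is valid and avoids the Castelnuovo-type input from \cite{KW}: $c_1(E)=0$ plus global generation gives $E\simeq\cO_X^4$, and Nakayama then shows that four quadrics lifting a basis of $H^0(E)$ generate $\mathcal I_X$ on a neighbourhood of $X$.

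However, your stated reason for the last step is not a reason. Quadratic generation of the ideal says nothing about components of $V(Q_1,\dots,Q_4)$ lying away from $X$. You need the following argument:
\begin{itemize}
\item Every component of $V(Q_1,\dots,Q_4)$ has dimension at least $3$.
\item A component of dimension at least $4$ would meet $X$ in $\PP^7$. This is impossible, because $V(Q_1,\dots,Q_4)=X$ near $X$.
\item Hence $V(Q_1,\dots,Q_4)$ is a complete intersection of dimension $3$, and therefore connected.
\item Since $X$ is open and closed in it, $V(Q_1,\dots,Q_4)=X$.
\end{itemize}
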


\begin{proof}
Set $E=N_{X/\PP^N}^*(2H)$, of rank $r=N-3$.
Quadratic generation gives a surjection $H^0(\PP^N,\mathcal I_X(2))\otimes\cO_X\to E$, so $E$ is globally generated.
The twisted conormal sequence and the Euler sequence give
\[
 c(E)=\frac{c(\Omega_{\PP^N}^1(2)|_X)}{c(\Omega_X^1(2H))},
 \qquad
 c(\Omega_{\PP^N}^1(2)|_X)=\frac{(1+H)^{N+1}}{1+2H}.
\]
In particular, $c_1(E)=(N-7)H$.
Since $\det E$ is nef, $N\ge7$.
If $N=7$, Proposition~\ref{prop:quadratic-refinement} gives $d\ge16$.
As the quadrics through $X$ have base scheme $X$ of codimension $4$, four general such quadrics form a complete intersection containing $X$.
By B\'ezout its degree is $16$, so $d\le16$.
Hence $d=16$, and equality of degrees and unmixedness show that this complete intersection is $X$ itself.

Assume now that $N\ge8$.
Writing $k=N-7$ and expanding through degree $3$ yields
\begin{align*}
 c_1(E)&=kH, \\
 c_2(E)&=\left(\binom{k}{2}+4\right)H^2-c_2(X),\\
 c_3(E)&=\left(\binom{k}{3}+4k-8\right)H^3
             +(4-k)c_2(X)H+c_3(X).
\end{align*}
For $k\ge1$, the Segre classes of $E^*$ satisfy
\[
 \int_Xs_1(E^*)H^2=kd,\qquad
 \int_Xs_2(E^*)H=a,\qquad
 \int_Xs_3(E^*)=b.
\]
Let $p:\PP(E^*)\to X$, put $\eta=p^*H$, and let $\xi=c_1(\cO_{\PP(E^*)}(1))$.
Since $E$ is globally generated, $\xi$ is nef, while $\eta$ is nef because $H$ is ample.
By \eqref{eq:segre-convention}, the projective-bundle formula gives
\[
 \int_{\PP(E^*)}\xi^r\eta^2=kd,\qquad
 \int_{\PP(E^*)}\xi^{r+1}\eta=a,\qquad
 \int_{\PP(E^*)}\xi^{r+2}=b.
\]
Their nonnegativity and the Khovanskii--Teissier inequality \cite[Section~1.6]{Laz}, applied as in the proof of Theorem~\ref{thm:logconcave}, give $a\ge0$ and $0\le b\le a^2/(kd)$.

It remains to exclude $a=0$.
Substituting \eqref{eq:RR} gives
\[
 a=\left(\binom{k+1}{2}-6\right)d+12(N+1).
\]
For $N\ge10$ this is positive.
For $N=8$, the nonnegative integer $a=108-5d$ cannot vanish, since $108$ is not divisible by $5$.
For $N=9$, the equality $a=120-3d=0$ would give $d=c=40$.
The inequality for $b$ just proved would then force $b=e-240=0$, contradicting the hyperplane-section bound $e\le20(N+1)-2d-2=118$ from Proposition~\ref{prop:lefschetz-Hodge}.
This proves \eqref{eq:conormal-inequalities}.
\end{proof}

The number of independent quadrics gives a second degree bound.
The key point is that the kernel of the evaluation map onto the twisted conormal bundle must have rank at least $2$.
We measure the failure of $2$-normality by the defect $\varepsilon=h^1(\PP^N,\mathcal I_X(2))$.

\begin{proposition}
\label{prop:quadratic-generators}
Under the hypotheses of Proposition~\ref{prop:conormal}, assume that $N\ge8$.
Then
\[
 4N-12\le d\le
 \min\left\{6(N+1),\,\frac{N(N-3)}2+\varepsilon\right\}.
\]
If $d=N(N-3)/2+\varepsilon$, then $b=0$ in Proposition~\ref{prop:conormal}.
For $N\ge12$, this equality is impossible, and the second upper bound improves to $N(N-3)/2+\varepsilon-1$.
The embedding is $2$-normal precisely when $\varepsilon=0$.
\end{proposition}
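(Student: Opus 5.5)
The lower bound $4N-12\le d$ is exactly \eqref{eq:quadratic-degree} from Proposition~\ref{prop:quadratic-refinement}, and $d\le6(N+1)$ is the upper half of \eqref{eq:degree-window}. The last sentence is formal. From $0\to\mathcal I_X(2)\to\cO_{\PP^N}(2)\to\cO_X(2H)\to0$ and $H^1(\PP^N,\cO_{\PP^N}(2))=0$, the defect $\varepsilon$ is the cokernel dimension of the restriction map on quadrics. Hence $\varepsilon=0$ is exactly $2$-normality. So the real content is the bound $d\le N(N-3)/2+\varepsilon$ and its equality case.

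\emph{Counting quadrics.} Riemann--Roch, Kodaira vanishing and \eqref{eq:RR} give
\[
h^0(X,\cO_X(2H))=\tfrac{4d}{3}+\tfrac{c}{6}=d+2(N+1).
\]
Put $q:=h^0(\PP^N,\mathcal I_X(2))$. The long exact sequence then gives $q=\binom{N+2}{2}-d-2(N+1)+\varepsilon=\frac{(N+1)(N-2)}2-d+\varepsilon$. As in the proof of Proposition~\ref{prop:conormal}, quadratic generation gives a surjection $H^0(\mathcal I_X(2))\otimes\cO_X\to E=N^*_{X/\PP^N}(2H)$. Its kernel $K$ is a vector bundle of rank $q-(N-3)$. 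I claim $\operatorname{rk}K\ge2$; this inequality is equivalent to
\[
d\le\frac{(N+1)(N-2)}2-(N-3)-2+\varepsilon=\frac{N(N-3)}2+\varepsilon.
\]
To prove the claim, use $c(K)c(E)=1$, hence $s(E^*)=c(E^*)^{-1}=c(K^*)$.
\begin{itemize}[itemsep=1pt,topsep=3pt]
\item If $\operatorname{rk}K=0$, then $E$ is trivial, contradicting $c_1(E)=kH\neq0$.
\item If $\operatorname{rk}K=1$, then $c(K^*)=1+kH$, so $s_2(E^*)=0$ and $a=0$. This contradicts $a>0$ in \eqref{eq:conormal-inequalities}.
\end{itemize}

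\emph{Equality case.} If $d=N(N-3)/2+\varepsilon$, then $K$ has rank exactly $2$. Therefore $b=\int_Xs_3(E^*)=\int_Xc_3(K^*)=0$, which is the asserted consequence.

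\emph{Excluding equality for $N\ge12$.} It suffices to show that $b>0$ for $N\ge12$. Then rank $2$ is impossible, so $\operatorname{rk}K\ge3$ and $d\le N(N-3)/2+\varepsilon-1$. I would substitute $c=12(N+1)-2d$ into the formula for $b$ in Proposition~\ref{prop:conormal}, writing $k=N-7$. This gives
\[
b=\Bigl(\tbinom{k+2}{3}-6k-16\Bigr)d+12(k+4)(N+1)+e.
\]
I would then insert the lower bound $e\ge-4d-36(N+1)^2/d$ from \eqref{eq:Nrefined}, or alternatively $e\ge12d-72(N+1)+\mathcal Q_N(d)$ or the secant bound of Corollary~\ref{cor:Nstable}, whichever is stronger in the relevant range. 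After that I would minimise over the window $4N-12\le d\le6(N+1)$. The cubic coefficient $\binom{k+2}{3}$ dominates once $k\ge5$, so positivity should hold for $N\ge12$.

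This numerical verification is the step I expect to be the main obstacle. The thresholds are delicate near $N=12$, $k=5$, where the $d$-coefficient is small or negative. If one lower bound for $e$ alone does not suffice, I would split the $d$-range and use a different lower bound on each piece. As an extra tool I would use that $K^*$ is a globally generated rank-$2$ bundle with $c_1(K^*)=kH$ and $\int_X c_2(K^*)H=a$; positivity of its Schur classes gives more inequalities to combine with $b=0$ and reach a contradiction.
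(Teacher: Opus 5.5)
Your quadric count, the bound $\operatorname{rk}K\ge2$ from $a>0$, and the deduction $b=\int_X c_3(K^*)=0$ in the equality case all coincide with the paper's proof. The one step you leave unfinished is excluding equality for $N\ge12$. It is not delicate, and the second bound on your list already settles it.

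By \eqref{eq:RR}, $12d-72(N+1)=-6c$. So your bound $e\ge12d-72(N+1)+\mathcal Q_N(d)$ is just $e+6c\ge\mathcal Q(d,c)\ge0$ from Theorem~\ref{thm:jet-volume}. Write $b=g(k)d+(k+4)c+e$ with $g(k)=\binom{k+2}{3}-4k-8$. Then
\[
 b\ \ge\ g(k)\,d+(k-2)\,c .
\]
For $k=N-7\ge5$ you have $g(k)\ge g(5)=7$ (as $g$ is increasing there), $k-2\ge3$, and $c\ge0$. Hence $b>0$, and no minimization over $d$ is needed. This is exactly the paper's argument, which phrases it as: $b=0$ would force $e+6c=-g(k)d-(k-2)c<0$.

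The negative $d$-coefficient you worried about near $k=5$ appears only with the log-concave bound $e\ge-4d-36(N+1)^2/d$. That route also closes: the resulting lower bound for $b$ is concave in $d$ and positive at both $d=4N-12$ and $d=6(N+1)$. It is unnecessary, though.
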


\begin{proof}
Set $\ell=h^0(\PP^N,\mathcal I_X(2))$ and use the bundle $E$ from the preceding proof.
The ideal-sheaf sequence, Riemann--Roch, and Kodaira vanishing give
\[
 \ell=\binom{N+2}{2}-h^0(X,\cO_X(2H))+\varepsilon
      =\frac{(N+1)(N-2)}2-d+\varepsilon.
\]
The surjection $\cO_X^{\oplus\ell}\to E$ has a vector-bundle kernel $K$ of rank
\[
 t=\ell-(N-3)=\frac{N^2-3N+4}{2}-d+\varepsilon.
\]
Thus $c(K)=c(E)^{-1}$; in particular, $c_2(K)=s_2(E^*)$ and $c_3(K)=-s_3(E^*)$.
If $t\le1$, then $c_2(K)=0$, contradicting $\int_Xc_2(K)H=a>0$ by Proposition~\ref{prop:conormal}.
Thus $t\ge2$, which gives $d\le N(N-3)/2+\varepsilon$.
The remaining bounds follow from Proposition~\ref{prop:quadratic-refinement} and $c\ge0$.

Equality in the new upper bound gives $t=2$, hence $c_3(K)=0$ and $b=-\int_Xc_3(K)=0$.
Suppose now that $N\ge12$ and equality holds.
Writing $g(k)=\binom{k+2}{3}-4k-8$, the vanishing $b=0$ reads $e=-g(k)d-(k+4)c$, so that
\[
 e+6c=-g(k)d-(k-2)c.
\]
Since $k=N-7\ge5$, one has $g(k)\ge7>0$ and $k-2>0$, while $c\ge0$ by Proposition~\ref{prop:numerical}.
The right-hand side is therefore negative, contradicting Theorem~\ref{thm:jet-volume}.
Integrality yields the asserted improvement by $1$.
\end{proof}

For $2$-normal embeddings, the upper bound $N(N-3)/2$ is stronger than $6(N+1)$ for $8\le N\le15$, and the strict refinement applies for $12\le N\le15$.
No linearity assumption on their first syzygies is used.


\subsection{Embeddings in \texorpdfstring{$\PP^8$ and $\PP^9$}{P8 and P9}}

We now specialize the preceding inequalities to the first two ambient dimensions beyond $\PP^7$.

\begin{corollary}
\label{cor:quadratic-P8}
Let $(X,H)$ be a Calabi--Yau threefold embedded by $|H|$ in $\PP^8$, with homogeneous ideal generated by quadrics.
Then
\[
 d=20,\qquad c=68,\qquad e\in\{-120,-118\}.
\]
If the embedding is $2$-normal, then $e=-120$.
\end{corollary}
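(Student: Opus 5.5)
The plan is to specialize the numerical inequalities of Propositions~\ref{prop:quadratic-refinement}, \ref{prop:conormal} and~\ref{prop:quadratic-generators} to $N=8$, that is $k=N-7=1$, and then finish using the parity of $e$. No new geometric input should be needed.

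First I pin down $d$. Proposition~\ref{prop:quadratic-refinement} gives $d\ge4N-12=20$, and \eqref{eq:RR} gives $c=108-2d$. With $k=1$ the invariant of Proposition~\ref{prop:conormal} is
\[
 a=\Bigl(\tbinom{2}{2}-4\Bigr)d+c=c-3d=108-5d .
\]
Since $a>0$, we get $d\le21$, so $d\in\{20,21\}$.

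Next I use the second invariant,
\[
 b=\Bigl(\tbinom{3}{3}-4-8\Bigr)d+5c+e=-11d+5c+e,
\]
which satisfies $0\le b\le a^2/d$. I treat the two possible degrees separately.
\begin{itemize}
 \item If $d=21$, then $c=66$, $a=3$ and $b=99+e$. The bound $0\le b\le 9/21<1$ forces $b=0$, hence $e=-99$. This contradicts the evenness of $e=2(h^{1,1}(X)-h^{2,1}(X))$, so $d=21$ is impossible.
 \item If $d=20$, then $c=68$, $a=8$ and $b=120+e$. The bound $0\le b\le 64/20=3.2$ gives $b\in\{0,1,2,3\}$, so $e\in\{-120,-119,-118,-117\}$. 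Evenness of $e$ leaves $e\in\{-120,-118\}$.
\end{itemize}

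For the $2$-normal case, $\varepsilon=0$, so Proposition~\ref{prop:quadratic-generators} gives $d\le N(N-3)/2=20$. Since $d=20$, this bound is an equality, and that proposition then yields $b=0$, that is $e=-120$.

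The only delicate step is excluding $d=21$. The inequality $b\le a^2/d$ is very tight there and pins $e$ to an odd value, so the argument relies on combining the Khovanskii--Teissier bound with the parity of $e$. It is worth double-checking that the Segre-class formulas for $a$ and $b$ specialize correctly at $k=1$, since the whole case analysis depends on those two expressions.
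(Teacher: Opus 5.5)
Your proposal is correct and follows the paper's own proof essentially step for step. The steps are: $d\ge20$ from Proposition~\ref{prop:quadratic-refinement}; $a=108-5d>0$ forcing $d\le21$; exclusion of $d=21$ because $0\le b=e+99<1$ would make $e$ odd; the range $b=e+120\in\{0,2\}$ (using parity) for $d=20$; and the equality case of Proposition~\ref{prop:quadratic-generators} at $d=N(N-3)/2=20$ giving $b=0$ in the $2$-normal case. The $k=1$ specializations $a=c-3d$ and $b=e+5c-11d$ you flagged are exactly the ones the paper uses, and they agree with a direct expansion of $c(E)$ for $E=N_{X/\PP^8}^*(2H)$.
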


\begin{proof}
Proposition~\ref{prop:quadratic-refinement} gives $d\ge20$.
By \eqref{eq:RR}, $c=108-2d$, and for $k=1$ the quantities in Proposition~\ref{prop:conormal} are $a=c-3d=108-5d$ and $b=e+5c-11d$. 
Thus $a>0$ forces $d\le21$.
If $d=21$, then $c=66$, $a=3$, and $b=e+99$ is an odd integer.
But \eqref{eq:conormal-inequalities} would give $0\le b\le9/21<1$, a contradiction.
Hence $d=20$, so $c=68$, $a=8$, and $b=e+120$ is an even integer with $0\le b\le64/20$.
Therefore $b=0$ or $2$, giving the two stated values of $e$.

If the embedding is $2$-normal, then $d=20=N(N-3)/2$.
The equality case of Proposition~\ref{prop:quadratic-generators} gives $b=0$, hence $e=-120$.
\end{proof}

We use the Pl\"ucker embedding of $\Gr(2,5)\subset\PP^9$.

\begin{example}\label{ex:quadratic-P8}
An example attaining $d=20$ in Corollary~\ref{cor:quadratic-P8} is the Calabi--Yau threefold
\[
 X=\Gr(2,5)\cap \Lambda\cap Q_1\cap Q_2\subset \Lambda\simeq\PP^8,
\]
where $\Lambda\subset\PP^9$ is a general hyperplane and $Q_1,Q_2$ are general quadrics.
Its ideal in $\Lambda$ is generated by the restricted Pl\"ucker quadrics and the two additional quadrics.
The Pl\"ucker coordinate ring is Cohen--Macaulay, and quotienting by this regular sequence preserves that property, so the embedding is projectively normal.
Thus $e=-120$ by Corollary~\ref{cor:quadratic-P8}.
Weak Lefschetz gives $h^{1,1}(X)=1$, hence $h^{2,1}(X)=61$.
\end{example}

\begin{remark}
Corollary~\ref{cor:quadratic-P8} is a numerical restriction on all embeddings satisfying its hypotheses.
It does not classify them as Grassmannian complete intersections.
In particular, the possible value $e=-118$ is not realized by the preceding example and would require failure of $2$-normality.
\end{remark}

\begin{corollary}
\label{cor:quadratic-P9}
Let $(X,H)$ be a Calabi--Yau threefold embedded by $|H|$ in $\PP^9$, with homogeneous ideal generated by quadrics.
Put $\varepsilon=h^1(\PP^9,\mathcal I_X(2))$.
Then
\[
 24\le d\le\min\{35,\,27+\varepsilon\}.
\]
In particular, if the embedding is $2$-normal, then $24\le d\le27$, and $d=27$ forces $c=66$ and $e=-72$.
\end{corollary}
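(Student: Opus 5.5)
The plan is to specialize Propositions~\ref{prop:quadratic-refinement}, \ref{prop:conormal} and \ref{prop:quadratic-generators} to $N=9$, so that $k=2$, and to use \eqref{eq:RR}, which gives $c=120-2d$. The lower bound $d\ge24$ is just $d\ge4N-12$ from Proposition~\ref{prop:quadratic-refinement}. For the upper bound I will use the quantity $a$ of Proposition~\ref{prop:conormal}. With $k=2$ one has $\binom{3}{2}-4=-1$, so
\[
 a=-d+c=120-3d .
\]
The condition $a>0$ then gives $d<40$, which is not yet enough. I will therefore bring in the inequality $0\le b\le a^2/(kd)$. With $\binom{4}{3}-8-8=-12$,
\[
 b=-12d+6c+e=e+720-24d .
\]

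To reach $d\le35$, I will combine this with an upper bound on $e$. The relevant one from Proposition~\ref{prop:Nbounds} (or Corollary~\ref{cor:Nstable}) is $e\le20(N+1)-2d-2=198-2d$. Substituting gives
\[
 0\le b\le 918-26d ,
\]
so $d\le35$. The complementary bound $e\le32d-74N-64=32d-730$ gives $b\le 8d-10$, which is consistent and gives nothing new. I expect $918-26d\ge0$ to be exactly the source of the $35$, since $918/26\approx35.3$. If it turns out to be insufficient, the fallback is the bound $b\le(120-3d)^2/(2d)$ together with the parity of $b$. The second upper bound $d\le27+\varepsilon$ is $d\le N(N-3)/2+\varepsilon=27+\varepsilon$ from Proposition~\ref{prop:quadratic-generators}. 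Together these give $24\le d\le\min\{35,27+\varepsilon\}$.

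For the $2$-normal case, $\varepsilon=0$ gives $24\le d\le27$. If $d=27$, we are in the equality case of Proposition~\ref{prop:quadratic-generators}, so $b=0$. Then \eqref{eq:RR} gives $c=120-54=66$, and
\[
 e=24d-720=648-720=-72 .
\]
As a sanity check, $a=120-81=39>0$, and $e=-72$ lies within the bounds of Corollary~\ref{cor:Nstable}.

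The main obstacle is making sure the $35$ really comes out of these inequalities. If $198-2d$ is not the intended input, I would instead test the lower bound $e\ge-4d-36\cdot100/d$ against $b\ge0$ and $a>0$, and pick whichever combination yields $35$. Everything else is direct substitution into results already proved.
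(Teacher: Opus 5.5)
Your proposal is correct and matches the paper's proof: $b=e+720-24d\ge0$ together with the hyperplane-section bound $e\le198-2d$ gives $26d\le918$, hence $d\le35$, while $24\le d\le27+\varepsilon$ and the equality case $b=0$ (forcing $c=66$ and $e=-72$) come from Propositions~\ref{prop:quadratic-refinement} and~\ref{prop:quadratic-generators}. The detour through $a>0$ and the fallback options are unnecessary, since $918/26<36$ already yields $d\le35$.
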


\begin{proof}
For $N=9$, \eqref{eq:RR} gives $c=120-2d$, and the inequality $b=e+6c-12d\ge0$ becomes $e\ge24d-720$.
Proposition~\ref{prop:lefschetz-Hodge} gives $e\le198-2d$, hence $26d\le918$ and $d\le35$.
The remaining degree bounds follow from Proposition~\ref{prop:quadratic-generators}.
If $\varepsilon=0$ and $d=27$, then $c=66$ and the equality case of that proposition gives $0=b=e+72$.
\end{proof}


\section{Normalization and singularities of tangent varieties}
\label{sec:powers}

The tangent variety of a Veronese re-embedding is easier to control than that of the original embedding, and the results of this section are independent of the Calabi--Yau condition.
Subsection~\ref{subsec:normalization-theorem} shows that the tangent-incidence morphism of $|mH|$ is the normalization of the tangent variety for every $m\ge2$, and identifies the singular locus for $m\ge3$.
Subsection~\ref{subsec:multiples-CY} specializes to Calabi--Yau threefolds, and Subsection~\ref{subsec:quadratic-tangent-singularities} treats the remaining case $m=2$.
We denote the $m$th Veronese embedding by $\nu_m$ and identify vectors in $V$ with elements of degree $1$ in $\operatorname{Sym}V$.


\subsection{The normalization theorem}\label{subsec:normalization-theorem}

\begin{theorem}\label{thm:veronese}
Let $Y\subset\PP(V)$ be a smooth irreducible projective variety of dimension $n>0$, and let $H$ be its hyperplane divisor.
Let $m\ge2$, and assume that $Y$ is not a linear subspace if $m=2$.
Consider the tangent-incidence morphism
\[
 q_m:\PP(J^1(mH)^*)\longrightarrow\Tan(\nu_m(Y)),
\]
and the corresponding morphism of the complete embedding defined by $|mH|$.
For both of them:
\begin{enumerate}[label=\textup{(\roman*)},itemsep=1pt,topsep=3pt]
 \item the morphism is finite and birational, hence the normalization of its $2n$-dimensional image; in particular its tangent degree is $1$;
 \item for $m=2$, every fibre has at most two geometric points;
 \item for $m\ge3$, the morphism is bijective and is an isomorphism away from the embedded copy $Y_m$ of $Y$, and $\operatorname{Sing}(\Tan(Y_m))_{\mathrm{red}}=Y_m$;
 \item for $m\ge3$ and $n\ge2$, the tangent variety is not Cohen--Macaulay.
\end{enumerate}
\end{theorem}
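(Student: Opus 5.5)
Write points of $\PP(J^1(mH)^*)$ as pairs $(x,z)$ with $z\in T_x\nu_m(Y)$. The plan is to describe these tangent spaces algebraically, compute fibres of $q_m$ directly, then deduce normalization and singularity statements formally. We may assume the given embedding: for the complete embedding by $|mH|$, $\nu_m(Y)$ is a linear projection of the complete one, so $q_m$ for the complete system factors through the given one. Hence finiteness and the fibre bounds transfer upward, and the local statements are checked the same way.

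\textbf{Description of tangent spaces.} Locally at $x=[v]$, a point of $T_x\nu_m(Y)$ is represented by the degree-$m$ form $v^{m-1}(\lambda v+w)$ with $w\in \hat T_xY\subset V$; modulo $v$, the $w$ is determined. Thus a point of $\PP(J^1(mH)^*)$ is a class $[v^{m-1}u]$ with $u\in\hat T_xY$ and $u$ not collinear with $v$ unless $z=x$.

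\textbf{Fibres.} Suppose $v^{m-1}u$ and $v'^{m-1}u'$ are proportional with $[v]\ne[v']$. Unique factorization in $\operatorname{Sym}V$ gives $m-1\le1$, so for $m\ge3$ each $z\notin Y_m$ has exactly one preimage. A point $z\in Y_m$ is $[v^m]$; its preimages are $(x,z)$ with $v'^{m-1}u'\propto v^m$, forcing $[v']=[v]$. So $q_m$ is bijective for $m\ge3$.

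For $m=2$, the form $vu$ has at most two linear factors up to scale, so every fibre has at most two points, proving (ii). Birationality for $m=2$ needs that a general $vu$ with $u\in \hat T_vY$ does not also satisfy $[u]\in Y$ with $v\in\hat T_uY$. If it did for general $(v,u)$, then $Y$ would contain a general point of each tangent space. Hence $T_xY\subset Y$, so $Y$ equals its own tangent spaces and is linear, which the hypothesis excludes.

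\textbf{Finiteness.} Finiteness follows from quasi-finiteness plus properness. Since $q_m$ is birational and finite onto its image, it is the normalization, and the image has dimension $2n$, proving (i).

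\textbf{Isomorphism off $Y_m$ for $m\ge3$.} We need $dq_m$ injective at $(x,z)$ with $z\notin Y_m$. Varying $v$ and $u$, the differential sends $(\dot v,\dot u)$ to $(m-1)v^{m-2}\dot v\,u+v^{m-1}\dot u$ modulo the form itself. If this lies in the span of $v^{m-1}u$, divisibility by $v^{m-2}$ with $m\ge3$ and $u\nmid v$ forces $\dot v\parallel v$, so $\dot u$ lies in the allowed span. Injectivity is then a direct linear-algebra check using unique factorization again. Unramified together with bijective and finite gives a closed immersion over the complement of $Y_m$. Normality of $\Tan$ off $Y_m$ then follows since the source is smooth.

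\textbf{Singular locus and (iv).} Along $Y_m$, the fibre of the normalization over $x$ is one point, but the source is smooth while $\Tan$ has many tangent directions. Concretely, the Zariski tangent space of $\Tan$ at $[v^m]$ contains $v^{m-2}\cdot\operatorname{Sym}^2(\hat T_xY)$ through second-order curves $v(t)^{m-1}u(t)$. Its dimension exceeds $2n$ when $n\ge1$, so $\operatorname{Sing}_{\mathrm{red}}=Y_m$.

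For (iv), a variety whose normalization is bijective but not an isomorphism along a locus of codimension $n\ge2$ fails Serre's $S_2$. If $\Tan$ were Cohen--Macaulay it would be $S_2$. It is also $R_1$, since it is smooth off codimension $n\ge2$. Serre's criterion would then make it normal, hence isomorphic to its normalization, a contradiction.

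\textbf{Main obstacle.} I expect the main obstacle to be the scheme-theoretic part: injectivity of $dq_m$ off $Y_m$, and the Zariski tangent computation at $Y_m$. For the former, I would try to work in an affine chart with $v=e_0$ and local parameters of $Y$.
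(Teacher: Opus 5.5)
Most of your route matches the paper's. You use unique factorization in $\operatorname{Sym}V$ for the fibres, the coprimality computation $(m-1)\dot v\,u+v\dot u=\lambda vu$ for the differential off $D$, the criterion that finite, unramified and universally injective implies a closed immersion off $Y_m$, and Serre's criterion for (iv). There is, however, a genuine gap in the step showing that $\Tan(Y_m)$ is singular along $Y_m$. The projective dimension of $\PP(v^{m-2}\operatorname{Sym}^2\hat T_xY)$ is $\binom{n+2}{2}-1=n(n+3)/2$. This exceeds $2n$ only when $n\ge2$; for $n=1$ it equals $2=2n$, so your claim ``exceeds $2n$ when $n\ge1$'' is false.

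Concretely, take the twisted cubic $\nu_3(\PP^1)\subset\PP^3$. Your second-order curves only put the osculating plane into the Zariski tangent space of its tangent developable. That is consistent with smoothness, yet this quartic surface has a cuspidal edge along the curve. So for curves you have not proved $\operatorname{Sing}(\Tan(Y_m))_{\mathrm{red}}=Y_m$. The containment itself is also only correct modulo $v^{m-1}V$: the second-order directions are second derivatives of curves on $\nu_m(Y)$ and pick up second-fundamental-form terms of $Y$. For $n\ge2$ the count survives after projecting modulo $v^{m-1}V$. Even then, you would still have to redo the computation for the complete embedding.

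The paper avoids tangent-space counts altogether. In a local chart the incidence morphism is $(z,b)\mapsto f(z)+df_z(b)$, and its differential along $D=\{b=0\}$ is $(a,b)\mapsto df_z(a+b)$, of rank $n<2n$. Suppose $\Tan$ were normal at $f(z)$. Then the finite birational morphism from the smooth source would be an isomorphism over a neighbourhood, and its composite with the ambient embedding would have injective differential, a contradiction. This gives nonnormality, hence singularity, along $Y_m$ for every $n\ge1$ and for either embedding. It is also exactly the nonnormality that Serre's criterion needs in (iv).

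Finally, a direction slip in your reduction. It is the Veronese morphism that factors through the complete one, $q_m=\pi\circ q_m^{\mathrm c}$, and this needs the check that the centre of $\pi$ misses every embedded tangent space. That holds because $\operatorname{Sym}^mV^*$ separates first jets. With that in place, your transfer of the fibre bounds and of differential injectivity (by the chain rule) to $q_m^{\mathrm c}$ is correct.
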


We separate the calculation for the Veronese embedding from the passage to the complete linear system.
Let $Y$, $H$, $m$, and $q_m$ be as in the statement of Theorem~\ref{thm:veronese}.
Let $D\subset\PP(J^1(mH)^*)$ be the distinguished section obtained by selecting the base point $\nu_m(y)$ in each embedded tangent space, for $y\in Y$.

\begin{lemma}\label{lem:veronese-incidence}
The morphism $q_m$ is generically injective and has injective differential away from $D$.
For $m=2$, its fibres have at most two geometric points; for $m\ge3$, it is universally injective.
\end{lemma}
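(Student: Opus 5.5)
We work in the ambient Veronese space $\PP(\operatorname{Sym}^mV)$, where $\nu_m(y)=[v^m]$ for $v\in V$ a representative of $y$. Differentiating along a curve shows that the embedded tangent space is
\[
 T_{\nu_m(y)}\nu_m(Y)=\PP\bigl(v^{m-1}\cdot\widehat T_y\bigr),
\]
where $\widehat T_y\subset V$ is the $(n+1)$-dimensional cone over $T_yY$. So a point of $\PP(J^1(mH)^*)$ over $y$ is represented by $v^{m-1}w$ with $w\in\widehat T_y$, and $w$ is unique up to adding multiples of $v$ once the scaling is fixed. The distinguished section $D$ consists of the points with $w\in\CC v$, that is, $[v^m]$. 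The whole argument rests on unique factorization in $\operatorname{Sym}V$.

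\textbf{Fibres.} Suppose $[v^{m-1}w]=[v'^{m-1}w']$ with $y\ne y'$. Since $[v]\ne[v']$, unique factorization in the polynomial ring gives the following.
\begin{itemize}
 \item If $m\ge3$, the factor $v$ occurs to order at least $m-1\ge2$ on the left. On the right, $v$ can only divide $w'$, so it occurs to order at most $1$. This is a contradiction. Hence distinct points of $Y$ give disjoint images of tangent spaces.
 \item Within one fibre over $y$, the linear map $w\mapsto v^{m-1}w$ is injective. Together with the previous point, $q_m$ is injective on geometric points. This also holds after any field extension, so $q_m$ is universally injective.
 \item If $m=2$, the equality $vw=v'w'$ forces $w\in\CC v'$ and $w'\in\CC v$. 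The point is then $[vv']$ with $v'\in\widehat T_y$ and $v\in\widehat T_{y'}$.
\end{itemize}
In the case $m=2$, a given product $[\ell_1\ell_2]$ has at most two factorizations into linear forms. Hence every fibre has at most two geometric points, namely $y=[\ell_1]$ and $y'=[\ell_2]$, each with the appropriate tangency.

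\textbf{Generic injectivity for $m=2$.} A general point of the incidence is $[vw]$ with $w\in\widehat T_y$ general. A second preimage requires $[w]\in Y$, with $[v]\in T_{[w]}Y$. If $Y$ is not a linear space, then a general point of $T_yY$ is not on $Y$, because $T_yY\cap Y$ is a proper closed subset of $T_yY$. So $q_2$ is generically injective. This is exactly where the nonlinearity hypothesis for $m=2$ is used.

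\textbf{Injective differential away from $D$.} Take the point $[v^{m-1}w]$ with $w\notin\CC v$. Choose a local curve $v(s)$ in $\widehat Y$ through $v$, and a varying vector $w(s)\in\widehat T_{y(s)}$. The derivative of $v^{m-1}w$ is
\[
 (m-1)v^{m-2}\dot v\,w+v^{m-1}\dot w .
\]
A tangent vector to $\PP(J^1(mH)^*)$ lies in the kernel exactly when this derivative is proportional to $v^{m-1}w$. If $\dot v\notin\CC v$, the derivative contains the term $v^{m-2}\dot v\,w$. Here $w\notin\CC v$, and $\dot v\notin\CC v$ because $Y$ is embedded. Working modulo $v^{m-1}$, unique factorization shows this term is not divisible by $v^{m-1}$, while the other terms are. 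So the derivative cannot be proportional to $v^{m-1}w$. Hence $\dot v\in\CC v$ and the horizontal component vanishes. The remaining vertical part is then injective because $w\mapsto v^{m-1}w$ is a linear embedding.

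\textbf{Main obstacle.} I expect the delicate step to be the divisibility argument for the differential when $m=2$. There the term $\dot v\,w$ must be compared with $v\dot w$ and $vw$ modulo $v$. Both $w$ and $\dot v$ are nonzero modulo $v$, and the polynomial ring $\operatorname{Sym}V/(v)$ is a domain. So their product is nonzero modulo $v$, which makes the argument go through.
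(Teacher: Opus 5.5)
Your proposal is correct and follows the paper's proof essentially step for step. The shared ingredients are the identification $\widehat{T}_{[v^m]}\nu_m(Y)=v^{m-1}\widehat{T}_y$, unique factorization in $\operatorname{Sym}V$ for the fibre bounds and universal injectivity, nonlinearity of $Y$ (via $T_yY\not\subset Y$) for generic injectivity when $m=2$, and divisibility by $v$ after removing $v^{m-2}$ for the differential away from $D$. One small slip: $[v^{m-1}w]$ determines $w$ up to scale, not merely up to adding multiples of $v$, since adding $cv$ changes the point by $cv^m$; this is exactly the injectivity of $w\mapsto v^{m-1}w$ that you correctly use later.
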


\begin{proof}
For $[x]\in Y$, choose a nonzero representative $x\in V$ and write $\widehat{T_{[x]}Y}\subset V$ for the affine cone over the embedded projective tangent space $T_{[x]}Y$.
This is an $(n+1)$-dimensional vector subspace containing $\CC x$.
Differentiating $x\mapsto x^m$ gives $u\mapsto mx^{m-1}u$, and hence
\[
 \widehat{T_{[x^m]}\nu_m(Y)}
 =x^{m-1}\widehat{T_{[x]}Y}
 \subset\operatorname{Sym}^mV.
\]
Projectivizing multiplication by $x^{m-1}$ identifies the incidence bundle with the pairs $([x],[u])$ satisfying $[x]\in Y$ and $[u]\in T_{[x]}Y$, with $q_m([x],[u])=[x^{m-1}u]$.
This description is independent of the chosen representatives, and $D$ consists of the pairs $([x],[x])$.
For fixed $[x]$, multiplication by $x^{m-1}$ is injective.

We first control the fibres by unique factorization in the polynomial ring $\operatorname{Sym}V$.
If $m\ge3$ and $[x]\ne[y]$, a nonzero element in both $x^{m-1}\widehat{T_{[x]}Y}$ and $y^{m-1}\widehat{T_{[y]}Y}$ would be divisible by $x^{m-1}y^{m-1}$.
Its degree would be at least $2(m-1)>m$, a contradiction.
Thus every fibre has exactly one geometric point; the same argument over algebraically closed extensions gives universal injectivity.
For $m=2$, the factors of $xu$ are determined up to order and scale, giving at most two geometric points in each fibre.
In this case $Y$ is assumed to be nonlinear.
For every $[x]\in Y$, the space $T_{[x]}Y$ is not contained in $Y$: such a containment would force $T_{[x]}Y=Y$ by irreducibility and equality of dimensions, contrary to the nonlinearity of $Y$.
Thus a general incidence pair has $[u]\notin Y$.
For such a pair, the exchanged first factor $[u]$ cannot belong to $Y$, so $q_2$ is generically injective.

We next compute the differential away from $D$.
Represent a tangent vector by first-order variations $(\dot x,\dot u)$ of local nonzero representatives $(x,u)$.
If its image under the differential of $q_m$ vanishes projectively, differentiating $x^{m-1}u$ and cancelling $x^{m-2}$ gives
\[
 (m-1)\dot x\,u+x\dot u=\lambda xu
\]
for some scalar $\lambda$.
Since $x$ and $u$ are coprime, reduction modulo $x$ gives $x\mid\dot x$, hence $\dot x\in\CC x$.
Substituting back gives $\dot u\in\CC u$.
Both projective variations are therefore zero, proving injectivity of the differential.
\end{proof}

We now transfer the Veronese calculation to the complete linear system without assuming projective normality.
For the complete embedding defined by $|mH|$, let $q_m^{\mathrm c}$ denote the tangent-incidence morphism, with source $\PP(J^1(mH)^*)$.
The dual of the multiplication map $\operatorname{Sym}^mV^*\rightarrow H^0(Y,\cO_Y(mH))$
induces a linear projection $\pi$ from the ambient space of the complete embedding to the linear span of $\nu_m(Y)$.

\begin{lemma}\label{lem:complete-incidence}
The projection $\pi$ is defined on the tangent variety of the complete embedding and satisfies $q_m=\pi\circ q_m^{\mathrm c}$.
All conclusions of Lemma~\ref{lem:veronese-incidence} hold for $q_m^{\mathrm c}$.
\end{lemma}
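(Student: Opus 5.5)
The plan is to compare the two incidence bundles directly. The key observation is that both embeddings induce the same jet bundle $J^1(mH)$ on $Y$, and therefore the same source $\PP(J^1(mH)^*)$. Write $W=H^0(Y,\cO_Y(mH))^*$ for the ambient vector space of the complete embedding. Let $\mu:\operatorname{Sym}^mV^*\to H^0(Y,\cO_Y(mH))$ be the multiplication map, with dual $\mu^*:W\to\operatorname{Sym}^mV$. The evaluation maps fit into a commutative triangle: the evaluation $\operatorname{Sym}^mV^*\otimes\cO_Y\to J^1(mH)$ factors as $\mu\otimes 1$ followed by the complete evaluation $H^0(\cO_Y(mH))\otimes\cO_Y\to J^1(mH)$. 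Dualizing this triangle in the form \eqref{eq:jet-dual-sequence}, the inclusion $J^1(mH)^*\hookrightarrow\operatorname{Sym}^mV\otimes\cO_Y$ factors as the inclusion $J^1(mH)^*\hookrightarrow W\otimes\cO_Y$ followed by $\mu^*\otimes1$.

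From this factorization I would draw three consequences. First, the restriction of $\mu^*$ to each fibre $J^1(mH)^*_y\subset W$ is injective, because its composite with $\mu^*$ is the inclusion into $\operatorname{Sym}^mV$. Hence no point of $q_m^{\mathrm c}(\mathcal T)$ lies in $\PP(\ker\mu^*)$, which is the centre of $\pi$. This shows that $\pi$ is defined on the whole tangent variety $\Tan$ of the complete embedding. Second, the identity $q_m=\pi\circ q_m^{\mathrm c}$ holds pointwise and scheme-theoretically, since both sides are induced by the same sub-line-bundle $\cO(-1)\subset J^1(mH)^*\subset W\otimes\cO$ composed with $\mu^*$. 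Third, injectivity and fibre statements transfer from $q_m$ to $q_m^{\mathrm c}$. If $q_m^{\mathrm c}(a)=q_m^{\mathrm c}(b)$, then $q_m(a)=q_m(b)$. Consequently each fibre of $q_m^{\mathrm c}$ is contained in a fibre of $q_m$, and this remains true after any base field extension. So generic injectivity, the bound of at most two geometric points for $m=2$, and universal injectivity for $m\ge3$ all follow from Lemma~\ref{lem:veronese-incidence}.

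For the differential, I would apply the chain rule at a point $a\notin D$: $d q_m=d\pi\circ dq_m^{\mathrm c}$. Since $dq_m$ is injective at $a$, so is $dq_m^{\mathrm c}$. Here $\pi$ is a morphism on the open complement of its centre, which contains the image point, so $d\pi$ makes sense there. The distinguished section $D$ is defined intrinsically by the base point $y$, so it is the same for both morphisms.

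The only delicate step is the first one: identifying the complete evaluation map with the dual jet sequence and checking that $\mu$ is compatible with the jet evaluations. This is essentially functoriality of $J^1$ evaluation with respect to the linear map $\mu$, so I expect it to be routine once stated carefully. No projective normality is needed, because we only use that $\mu^*$ is injective on each tangent fibre, and this comes for free from the Veronese side.
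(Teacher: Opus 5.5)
Your proposal is correct and follows essentially the same route as the paper. You factor the Veronese jet evaluation through the multiplication map and dualize, so that $\mu^*$ is injective on each fibre of $J^1(mH)^*$; the centre of $\pi$ then misses every embedded tangent space, and $q_m=\pi\circ q_m^{\mathrm c}$ on the common source $\PP(J^1(mH)^*)$. The fibre bounds then transfer by fibre containment and the injectivity of the differential by the chain rule, and, as in the paper, no projective normality is needed.
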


\begin{proof}
The sections coming from $\operatorname{Sym}^mV^*$ separate first jets, since they define the embedding $\nu_m|_Y$.
Dualizing their evaluation on each jet fibre shows that the centre of $\pi$ misses each embedded tangent space and that $\pi$ maps it isomorphically onto the corresponding tangent space of $\nu_m(Y)$.
This identifies the incidence bundles and their sections $D$, and gives $q_m=\pi\circ q_m^{\mathrm c}$.
The factorization embeds each geometric fibre of $q_m^{\mathrm c}$ into the corresponding fibre of $q_m$.
Thus the fibre bounds, generic injectivity, and universal injectivity for $m\ge3$ pass to $q_m^{\mathrm c}$.
Injectivity of its differential away from $D$ follows from the chain rule.
No surjectivity of the multiplication map is required.
\end{proof}

\begin{proof}[Proof of Theorem~\ref{thm:veronese}]
By Lemmas~\ref{lem:veronese-incidence} and~\ref{lem:complete-incidence}, both tangent-incidence morphisms have finite fibres and are generically injective.
They are proper, hence finite, and generic injectivity makes them birational.
Their common source $\PP(J^1(mH)^*)$ is smooth, hence normal, of dimension $2n$.
Therefore each finite birational morphism is the normalization of its $2n$-dimensional image.

Suppose that $m\ge3$.
For either embedding, the set-theoretic inverse image of the embedded copy of $Y$ is precisely $D$, since every fibre has one geometric point and $D$ maps isomorphically onto that copy.
On the complement, the tangent-incidence morphism is finite, hence universally closed, and it is unramified and universally injective; it is therefore a closed immersion by \cite[Tag~04XV]{Stacks}.
Since it is also surjective onto the complement of the embedded copy of $Y$, it is an isomorphism there.

It remains to show singularity along the embedded copy of $Y$.
The failure of normality there is visible directly in the differential.
For either embedding, choose local holomorphic coordinates $z$ on $Y$ and write the embedding in an affine chart as $f(z)$.
Near $D$, the tangent-incidence morphism is $(z,v)\mapsto f(z)+df_z(v)$, with $v\in\CC^n$ and $D$ given by $v=0$.
Its differential there is $(a,b)\mapsto df_z(a+b)$, of rank $n<2n$.
If the target were normal at $f(z)$, the normalization would be a local isomorphism and its composite with the ambient embedding would have injective differential.
Thus the tangent variety is nonnormal, and hence singular, along the embedded copy of $Y$.
Together with smoothness on the complement, this proves the assertion about the reduced singular locus.
If $n\ge2$, this locus has codimension $n\ge2$.
A Cohen--Macaulay variety that is regular in codimension $1$ is normal by Serre's criterion \cite[Tag~031S]{Stacks}, so these tangent varieties are not Cohen--Macaulay.
\end{proof}

Since the tangent degree is $1$, the arbitrary-dimensional tangent-degree factorization computes the degree of the tangent variety itself. 

\begin{corollary}\label{cor:veronese-degree}
Let $Y$ and $m$ be as in Theorem~\ref{thm:veronese}. Then
\[
 \deg\Tan(\nu_m(Y))=\int_Y s_n\bigl(J^1(mH)^*\bigr),
\]
and the same formula holds for the complete embedding defined by $|mH|$.
\end{corollary}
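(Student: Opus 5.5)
The proof applies the arbitrary-dimensional analogue of the factorization \eqref{eq:tandegree}, that is, \cite[Lemma~3.2]{CDG}, to the embedding $\nu_m(Y)$, and then inserts the conclusion $\tdeg=1$ of Theorem~\ref{thm:veronese}.

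First set up the incidence bundle. Let $\mathcal T_m=\PP(J^1(mH)^*)$, a smooth projective variety of dimension $2n$, with projection $p$ and tautological class $\xi=c_1(\cO_{\mathcal T_m}(1))$. The dual jet sequence \eqref{eq:jet-dual-sequence} holds for the embedding $\nu_m$, with $V$ replaced by the linear span of $\nu_m(Y)$. This sequence identifies $\cO_{\mathcal T_m}(-1)$ with the pullback of the tautological line bundle, so $\xi=q_m^*h$, exactly as in Subsection~\ref{subsec:jet-incidence}. The same holds for $q_m^{\mathrm c}$, now with the complete space of sections. In both cases the source bundle is $J^1(mH)^*$, because the sections from $\operatorname{Sym}^mV^*$ already separate first jets.

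Next compute $\int\xi^{2n}$ in two ways.

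\emph{Via the projection $p$.} The bundle $J^1(mH)^*$ has rank $n+1$. By the projective-bundle formula \eqref{eq:segre-convention},
\[
p_*\xi^{2n}=p_*\xi^{(n+1)-1+n}=s_n\bigl(J^1(mH)^*\bigr).
\]
Integrating over $Y$ gives $\int_{\mathcal T_m}\xi^{2n}=\int_Y s_n(J^1(mH)^*)$.

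\emph{Via the morphism $q_m$.} By Theorem~\ref{thm:veronese}(i), $q_m$ is finite and birational onto $\Tan(\nu_m(Y))$, which therefore has dimension $2n$. The projection formula then gives
\[
\int_{\mathcal T_m}(q_m^*h)^{2n}=\deg(q_m)\cdot\int_{\Tan}h^{2n}=1\cdot\deg\Tan(\nu_m(Y)).
\]
This is the content of \cite[Lemma~3.2]{CDG} combined with $\tdeg=1$. Equating the two computations proves the formula.

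For the complete embedding, repeat the argument with $q_m^{\mathrm c}$, which is also finite and birational by Theorem~\ref{thm:veronese}. The source $\mathcal T_m$ and the class $\xi$ are the same, since Lemma~\ref{lem:complete-incidence} identifies the incidence bundles. One checks that $\xi$ is the pullback of the hyperplane class under both maps: $\pi$ is linear, and $q_m=\pi\circ q_m^{\mathrm c}$ with the centre of $\pi$ missing the tangent variety, so $\pi^*h$ restricted to $\Tan$ is the hyperplane class. Hence both degrees equal $\int_Y s_n(J^1(mH)^*)$. In particular, $\pi$ restricts to a finite birational morphism between the two tangent varieties that preserves degree.

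The main point requiring care is this identification of $\xi$ with $q^*h$ for both embeddings, including the non-linearly-normal one. Once the dual jet sequence is written for the relevant space of sections, it is formal. Everything else is the projection formula.
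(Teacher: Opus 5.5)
Your proposal is correct and follows the paper's proof. The paper likewise takes the dimension $2n$ and the birationality of both tangent-incidence morphisms from Theorem~\ref{thm:veronese} and then invokes the factorization $\int\xi^{2n}=\tdeg\cdot\deg\Tan$ of \cite[Lemma~3.2]{CDG}; you have simply unpacked that factorization through the projective-bundle formula and the projection formula, and your extra check that $\xi$ pulls back the hyperplane class also follows directly from the dual jet sequence for the complete system.
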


\begin{proof}
By Theorem~\ref{thm:veronese}, the tangent variety has dimension $2n$ and the tangent-incidence morphism is birational.
The assertion is then the factorization recalled in Subsection~\ref{subsec:tangent-degree-background}; see \cite[Lemma~3.2]{CDG}.
\end{proof}


\subsection{Multiples of Calabi--Yau polarizations}\label{subsec:multiples-CY}

\begin{corollary}\label{cor:multiples-tau}
Let $(X,H)$ be a very amply polarized Calabi--Yau threefold.
For every integer $m\ge2$, the complete embedding defined by $|mH|$ satisfies
\[
 \tdeg(X,mH)=1,\qquad
 \deg\Tan(X,mH)=20m^3d-6mc-e,
\]
and its tangent-incidence morphism is the normalization.
For $m\ge3$, the reduced singular locus of $\Tan(X,mH)$ is the embedded copy of $X$, and $\Tan(X,mH)$ is not Cohen--Macaulay.
\end{corollary}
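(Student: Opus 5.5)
The corollary will follow by applying Theorem~\ref{thm:veronese} and Corollary~\ref{cor:veronese-degree} with $Y=X$ and $n=3$, and then evaluating the resulting Segre integral with the Calabi--Yau jet-bundle formulas of Subsection~\ref{subsec:tangential-gap}.

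First, we check the hypotheses. We take $Y=X\subset\PP(V)$ embedded by $|H|$, which is smooth, irreducible, of dimension $3$, and not a linear subspace, since $K_X\sim0$. Theorem~\ref{thm:veronese}(i) then says that the tangent-incidence morphism of the complete embedding defined by $|mH|$ is finite and birational onto its $6$-dimensional image. Hence it is the normalization, and $\tdeg(X,mH)=1$. For $m\ge3$, parts (iii) and (iv) of Theorem~\ref{thm:veronese} with $n=3\ge2$ give the description of the reduced singular locus and the failure of the Cohen--Macaulay property.

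Second, we compute the degree. By Corollary~\ref{cor:veronese-degree}, $\deg\Tan(X,mH)=\int_Xs_3(J^1(mH)^*)$. The computation leading to \eqref{eq:jetchern} and \eqref{eq:CYsegre} used only $c_1(X)=0$ and the jet sequence for an arbitrary divisor, so we may replace $H$ by $mH$. Directly, $J^1(mH)^*$ has total Chern class $c(T_X(-mH))\,(1-mH)$, and $s(J^1(mH)^*)$ is its inverse. Equivalently, we substitute $mH$ for $H$ in \eqref{eq:CYsegre}. The term $20H^3$ becomes $20m^3H^3$, the term $6c_2(X)H$ becomes $6m\,c_2(X)H$, and $c_3(X)$ is unchanged. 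Integrating gives $20m^3d-6mc-e$.

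The main point needing care is this substitution. Formula \eqref{eq:CYsegre} was derived in the setting where $H$ is a hyperplane class of a given embedding. We must make sure that it is a purely Chern-class identity, valid for any line bundle $L$ on a threefold with $c_1(X)=0$. That is, we need $c(J^1(L))=c(\Omega_X^1(L))(1+L)$, and then $s_3(J^1(L)^*)=20L^3-6c_2(X)L-c_3(X)$. We will verify this by expanding $c(\Omega^1_X\otimes L)$ for rank $3$ with $c_1(\Omega_X^1)=0$. This gives $c_1(J^1(L))=4L$, $c_2(J^1(L))=6L^2+c_2(X)$, and $c_3(J^1(L))=4L^3+2c_2(X)L-c_3(X)$. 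We then invert $c(J^1(L)^*)$ up to degree $3$ to recover the stated coefficients. As a consistency check, $m=1$ returns $\omega(X,H)$.
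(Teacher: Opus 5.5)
Your proposal is correct and follows the paper's proof. You apply Theorem~\ref{thm:veronese} with $n=3$, using that a Calabi--Yau threefold is not linear, and then read off the degree as $\int_X s_3(J^1(mH)^*)$ with $mH$ in place of $H$; this is \eqref{eq:CYfactor} combined with $\tdeg=1$, or equivalently Corollary~\ref{cor:veronese-degree}. Your extra check that \eqref{eq:CYsegre} is a pure Chern-class identity is right but unnecessary: $mH$ is itself the hyperplane class of the complete $|mH|$-embedding, so Subsection~\ref{subsec:tangential-gap} applies verbatim.
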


\begin{proof}
A Calabi--Yau threefold is not a projective linear space, so Theorem~\ref{thm:veronese} applies with $n=3$ and gives a $6$-dimensional tangent variety with tangent degree $1$.
Formula~\eqref{eq:CYfactor}, with $mH$ in place of $H$, then gives the degree formula.
\end{proof}

The distinction between $m=2$ and $m\ge3$ is sharp, already for the Fermat quintic threefold.

\begin{example}\label{ex:fermat-quintic}
Let
\[
 X=\{x_0^5+x_1^5+x_2^5+x_3^5+x_4^5=0\}\subset\PP^4
\]
be the Fermat quintic threefold, and let $H$ be its hyperplane divisor.
It contains the line
\[
 \ell=\{[s:-s:t:-t:0]\mid[s:t]\in\PP^1\}. 
\]
Let $\varphi_2:X\hookrightarrow\PP^{14}$ be the complete embedding defined by $|2H|$, with tangent-incidence morphism $q_2^{\mathrm c}$.
The restrictions of ambient quadrics span $H^0(\ell,\cO_\ell(2))$, so $C=\varphi_2(\ell)$ is a nonsingular conic spanning a plane $\Pi$.
For a general $z\in\Pi\setminus C$, the two tangent lines to $C$ through $z$ lie in the corresponding tangent spaces to $\varphi_2(X)$.
Thus the fibre $(q_2^{\mathrm c})^{-1}(z)$ has at least two geometric points, and exactly two by Theorem~\ref{thm:veronese}.
Moreover, $z\notin\varphi_2(X)$: otherwise its own tangent space would supply a third point of the fibre.
Since $q_2^{\mathrm c}$ is the normalization, $\Tan(\varphi_2(X))$ is nonnormal, hence singular, at $z$.
The bound of two geometric points for $m=2$ is therefore attained, and the conclusions about bijectivity and the reduced singular locus cannot be extended to $m=2$.
The tangent degree remains $1$ by Theorem~\ref{thm:veronese}(i).
Remark~\ref{rem:lines-give-B2} below shows that this mechanism operates whenever $Y$ contains a line.
\end{example}

\begin{remark}\label{rem:syzygy-threshold}
Previous syzygetic results give tangent birationality for $m\ge4$.
Indeed, Theorem~2.4 of \cite{GP}, applied to $\cO_X(H)$ with $p=2$, gives property $N_2$ for the complete embedding defined by $|mH|$ when $m\ge4$.
Together with the criterion in \cite[Example~29]{HGR}, this also proves tangent birationality in that range.
The Veronese argument improves the threshold to $m=2$ under the very-ampleness hypothesis and does not use higher syzygies.
The nonlinearity hypothesis for $m=2$ is necessary: exchanging the two distinct linear factors gives $\tdeg(\nu_2(\PP^n))=2$, as in Remark~\ref{rem:veronese}.
For $m\ge3$, the theorem applies to $\PP^n$ as well.
\end{remark}


\subsection{The quadratic case}\label{subsec:quadratic-tangent-singularities}

For $m=2$, the only additional singularities arise from double fibres of the normalization.
For either the Veronese or the complete embedding, write $Y_2$ for the image and $q_2$ for its tangent-incidence morphism.
Let $B_2\subset\Tan(Y_2)$ be the set of points whose fibres have two geometric points.

\begin{proposition}\label{prop:quadratic-singular}
Let $Y$ be as in Theorem~\ref{thm:veronese} with $m=2$; in particular $Y$ is nonlinear.
Then $B_2\cap Y_2=\varnothing$ and $\operatorname{Sing}(\Tan(Y_2))_{\mathrm{red}}=Y_2\cup B_2$ as sets.
Every point of this set is nonnormal.
\end{proposition}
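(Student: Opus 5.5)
The plan is to combine the structure already established in Theorem~\ref{thm:veronese} and Lemmas~\ref{lem:veronese-incidence}--\ref{lem:complete-incidence}. By these, $q_2$ is finite and birational from the smooth $2n$-fold $\mathcal T_2=\PP(J^1(2H)^*)$, hence the normalization of $\Tan(Y_2)$. Its differential is injective away from the section $D$, and its fibres have at most two geometric points. The proof splits into three steps.

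\textbf{Step 1: $B_2\cap Y_2=\varnothing$.} Take a point $\nu_2(y)=[x^2]\in Y_2$ and suppose $([x'],[u'])$ lies in the fibre over it. Then $x'u'$ is proportional to $x^2$. Unique factorization forces $[x']=[u']=[x]$, so the fibre is the single point of $D$ over $y$. For the complete embedding, Lemma~\ref{lem:complete-incidence} embeds its fibres into those of the Veronese morphism, so the same conclusion holds there.

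\textbf{Step 2: the inclusion $Y_2\cup B_2\subseteq$ nonnormal locus.} Along $Y_2$, the differential computation in the proof of Theorem~\ref{thm:veronese} applies verbatim for every $m$, including $m=2$: near $D$ the map is $(z,v)\mapsto f(z)+df_z(v)$, whose differential has rank $n<2n$. Hence $\Tan(Y_2)$ is nonnormal at each point of $Y_2$. At a point $z\in B_2$, the normalization $q_2$ has a fibre with two geometric points. If $\Tan(Y_2)$ were normal at $z$, then $q_2$ would be an isomorphism over a neighbourhood of $z$ (by Zariski's main theorem for finite birational maps onto a normal target), so its fibre would be a single point, a contradiction. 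Thus every point of $Y_2\cup B_2$ is nonnormal and, in particular, singular.

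\textbf{Step 3: smoothness of the complement.} Let $U=\Tan(Y_2)\setminus(Y_2\cup B_2)$. By Step~1 and the definition of $B_2$, the preimage $q_2^{-1}(Y_2)$ is exactly $D$, so $q_2^{-1}(U)$ lies in $\mathcal T_2\setminus D$. Over $U$, the morphism $q_2$ is finite and its fibres have exactly one geometric point, so it is universally injective over $U$. It is also unramified, because its differential is injective off $D$. Exactly as in the $m\ge3$ argument, \cite[Tag~04XV]{Stacks} shows that $q_2^{-1}(U)\to U$ is a closed immersion. Since it is also surjective onto $U$, it is an isomorphism, and so $U$ is smooth. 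Combining this with Step~2 gives $\operatorname{Sing}(\Tan(Y_2))_{\mathrm{red}}=Y_2\cup B_2$ as sets, with every such point nonnormal.

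The main obstacle I expect is in Step~3. Universal injectivity over $U$ requires that geometric fibres over points of $U$ remain single points after base change, not just over closed points. I would handle this by observing that the unique-factorization argument of Lemma~\ref{lem:veronese-incidence} works over any algebraically closed field. I also need to check that $B_2$ is closed, so that $U$ is open. For this I would argue that $Y_2\cup B_2$ is the image of the closed subset of $\mathcal T_2$ where $q_2$ fails to be a local isomorphism onto its image. Alternatively, I would show directly that it is the non-isomorphism locus of the finite map $q_2$, which is closed by properness.
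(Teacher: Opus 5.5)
Your Steps~1 and~2 are the paper's argument. Unique factorization gives $q_2^{-1}(Y_2)=D$ with one-point fibres over $Y_2$. The rank-$n$ differential along $D$ gives nonnormality along $Y_2$, and two-point normalization fibres give nonnormality on $B_2$.

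Step~3 differs from the paper, and it has a gap that you flagged but did not close. The criterion of Tag~04XV applies to $q_2^{-1}(U)\to U$ only once $U$ is known to be an open subscheme, i.e.\ once $Y_2\cup B_2$ is known to be closed.
\begin{itemize}
\item Your first formulation of what is needed, that $B_2$ be closed, is false in general. If $Y$ contains a line $\ell$, Remark~\ref{rem:lines-give-B2} shows that $B_2$ contains the plane spanned by the conic $\nu_2(\ell)$ with that conic removed. So $\overline{B_2}$ meets $Y_2$, although $B_2\cap Y_2=\varnothing$.
\item Your proposed fix, identifying $Y_2\cup B_2$ with the non-isomorphism locus of $q_2$, is circular. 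The inclusion of that locus in $Y_2\cup B_2$ says exactly that $q_2$ is an isomorphism near every point off $Y_2\cup B_2$, which is what Step~3 is meant to prove.
\item The alternative description (``where $q_2$ fails to be a local isomorphism onto its image'') is not made precise in the Zariski topology.
\end{itemize}

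The missing step is local, and it is how the paper argues. Take a closed point $z\notin Y_2\cup B_2$. Its fibre is a single point at which $q_2$ is unramified, so the scheme-theoretic fibre is $\operatorname{Spec}\CC$. Since $q_2$ is finite, Nakayama's lemma shows that the coherent cokernel of $\cO_{\Tan(Y_2)}\to(q_2)_*\cO_{\PP(J^1(2H)^*)}$ vanishes on a neighbourhood of $z$. The map is injective because $q_2$ is birational and $\Tan(Y_2)$ is reduced. Hence $q_2$ is an isomorphism over a neighbourhood of $z$, so $z$ is a smooth point.

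The same argument gives the openness of $U$ at no extra cost. By Nakayama, the closed points of the support of that cokernel are exactly those whose fibre is not $\operatorname{Spec}\CC$. These are $Y_2$ (one ramified point) and $B_2$ (two points), so $Y_2\cup B_2$ is closed. Once you argue this way, neither Tag~04XV nor your discussion of universal injectivity over non-closed points is needed.
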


\begin{proof}
Unique factorization shows that $q_2^{-1}(Y_2)=D$ set-theoretically for the Veronese embedding, and hence also for the complete embedding by projection.
Each such fibre has one geometric point, proving $B_2\cap Y_2=\varnothing$.
The differential calculation along $D$ in the proof of Theorem~\ref{thm:veronese} applies equally to $m=2$, and shows nonnormality along $Y_2$.
Points of $B_2$ are nonnormal because their normalization fibres have two points.

At any remaining point $z$, the fibre is a singleton and $q_2$ is unramified at its point by Lemmas~\ref{lem:veronese-incidence} and~\ref{lem:complete-incidence}.
Thus its scheme-theoretic fibre is a single reduced point.
For the finite morphism $q_2$, Nakayama's lemma makes $\cO_{\Tan(Y_2)}\to(q_2)_*\cO_{\PP(J^1(2H)^*)}$ surjective near $z$.
It is also injective, since $q_2$ is birational and the target is reduced.
Hence $q_2$ is an isomorphism near $z$, which is therefore smooth.
\end{proof}

\begin{remark}\label{rem:lines-give-B2}
If $Y$ contains a line $\ell$, then $B_2\ne\varnothing$.
Indeed, $\ell\subset T_{[x]}Y$ for every $[x]\in\ell$, so for distinct $[x],[u]\in\ell$ both $([x],[u])$ and $([u],[x])$ are incidence pairs over $[xu]$, while $xu$ is not a square, so $[xu]\notin Y_2$.
As $[x]$ and $[u]$ vary, the points $[xu]$ fill the plane $\PP(\operatorname{Sym}^2\langle x,u\rangle)$.
Hence $\dim B_2\ge2$, and by Proposition~\ref{prop:quadratic-singular} the tangent variety is singular along a surface disjoint from $Y_2$.
Example~\ref{ex:fermat-quintic} is the case of a line on a quintic threefold.
\end{remark}

We now turn from these results for multiples to the original stable-range embedding.


\section{Stable-range tangent birationality}\label{sec:birationality}

Corollary~\ref{cor:multiples-tau} proves tangent birationality for every $mH$ with $m\ge2$.
We ask whether it also holds for $H$ itself when the complete embedding lies in the stable range.

\begin{conjecture}\label{conj:CYtau}
Let $(X,H)$ be a very amply polarized Calabi--Yau threefold whose complete linear system embeds $X$ in $\PP^N$ with $N\ge7$.
Then the tangent-incidence morphism $q:\PP(J^1(H)^*)\to\Tan(X)$ is birational, equivalently $\tdeg(X)=1$.
\end{conjecture}
In that case, by \eqref{eq:CYfactor}, the conjecture predicts
\[
 \deg\Tan(X)=20d-6c-e.
\]

\begin{remark}
The restriction $N\ge7$ is essential: Theorem~\ref{thm:P6exact} shows that $\tdeg(X)=1$ is impossible in $\PP^6$.
Thus the critical case and the stable range exhibit different tangent-degree behaviour.

Tangent birationality does not hold for arbitrary smooth threefolds: the quadratic Veronese threefold $\nu_2(\PP^3)\subset\PP^9$ has $\tdeg=2$ (Remark~\ref{rem:veronese}).
Even secant nondefectivity is insufficient: for integer $b\ge2$, there are smooth linearly normal Roth threefolds in $\PP^N$, $N\ge7$, with $\dim\Sec(X)=7$ and $\tdeg(X)=b^2$ \cite[Lemma~33 and Theorem~35]{HGR}.
Thus neither smoothness nor secant nondefectivity alone implies tangent birationality, and whether such behaviour can occur on a Calabi--Yau threefold is precisely the content of Conjecture~\ref{conj:CYtau}.
\end{remark}

The numerical criterion in Corollary~\ref{cor:numerical-tau} gives one sufficient condition, but it need not hold even when the tangent degree is $1$.
For example, a general complete intersection of four quadrics has $\omega=64>4(N-5)=8$, yet Theorem~\ref{thm:2222} proves tangent birationality.
We record two geometric criteria for cases beyond this numerical test.

For a smooth irreducible nondegenerate threefold $X\subset\PP^N$, $N\ge7$, and a general smooth point $w\in\Tan(X)$, 
define the tangential contact locus $\Gamma_w=\{x\in X\mid T_xX\subset T_w\Tan(X)\}$. 
This differs from the fibre of the tangent-incidence morphism, whose points satisfy $w\in T_xX$.
The latter fibre is finite for general $w$ by Lemma~\ref{lem:lowcodim}; the contact locus can be positive-dimensional.

\begin{proposition}\label{prop:tauone}
Let $X\subset\PP^N$ be a smooth irreducible nondegenerate threefold with $N\ge7$.
If $\dim\Gamma_w=0$ for general $w$, then $\tdeg(X)=1$, and hence
\[
 \int_Xc_3\bigl(N_{X/\PP^N}(-H)\bigr)=\deg\Tan(X).
\]
\end{proposition}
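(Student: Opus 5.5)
The plan is to use Terracini-type reasoning on the tangent variety. By Lemma~\ref{lem:lowcodim}, $N\ge7$ gives $\dim\Tan(X)=6$, so $q:\mathcal T\to\Tan(X)$ is generically finite. Let $w\in\Tan(X)$ be a general point; it is smooth on $\Tan(X)$ and $q$ is étale over it. We must show that the fibre $q^{-1}(w)$ consists of a single point $(x,w)$. The displayed identity then follows at once from \eqref{eq:classical-tangent} and \eqref{eq:tandegree}, since $\tdeg(X)=1$.

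The key step is to compute the embedded tangent space of $\Tan(X)$ at $w=q(x,w)$ for a point $(x,w)$ of the fibre. This is the analogue of Terracini's lemma for tangent varieties. Choose a local parametrization $f(z)$ of $X$ near $x$, and write $q(z,v)=f(z)+df_z(v)$ as in the proof of Theorem~\ref{thm:veronese}. The differential is
\[
 (a,b)\mapsto df_z(a)+d^2f_z(v,a)+df_z(b).
\]
Its span therefore always contains $\widehat{T_xX}$, the span of $f(z)$ and the $\partial_if(z)$. Since $q$ is étale at $(x,w)$, this span is the full cone $\widehat{T_w\Tan(X)}$. Hence $T_xX\subset T_w\Tan(X)$, i.e. $x\in\Gamma_w$.

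It follows that every point of $q^{-1}(w)$ projects to a point of $\Gamma_w$. Suppose $\tdeg(X)\ge2$. Consider the family over an open set of $\Tan(X)$ consisting of pairs $(w,x)$ with $x\in p(q^{-1}(w))$. We want to show that these points move in a positive-dimensional family inside a single contact locus.

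This step is the main obstacle. One approach is to use the tangent space description more carefully. Since $T_w\Tan(X)=\langle T_xX, d^2f_x(v,\cdot)\rangle$ is $6$-dimensional, it is constant along the line $\langle x,w\rangle$ and also along nearby curves. Another approach is the argument of \cite{HGR}, which reduces to a general point and studies how the fibre points vary with $w$.

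Concretely, I would first try to show the following. If $(x_1,w)$ and $(x_2,w)$ are two fibre points, then moving $w$ along the line $\overline{x_1w}\subset T_{x_1}X$ keeps $T_w\Tan(X)$ fixed. This holds because the tangent space along a ruling of $\Tan(X)$ is constant, as the second fundamental form term is linear in $v$. It must also keep $T_{x_2}X\subset T_w\Tan(X)$. One then produces, via the monodromy of the second sheet, a curve of points $x$ with $T_xX$ contained in the fixed $\PP^6$. That would contradict $\dim\Gamma_w=0$.

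If this fails, I would fall back on the contrapositive proved in \cite{HGR}: tangent degree $>1$ forces a positive-dimensional tangential contact locus. That is the route I expect the paper to take, with the remaining work being to verify the hypotheses (smoothness, nondegeneracy, and $\dim\Tan=6$ via Lemma~\ref{lem:lowcodim}).
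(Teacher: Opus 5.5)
Your fallback is exactly the paper's proof. Lemma~\ref{lem:lowcodim} makes $q$ generically finite, \cite[Proposition~38]{HGR} shows that $\tdeg(X)\ge2$ would force a curve in $\Gamma_w$ for general $w$, contradicting the hypothesis, and the identity then follows from \eqref{eq:classical-tangent} and \eqref{eq:tandegree}.

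The self-contained Terracini route you sketch first is not needed, and as written it does not close. It only shows that the finitely many fibre points over $w$ lie in $\Gamma_w$, which is compatible with $\dim\Gamma_w=0$. Sliding $w$ along $\overline{x_1w}$, where the tangent space of $\Tan(X)$ is indeed constant, gives two alternatives. If the other fibre points move, you get the desired curve in $\Gamma_w$. If they stay fixed, you only learn that $\overline{x_1w}\subset T_{x_2}X$. Excluding that second alternative is the nontrivial content you would be importing from \cite{HGR}.
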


\begin{proof}
By Lemma~\ref{lem:lowcodim}, the tangent-incidence morphism is generically finite.
If its degree is at least $2$, \cite[Proposition~38]{HGR} gives a curve in $\Gamma_w$ for general $w$, contradicting the hypothesis.
The degree identity then follows from \eqref{eq:tandegree}.
\end{proof}

The contact-locus criterion is useful conceptually, whereas explicit families are more naturally handled by the double-tangent locus
\[
 B_X=\overline{\bigcup_{x\ne y}(T_xX\cap T_yX)}\subset\Tan(X),
\]
where the bar denotes Zariski closure and $x,y$ range over $X$.

\begin{proposition}\label{prop:double-tangent}
For a smooth irreducible nondegenerate threefold $X\subset\PP^N$ with $N\ge7$, one has
\[
 \tdeg(X)=1\quad\Longleftrightarrow\quad\dim B_X<6.
\]
\end{proposition}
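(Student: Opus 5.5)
We prove both implications by working on the incidence sixfold $\mathcal T=\PP(J^1(H)^*)$ and its morphism $q:\mathcal T\to\Tan(X)$. Recall that $q$ is generically finite because $N\ge7$ (Lemma~\ref{lem:lowcodim}), so $\dim\Tan(X)=6$ and $\tdeg(X)$ is defined. We introduce the incidence locus
\[
 \mathcal B=\overline{\{((x,z),(y,z))\in\mathcal T\times_{\PP^N}\mathcal T \mid x\ne y\}},
\]
the closure of the complement of the diagonal in the fibre product. Its image in $\PP^N$ is exactly $B_X$: a point $z$ lies in $T_xX\cap T_yX$ with $x\ne y$ precisely when $q^{-1}(z)$ contains two points with distinct base points. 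The plan is to relate $\dim B_X$ to the generic cardinality of the fibres of $q$.

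\emph{($\Leftarrow$)} Suppose $\dim B_X<6$. Then a general $z\in\Tan(X)$ lies outside $B_X$. Hence every pair of points in $q^{-1}(z)$ has the same base point $x$. But for fixed $x$, the fibre of $\mathcal T$ over $x$ maps isomorphically onto the linear space $T_xX$, since $J^1(H)^*\subset V\otimes\cO_X$ is a subbundle. So $q^{-1}(z)$ is a single point. We are in characteristic zero, so $q$ is generically smooth onto its image, and the degree equals the number of points of a general fibre. Thus $\tdeg(X)=1$.

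\emph{($\Rightarrow$)} Suppose $\tdeg(X)=1$, so $q$ is birational onto $\Tan(X)$. Suppose for contradiction that $\dim B_X=6$, so $B_X=\Tan(X)$ by irreducibility. Then for general $z\in\Tan(X)$ there exist $x\ne y$ with $z\in T_xX\cap T_yX$. This produces two distinct points $(x,z)$ and $(y,z)$ of $q^{-1}(z)$. A general fibre of a birational morphism in characteristic zero is a single reduced point. More precisely, $q$ is an isomorphism over a dense open subset $U\subset\Tan(X)$, and $U$ meets the dense set of such $z$, which is a contradiction.

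The main point requiring care is this last step. Since $B_X$ is defined as a closure, "general point of $B_X$" must be compared with "general point of $\Tan(X)$". The argument is that the set of $z$ with at least two fibre points is the image of a constructible set. If this set has closure of dimension $6$, it is dense constructible and therefore contains a dense open subset of $\Tan(X)$, which then meets $U$.

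A second check is that the fibre isomorphism over $x$ prevents two fibre points from sharing a base point. This holds because the fibre of $\mathcal T$ over $x$ is embedded linearly as $T_xX$.

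No Calabi--Yau hypothesis is used; only smoothness, nondegeneracy and $N\ge7$ enter, the last through generic finiteness.
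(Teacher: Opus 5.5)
Your proof is correct and follows essentially the same route as the paper: both directions identify $B_X$ with the closure of the locus of points of $\Tan(X)$ that have more than one preimage under the generically finite map $q$. Your version fills in details the paper leaves implicit: Chevalley constructibility for the forward direction, injectivity of $q$ on each fibre $p^{-1}(x)$, and generic \'etaleness in characteristic zero. Counting fibre points $(x,z)$ rather than distinct tangent spaces also sidesteps the paper's loose phrasing about ``two distinct tangent spaces''.
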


\begin{proof}
The tangent-incidence morphism is proper and generically finite onto its $6$-dimensional image by Lemma~\ref{lem:lowcodim}.
If $\tdeg(X)=1$, a general point of $\Tan(X)$ lies on a unique tangent space.
Hence the locus of points lying on two distinct tangent spaces is not dense in the irreducible sixfold $\Tan(X)$, and therefore $\dim B_X<6$.
Conversely, if $\dim B_X<6$, then a general point of $\Tan(X)$ lies outside $B_X$ and hence on a unique tangent space, so $\tdeg(X)=1$.
\end{proof}

Thus, in explicit families, it suffices to prove that the incidence of triples $(x,y,w)$ with $x\ne y$ and $w\in T_xX\cap T_yX$ has dimension at most $5$.

Several unconditional constraints support Conjecture~\ref{conj:CYtau}.
By Corollary~\ref{cor:numerical-tau}, it holds whenever $\omega(X,H)<4(N-5)$, and by Corollary~\ref{cor:multiples-tau} it holds for every polarization $mA$ with $m\ge2$ and $A$ very ample.
In general, Corollary~\ref{cor:tau-uniform} bounds the tangent degree by $240$ in the stable range.
A counterexample would therefore satisfy $2\le\tdeg(X)\le240$ and $\omega(X,H)\ge4(N-5)$, its polarization would admit no very ample proper root, and it would carry a positive-dimensional tangential contact locus by Proposition~\ref{prop:tauone} and a $6$-dimensional double-tangent locus by Proposition~\ref{prop:double-tangent}.


\section{Evidence for stable-range tangent birationality}
\label{sec:stableexamples}

We test Conjecture~\ref{conj:CYtau} in several explicit families.
The first two families are treated by bounding the double-tangent locus through universal incidence calculations; the weighted examples use a combination of incidence and syzygetic methods.
In the incidence proofs, we stratify over an irreducible parameter space.
On every stratum that dominates that space, the general fibre has dimension equal to the difference of dimensions; strata that do not dominate are absent over a dense open subset.
Since there are finitely many strata, the bounds hold simultaneously for general parameters.


\subsection{Complete intersections of four quadrics}

Among ordinary complete intersections, $X_{2,2,2,2}\subset\PP^7$ is the only Calabi--Yau threefold type whose hyperplane embedding lies in the stable range.
By Proposition~\ref{prop:conormal}, it also accounts for every quadratically defined Calabi--Yau threefold in $\PP^7$.

\begin{theorem}\label{thm:2222}
Let $X\subset\PP^7$ be a general complete intersection of four quadrics.
Then
\[
 \tdeg(X)=1,
 \qquad
 \deg\Tan(X)=64.
\]
\end{theorem}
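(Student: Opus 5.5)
By Example~\ref{ex:ci-tangent}, $\omega(X,H)=64$ for every smooth member, and $N=7\ge7$. So by \eqref{eq:CYfactor} it suffices to prove $\tdeg(X)=1$; then $\deg\Tan(X)=64$ follows. To prove birationality I would use Proposition~\ref{prop:double-tangent}: it suffices to show that the incidence
\[
 I_X=\{(x,y,w)\mid x\ne y\in X,\ w\in T_xX\cap T_yX\}
\]
has dimension at most $5$ for a general complete intersection $X$. Following the strategy announced at the start of Section~\ref{sec:stableexamples}, I would compute this over the parameter space rather than for a fixed $X$.

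The universal incidence will be built over ordered pairs of distinct points $(x,y)\in\PP^7\times\PP^7$ and $4$-dimensional linear subspaces $\Lambda\subset H^0(\PP^7,\cO(2))$. For a quadric $Q$ with symmetric form $B_Q$, the condition $x\in Q$ reads $B_Q(x,x)=0$. The embedded tangent space $T_xX$ is $\bigcap_{Q\in\Lambda}\{z\mid B_Q(x,z)=0\}$, cut out by four linear forms $B_{Q_i}(x,\cdot)$.

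I would stratify by $k=\dim(T_xX\cap T_yX)+1$, the dimension of the affine intersection. Intersecting $T_xX\cap T_yX$ requires the eight linear forms $B_{Q_i}(x,\cdot)$ and $B_{Q_i}(y,\cdot)$ on $\CC^8$ to have rank $8-k$. The line $\overline{xy}$ is important here: $T_xX\cap T_yX$ contains a point of $\overline{xy}$ only under extra conditions. Generically both tangent spaces are $3$-planes in $\PP^7$, so the expected intersection is empty. Nonempty intersection is a rank drop of an $8\times8$ matrix, which is codimension $1$; for $k$-dimensional intersection the drop is codimension $k^2$. The count is then as follows. Pairs $(x,y)$ on $X$ form a $6$-dimensional family. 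The condition $k\ge1$ imposes $1$ condition, generically, and the fibre $\PP^{k-1}$ contributes $k-1$. This gives $\dim I_X\le\max_k(6-k^2+k-1)=5$.

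To make the count rigorous, I would fix $(x,y)$ and analyse the linear conditions on $\Lambda$. The conditions $B_Q(x,x)=B_Q(y,y)=0$ are independent linear conditions on the $36$-dimensional space of quadrics, and the map $Q\mapsto(B_Q(x,\cdot),B_Q(y,\cdot))$ from quadrics vanishing at $x$ and $y$ is understood explicitly. In coordinates $x=e_0$ and $y=e_1$, the forms are the rows $0$ and $1$ of the symmetric matrix with $a_{00}=a_{11}=0$. The only shared entry is $a_{01}$, which gives the relation that the value of the row-$0$ form at $y$ equals the value of the row-$1$ form at $x$. Using this, I would show that for four general quadrics through $x$ and $y$, the $8\times8$ matrix has corank $\ge k$ in codimension $\ge k^2$ in the Grassmannian of $\Lambda$'s through $x,y$, apart from the forced structure along $\overline{xy}$, which I would treat separately.

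That exceptional stratum is the main obstacle: pairs with $\overline{xy}\subset X$, which I would handle via lines on $X$, and degenerate pairs where the shared entry forces extra rank drop. Lines on a general $X_{2,2,2,2}$ form a $1$-dimensional family. For $x,y$ on a line $\ell$, the intersection $T_xX\cap T_yX$ contains $\ell$, so the stratum has dimension at most $1+1+1+1$, counting $\ell$, $x$, $y$ and $w\in\ell$, plus any additional intersection. I expect this to stay below $6$, but verifying the generic-rank statement uniformly across strata by dimension counting, via the projection from the universal incidence to the parameter space, is where most of the care is needed.
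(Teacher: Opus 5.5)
Your reduction is the same as the paper's. You get $\omega(X,H)=64$ from Example~\ref{ex:ci-tangent}, apply Proposition~\ref{prop:double-tangent}, and bound the double-tangent locus by a universal dimension count over the $128$-dimensional Grassmannian $\Gr(4,\operatorname{Sym}^2V^*)$ of $4$-dimensional spaces of quadrics. You diverge in how you parametrize the universal incidence, and that is also where your proposal stops. The codimension estimate for the corank strata is only announced (``I would show''), yet it is the entire content of the proof.

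The paper avoids any determinantal analysis by fixing the full triple $(x,y,w)$ rather than the pair $(x,y)$. Then $x,y\in X_U$ and $w\in T_xX_U\cap T_yX_U$ simply say that every $q\in U$ satisfies the linear conditions $q(x)=q(y)=B_q(x,w)=B_q(y,w)=0$. With $x=e_0$, $y=e_1$, these are $a_{00}=a_{11}=a_{02}=a_{12}=0$ for $w=e_2$, which has rank $4$. When $w\in\langle x,y\rangle$ they collapse to $a_{00}=a_{11}=a_{01}=0$, which has rank $3$. Hence the universal incidence has dimension at most $\max\{21+4\cdot28,\ 15+4\cdot29,\ 14+4\cdot29\}=133$. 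The fibre over a general point of the Grassmannian therefore has dimension at most $5$. Lines on $X$ are absorbed into the collinear stratum and need no separate argument.

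If you want to keep your pair-based stratification, it can be completed as follows. You only need the locus of $(x,y,\Lambda)$ where $T_xX_\Lambda\cap T_yX_\Lambda$ has affine dimension $\ge k$ to have codimension $\ge k$ in the $134$-dimensional space of such data, because the fibre $\PP^{k-1}$ adds only $k-1$. This already follows from the triple count.

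Your sharper claim of codimension $k^2$ is true when $b=(B_{Q_i}(x,y))_i\neq0$. Write the eight forms as $(0,b_i,R_i)$ and $(b_i,0,S_i)$ with $R_i,S_i\in\CC^6$. A kernel vector is then determined by its last six coordinates. So the kernel corresponds to a $k$-plane $W'\subset\CC^6$ on which the $4\times6$ matrices $R$ and $S$ both take values in $\CC b$, and this costs $6k-k(6-k)=k^2$ conditions.

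The claim fails, however, on the stratum $b=0$, which is exactly the case $\overline{xy}\subset X$. There the corank is $2$ plus the nullity of the $8\times6$ matrix obtained by stacking $R$ over $S$. So corank $3$ occurs in codimension $4+3=7$, not $9$. This still exceeds $k$, so your final bound survives, but the statement as written needs this correction.

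Finally, a general $X_{2,2,2,2}$ contains only finitely many lines, since the expected dimension is $12-4\cdot3=0$; it does not contain a one-dimensional family. The overestimate is harmless, but your ``$1+1+1+1$ plus any additional intersection'' is not yet a bound. The clean argument is that such $w$ lie in $T_xX$ with $x$ on one of the lines, which is a set of dimension at most $1+3$.
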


\begin{proof}
Let $V$ be an $8$-dimensional vector space and let $\mathcal G=\Gr\bigl(4,\operatorname{Sym}^2V^*\bigr)$ parametrize $4$-dimensional vector spaces of quadrics in $\PP(V)$; thus
$\dim\mathcal G=128$.
For a quadratic form $q$, write $B_q(u,v)=(q(u+v)-q(u)-q(v))/2$ for its polar bilinear form.
Evaluations at projective points use homogeneous lifts; the vanishing conditions below are independent of their choice.
Given distinct points $x,y\in\PP(V)$ and a point $w\in\PP(V)$, let $K_{x,y,w}$ be the vector space of quadrics satisfying
\begin{equation}\label{eq:quadric-four-conditions}
 q(x)=q(y)=B_q(x,w)=B_q(y,w)=0.
\end{equation}
A member $U\in\mathcal G$ defines a common zero locus $X_U$, and $x,y\in X_U$ with $w\in T_xX_U\cap T_yX_U$ precisely when $U\subset K_{x,y,w}$.

Choose lifts $x=e_0$ and $y=e_1$.
The conditions in \eqref{eq:quadric-four-conditions} have rank $4$ when $x,y,w$ are projectively independent, and rank $3$ when $w\in\langle x,y\rangle$, including $w=x,y$.
If a stratum of triples has dimension $t$ and the rank is $r$, the space of choices of $U$ is $\Gr(4,36-r)$, of dimension $4(32-r)$.
Thus the universal incidence has the following dimensions:
\[
\begin{array}{l|c|c|c}
\text{stratum}&t&r&t+4(32-r)\\ \hline
x,y,w\text{ independent}&21&4&133\\
w\in\langle x,y\rangle\setminus\{x,y\}&15&3&131\\
w=x\text{ or }w=y&14&3&130
\end{array}
\]
For the middle row, choose a line and three distinct points on it, giving $\dim\Gr(2,V)+3=15$.
Since $\dim\mathcal G=128$, the incidence of triples for a general $U$ has dimension at most $133-128=5$.
Its image, and hence its Zariski closure $B_{X_U}$, has dimension at most $5$. 
For a general $U$, the threefold $X_U$ is smooth, so Proposition~\ref{prop:double-tangent} gives $\tdeg(X_U)=1$. 
Example~\ref{ex:ci-tangent} gives $\tdeg(X_U)\deg\Tan(X_U)=64$, so $\deg\Tan(X_U)=64$.
\end{proof}

\begin{remark}\label{rem:critical-to-stable-ci}
The adjacent complete-intersection cases make the critical-to-stable transition particularly visible:
\[
 X_{2,2,3}\subset\PP^6:\quad \tdeg=24,
 \qquad
 X_{2,2,2,2}\subset\PP^7\ \text{general}:\quad \tdeg=1.
\]
Thus passing from the critical case $N=6$ to the first stable case $N=7$ changes the tangent-incidence morphism from degree $24$ to birational.
\end{remark}


\subsection{Intersections of two Grassmannians}

Let $V$ be $5$-dimensional, put $W=\wedge^2V$, and let $G=\Gr(2,V)\subset\PP(W)=\PP^9$ be the Pl\"ucker embedding.
For general $g\in\operatorname{PGL}(W)$, the intersection $Y_{25}=G\cap gG$ is a Calabi--Yau threefold of degree $25$; see \cite[Proposition~2.16]{Kpf}. 

\begin{theorem}\label{thm:Y25}
For a general $Y_{25}=G\cap gG\subset\PP^9$, one has
\[
 \tdeg(Y_{25})=1,
 \qquad
 \deg\Tan(Y_{25})=180.
\]
\end{theorem}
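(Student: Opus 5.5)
The approach mirrors the proof of Theorem~\ref{thm:2222}. By Proposition~\ref{prop:double-tangent} it suffices to show that, for general $g\in\operatorname{PGL}(W)$, the incidence of triples $(x,y,w)$ with $x\ne y$ in $Y=G\cap gG$ and $w\in T_xY\cap T_yY$ has dimension at most $5$. Since $T_xY=T_xG\cap T_x(gG)$ and $T_x(gG)=g\,T_{g^{-1}x}G$, the conditions split into a condition on $(x,y,w)$ alone and the same condition on $g^{-1}(x,y,w)$. Set
\[
 Z=\{(x,y,w)\mid x\ne y\in G,\ w\in T_xG\cap T_yG\}\subset(\PP^9)^3 .
\]
Then the universal incidence is
\[
 I=\{(g,x,y,w)\mid (x,y,w)\in Z,\ g^{-1}(x,y,w)\in Z\},
\]
and I will bound $\dim I\le\dim\operatorname{PGL}(W)+5=104$ and then project to $\operatorname{PGL}(W)$, stratifying as described at the start of Section~\ref{sec:stableexamples}.

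First I compute $Z$ stratum by stratum. For $x,y$ corresponding to skew lines in $\PP(V)$ (a $12$-dimensional family of pairs), the two $6$-dimensional tangent spaces to $G$ meet in the expected dimension $3$. This gives a stratum $Z_0$ of dimension $15$, and on it $x,y,w$ are generically projectively independent. The special strata are the following: the lines of $x,y$ meet (pairs of dimension $11$, and $T_xG\cap T_yG$ jumps in dimension); the points $x,y$ lie on a common line of $G$; and $w\in\langle x,y\rangle$, including $w=x$ or $w=y$. For each stratum I record its dimension $z_s$ and the configuration type of the triple.

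Next, for a fixed triple $\tau$ of a given type, the set of $g$ with $g^{-1}\tau\in Z$ is computed by orbit counting. Here $\operatorname{PGL}(W)$ acts transitively on triples of that type (independent triples, or collinear ones, and so on), with stabilizer dimension $99-\dim(\text{type locus})$. The fibre therefore has dimension $99-\dim(\text{type locus})+\dim(Z\cap\text{type locus})$. For the main stratum this gives $\dim I_0=15+(99-27+15)=102\le104$, so the general fibre over $g$ has dimension $3$. The step I expect to be the main obstacle is this verification for the degenerate strata, especially meeting lines and collinear $w$. There the tangent intersection grows while the type locus shrinks, so one must check that $2z_s-\dim(\text{type locus})\le 5$ in each case. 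I would first work in explicit coordinates $e_i\wedge e_j$, using the $\operatorname{PGL}(V)$-normal form of pairs of lines in $\PP^4$, to compute $T_xG\cap T_yG$ exactly.

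Once $\tdeg=1$ holds, the degree comes from \eqref{eq:CYfactor}. Take $N=9$ and $d=25$. Then \eqref{eq:RR} gives $c=120-50=70$, and the known Hodge numbers $(1,51)$ of $Y_{25}$ give $e=-100$, so
\[
 \deg\Tan(Y_{25})=\omega=500-420+100=180 .
\]
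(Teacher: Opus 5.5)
Your proposal is essentially the paper's proof: the same universal incidence of triples in $Z$ (the paper's $\mathcal C$) related by $g$, the same orbit count $2z_s+99-o_s$ over projective-dependence types, then Proposition~\ref{prop:double-tangent}, and the same input $(d,c,e)=(25,70,-100)$ in \eqref{eq:CYfactor}. The degenerate strata you left open close at once: for skew lines $T_xG\cap T_yG=\PP(U\wedge U')$ contains no point of $\langle x,y\rangle$, so collinear triples and $w\in\{x,y\}$ occur only when the two lines in $\PP^4$ meet (a $10$-dimensional family of pairs, not $11$), which gives $(z_s,o_s)=(11,19)$ and $(10,18)$ and fibres of dimension at most $3$ over a general $g$.
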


\begin{proof}
For a point $x=[U]\in G$, where $U\subset V$ is a $2$-plane, the embedded tangent space is $T_xG=\PP(U\wedge V)\simeq\PP^6$.
Let
\[
 \mathcal C=
 \{(x,y,w)\mid x,y\in G,\ x\ne y,\ w\in T_xG\cap T_yG\}.
\]
If $x=[U]$ and $y=[U']$ with $U\cap U'=0$, then $(U\wedge V)\cap(U'\wedge V)=U\wedge U'$ has vector-space dimension $4$, so $T_xG\cap T_yG\simeq\PP^3$.
Such ordered pairs form a $12$-dimensional open subset of $G\times G$.
If $\dim(U\cap U')=1$, the ordered-pair locus has dimension $10$: choose the common line in $V$ and then two distinct lines in the $4$-dimensional quotient.
Taking $U=\langle e_0,e_1\rangle$ and $U'=\langle e_0,e_2\rangle$ gives
\[
 (U\wedge V)\cap(U'\wedge V)
 =(e_0\wedge V)\oplus\CC(e_1\wedge e_2),
\]
so $T_xG\cap T_yG\simeq\PP^4$.
Consequently $\dim\mathcal C=15$.

We also need the projective dependence strata in $\mathcal C$.
A triple $(x,y,w)\in\mathcal C$ with three distinct entries is collinear only if $U\cap U'\ne0$; conversely in that case the line $\langle x,y\rangle$ is contained in both tangent spaces.
Hence the collinear locus in $\mathcal C$ has dimension $10+1=11$.
The loci $w=x$ and $w=y$ have dimension $10$.

Consider the incidence
\[
 \mathcal B=
 \{(c,c',g)\in\mathcal C\times\mathcal C\times\operatorname{PGL}(W)
   \mid g(c')=c\},
\]
where $g$ acts componentwise on the ordered triples.
Each dependence stratum is a single $\operatorname{PGL}(W)$-orbit in the space of ambient triples (with $w=x$ and $w=y$ treated separately).
Since $\dim\operatorname{PGL}(W)=99$, an orbit of dimension $o$ has stabilizer dimension $99-o$.
If its intersection with $\mathcal C$ has dimension $t$, the corresponding part of $\mathcal B$ has dimension at most $2t+99-o$:
\[
\begin{array}{l|c|c|c}
\text{stratum}&t&o&2t+99-o\\ \hline
x,y,w\text{ independent}&15&27&102\\
x,y,w\text{ distinct and collinear}&11&19&102\\
w=x\text{ or }w=y&10&18&101
\end{array}
\]
Thus for general $g$ the fibre of $\mathcal B\to\operatorname{PGL}(W)$ has dimension at most $102-99=3$.

Now suppose $Y_{25}=G\cap gG$ is transverse.
A triple $(x,y,w)$ with $x\ne y$ and $w\in T_xY_{25}\cap T_yY_{25}$ gives an element $c=(x,y,w)\in\mathcal C$ such that $g^{-1}c\in\mathcal C$.
Hence all such triples are parametrized by the above fibre and form a set of dimension at most $3$.
Their image in $\PP^9$ has dimension at most $3$.
Proposition~\ref{prop:double-tangent} therefore gives $\tdeg(Y_{25})=1$.

For $Y_{25}$ with hyperplane divisor $H$, one has $(d,c,e)=(25,70,-100)$. 
Thus \eqref{eq:CYfactor} gives $\deg\Tan(Y_{25})=180$.
\end{proof}

\begin{remark}
Theorem~\ref{thm:Y25} gives a stable-range example of tangent birationality whose geometry is quite different from that of a complete intersection.
The proof uses only the incidence geometry of the two Pl\"ucker Grassmannians.
It suggests that direct control of pairwise intersections of embedded tangent spaces may be useful beyond the syzygetic criteria of Corollary~\ref{cor:quadratic-tau} and Remark~\ref{rem:syzygy-threshold}.
\end{remark}


\subsection{Weighted complete intersections}\label{subsec:wci}

We use the standard notation $\PP(a_0,\ldots,a_s)$ for weighted projective space and $X_{d_1,\ldots,d_r}$ for a weighted complete intersection of the indicated degrees; an exponent in a weight list indicates repetition.
A weighted complete intersection is quasismooth if its affine cone is smooth away from the origin, and we exclude linear cones.
For these models, let $L=\cO_X(1)$; it is ample but need not be very ample.
We write $H=mL$ for the chosen very ample multiple.

The classification of quasismooth terminal weighted complete intersections of dimension $3$ with trivial canonical class, excluding linear cones, consists of $13$ families; see
\cite[Theorem~4.5]{CCC}.
All quasismooth members of these families are in fact nonsingular \cite[Remark~4.6]{CCC}.
Example~\ref{ex:ci-tangent} gives the tangential Chern gaps for the five ordinary-projective cases, and Theorem~\ref{thm:2222} proves tangent birationality in the stable case.
The remaining weighted families test Conjecture~\ref{conj:CYtau} for polarizations whose roots need not be very ample.

\begin{theorem}\label{thm:wci}
For a general member $X$ of each weighted family below, the indicated $H=mL$ is very ample and its complete embedding has tangent degree $1$.
The ambient dimensions and tangent-variety degrees are as follows:
\[
\begin{array}{c|c|c|c}
X & H & N & \deg\Tan(X)\\ \hline
X_6\subset\PP(1^4,2) &2L&10&180\\
X_8\subset\PP(1^4,4) &4L&35&1800\\
X_{10}\subset\PP(1^3,2,5)&8L&107&8896\\
X_{3,4}\subset\PP(1^5,2)&2L&15&540\\
X_{2,6}\subset\PP(1^5,3)&3L&30&1480\\
X_{4,4}\subset\PP(1^4,2^2)&2L&11&304\\
X_{4,6}\subset\PP(1^3,2^2,3)&6L&87&7644\\
X_{6,6}\subset\PP(1^2,2^2,3^2)&8L&99&9304
\end{array}
\]
\end{theorem}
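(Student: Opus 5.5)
For each row of the table I would argue in three steps: first show that $H=mL$ is very ample and determine $N$; then prove $\tdeg(X)=1$; finally read off $\deg\Tan(X)$ from \eqref{eq:CYfactor}.

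\emph{Numerical invariants.} For a weighted complete intersection $X_{d_1,\ldots,d_r}\subset\PP(a_0,\ldots,a_s)$ one has
\[
 L^3=\frac{\prod d_i}{\prod a_j},\qquad
 c(T_X)=\frac{\prod_j(1+a_jL)}{\prod_i(1+d_iL)}.
\]
These give $L^3$, $c_2(X)\cdot L$ and $e$, which are also tabulated in \cite{CCC}. Then $d=m^3L^3$, $c=m\,c_2(X)\cdot L$, and $N+1=d/6+c/12$ by \eqref{eq:RR}. As a check, for $X_6\subset\PP(1^4,2)$ one gets $(L^3,c_2\cdot L,e)=(3,42,-204)$. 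With $m=2$ this gives $d=24$, $c=84$ and $N=10$, and \eqref{eq:CYfactor} gives
\[
 \omega=480-504+204=180,
\]
matching the table. The remaining rows are the same routine arithmetic. Once tangent birationality is known, $\deg\Tan(X)=20d-6c-e$.

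\emph{Very ampleness.} For general quasismooth $X$ the coordinate ring is the weighted polynomial ring modulo a regular sequence. So $H^0(X,\cO_X(m))$ is spanned by monomials of degree $m$, and $\cO_X(m)$ is very ample as soon as the degree-$m$ monomials separate points and tangents on the weighted projective space along $X$. I would choose $m$ to be a multiple of every weight appearing among the coordinates that do not vanish at the singular points of the ambient space which $X$ meets. Generality keeps $X$ away from the worst singular strata; for instance, $X_{10}\subset\PP(1^3,2,5)$ misses the point $\PP(2)$ but contains a $\tfrac15$-point of the ambient space. This explains the choices $m=8$, $6$, $8$ in the last rows. I would then verify separation of tangents chart by chart, on each orbifold chart $\CC^4/\mu_a$, via a standard Reid-type criterion: every monomial generator of the invariant ring on the chart must appear in degree $m$.

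\emph{Tangent degree.} This is the main obstacle. Theorem~\ref{thm:veronese} does not apply directly, because $L$ is not very ample. I would try two routes, in order.

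(a) Syzygetic route. Show that the complete embedding by $|H|$ satisfies property $N_2$, and combine this with the secant nondefectivity of Theorem~\ref{thm:stable} and \cite[Example~29]{HGR}, exactly as in Corollary~\ref{cor:quadratic-tau}. For large $m$ (the rows with $N=35,87,99,107$), $N_2$ should follow from Green--Lazarsfeld-type results for sufficiently positive multiples of an ample line bundle. For the small rows ($m=2,3,4$), I would verify $N_2$ by an explicit Koszul-cohomology computation on one specific smooth member, and then pass to a general member by upper semicontinuity of graded Betti numbers in flat families.

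(b) Incidence route. If $N_2$ fails, I would imitate Theorem~\ref{thm:2222}. Parametrize the members by the coefficients of the defining weighted polynomials. For ordered triples $(x,y,w)$ with $w\in T_xX\cap T_yX$, compute the rank of the resulting linear conditions on the coefficients, stratified by the positions of $x,y$ relative to the weighted strata. Then show that the universal incidence has relative dimension at most $5$, and conclude with Proposition~\ref{prop:double-tangent}.

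I expect the rank computations in (b), near the orbifold points, to be the most delicate part. For that reason I would try route (a) first.
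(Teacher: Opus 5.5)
\textbf{There is a genuine gap.} Your numerical step (Riemann--Roch for $N$, and \eqref{eq:CYfactor} for the degree) is fine. The problem is the tangent-degree step, and your preferred route~(a) cannot close it for most rows.

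\textbf{Route (a) fails for five rows.} For $X_6$, $X_8$, $X_{3,4}$, $X_{2,6}$ and $X_{4,4}$ with the displayed polarizations, property $N_2$ fails for every quasismooth member. No Koszul computation on a special member can therefore succeed.
\begin{itemize}
\item For $X_6$, the $|2L|$-image is a cubic section of the cone in $\PP^{10}$ over $\nu_2(\PP^3)$. The lift of $F_6$ is a minimal cubic generator of the ideal, so the ideal is not even generated by quadrics.
\item In the other four cases the image is $Z\cap\{q=0\}$. Here $q$ is a quadric that is a nonzerodivisor on the coordinate ring $R_Z$, and the ideal of $Z$ contains quadrics. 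Concretely, $Z$ is the cone over $\nu_4(\PP^3)$; the image of $\{F_3=0\}$ in the cone over $\nu_2(\PP^4)$; the cone over $\nu_3(Q)$; or one quadric section of the join of $\nu_2(\PP^3)$ with a line.
\item Multiplication by $q$ on a minimal resolution of $R_Z$ has entries in $\mathfrak m$, so its mapping cone is a minimal resolution of $R_X$. Hence $\beta_{2,4}(R_X)\ge\beta_{1,2}(R_Z)>0$: the Koszul syzygies $q\cdot Q_i-Q_i\cdot q$ are nonlinear.
\end{itemize}

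\textbf{The large rows are not handled either.} Appealing to ``sufficiently positive multiples'' gives no explicit threshold. What actually works for $X_{10}$ and $X_{6,6}$ is that $B=2L$ is ample and base-point-free on the general member, with $h^0(B)\ge5$. Gallego--Purnaprajna then gives $N_2$ for $4B=8L$; you never identify this $B$. The same argument reaches only $8L$ for $X_{4,6}$, not the displayed $6L$. For $X_8$ at $4L$, $N_2$ is false, as shown above.

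\textbf{What route (b) would need.} Six of the eight rows therefore rest entirely on your route~(b), where you give only the template of Theorem~\ref{thm:2222}. The paper's proof consists precisely of the missing ingredients:
\begin{itemize}
\item Lemma~\ref{lem:wci-veronese-generation}: the weighted Veronese subrings are generated in degree~$1$, so the image is an explicit section of a cone or join over a Veronese variety.
\item Unique factorization: tangent spaces along different rulings meet only in the vertex, or in the span of the vertex and $uv$ when $m=2$. The same argument shows such triples are projectively independent.
\item The interpolation ranks, giving stratum counts $t-r\le5$. For $X_{3,4}$ this needs a direct count in weighted coordinates, because the cubic does not descend to the cone.
\item For $X_8$ and $X_{2,6}$, a ruling-by-ruling argument: each ruling meets $X$ in two simple points, whose tangent $\PP^3$'s are distinct inside a common $\PP^4$.
\item For $X_{4,6}$ at $6L$, the toric determinant $t_j(t_j-1)^4$. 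It shows that ambient tangent spaces at distinct torus points meet only when the points differ only in $z$. The equation $F_4=z\ell+f_4$ then confines such pairs to a $2$-dimensional family.
\end{itemize}

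\textbf{The orbifold points are not where the difficulty lies.} The general members avoid the ambient singular loci. In particular, $X_{10}$ misses the $\tfrac15$-point, contrary to your claim, because $z^2$ occurs with nonzero coefficient. So no chart-by-chart orbifold analysis is needed there.
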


\begin{proof}
We use two methods.
For $X_{10}$ and $X_{6,6}$, take $B=2L$.
In both cases $B$ is ample and base-point-free, with $h^0(X,\cO_X(B))\ge5$.
Theorem~2.4 of Gallego--Purnaprajna \cite{GP} therefore implies that the complete embedding defined by $|4B|=|8L|$ satisfies property $N_2$.
The syzygy criterion in \cite[Example~29]{HGR}, together with the secant nondefectivity supplied by Theorem~\ref{thm:stable}, gives $\tdeg=1$.

For the remaining weighted rows, the indicated polarization is smaller than what the uniform $N_2$ argument would give.
The ambient weighted-Veronese maps are closed embeddings, and their restrictions give the complete systems on $X$; see Appendix~\ref{app:wci}.
The incidence calculations there prove tangent birationality by Proposition~\ref{prop:double-tangent}.
Finally, Riemann--Roch gives $N$, and \eqref{eq:CYfactor} gives the last column as $20d-6c-e$, where $d$ and $c$ are computed with respect to the displayed polarization $H=mL$.
\end{proof}

\begin{remark}\label{rem:wci-scope}
Theorem~\ref{thm:wci} concerns general members and the displayed polarizations.
It does not verify Conjecture~\ref{conj:CYtau} for every polarization on every member of the families.
The five ordinary complete-intersection families are treated in Example~\ref{ex:ci-tangent} and Theorem~\ref{thm:2222}; their embeddings defined by $|mL|$ for $m\ge2$ are covered by Corollary~\ref{cor:multiples-tau}.
No minimality of the multiples in the table is asserted.
For the two syzygetic rows we retain $8L$: the class $B=2L$ is ample and base-point-free, but need not be very ample, so Theorem~\ref{thm:veronese} does not apply directly to $|2B|$.
\end{remark}


\subsection{Pfaffian threefolds and unresolved cases}\label{subsec:pfaffian}

A Pfaffian threefold here is a degeneracy locus of codimension $3$ defined locally by the submaximal Pfaffians of a skew-symmetric matrix of odd order; in the weighted setting the entries are
weighted homogeneous.
Pfaffian Calabi--Yau threefolds illustrate both sides of the transition from the critical case to the stable range.
In $\PP^6$, Theorem~\ref{thm:P6exact} gives
\[
\begin{array}{c|c|c|c}
 & c & e &\tdeg\\ \hline
Y_{13}&58&-120&32\\
Y_{14}&56&-98&42
\end{array}
\]
for the degree-$13$ and degree-$14$ Pfaffian examples of \cite{Tonoli,Kpf,Rod}.
As expected from Theorem~\ref{thm:P6exact}, the tangent degree in the critical case depends only on the projective degree, not on the Pfaffian presentation.

We next consider general members of the weighted Pfaffian families of \cite{Kpf},
\[
 Y_5\subset\PP(1^4,2^3),\qquad
 Y_7\subset\PP(1^5,2^2),\qquad
 Y_{10}\subset\PP(1^6,2)
\]
with their natural ample divisors $L$.
Their $2L$-embeddings provide tests in the stable range for which tangent birationality is not established here.
The following proposition records the numerical data and the obstruction for $Y_5$.

\begin{proposition}\label{prop:pfaffian-tests}
For these threefolds, $H=2L$ is very ample and defines a projectively normal embedding with homogeneous ideal generated by quadrics.
The invariants of these embeddings are
\[
\begin{array}{c|c|c|c|c|c}
Y&N&d&c&e&\omega\\ \hline
Y_5&12&40&76&-100&444\\
Y_7&16&56&92&-120&688\\
Y_{10}&21&80&104&-116&1092
\end{array}
\]
For $(Y_5,2L)$, there are at least $10$ minimal first syzygies of total degree $4$; in particular, this embedding does not satisfy property $N_2$.
\end{proposition}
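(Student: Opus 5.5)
The plan is to handle the three families uniformly through the structure of the graded coordinate ring, and to treat the syzygy count for $Y_5$ as a separate Hilbert-function computation.

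\emph{Very ampleness, projective normality, quadratic generation.} Let $S$ be the weighted polynomial ring of the ambient $\PP(1^a,2^b)$ and $R=S/I$ the coordinate ring of $Y$. By Buchsbaum--Eisenbud, $I$ is generated by the five submaximal Pfaffians and $R$ is Gorenstein of codimension $3$, so $R$ is Cohen--Macaulay. Since every weight divides $2$, the second Veronese subring $S^{(2)}$ is generated in degree $1$. Its Proj is the image of the ambient space under the closed embedding given by $|\cO(2)|$. Restricting to $Y$ shows that $2L$ is very ample.

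Cohen--Macaulayness of $R$, together with $H^1(\cO_Y(k))=0$, gives $H^0(Y,\cO_Y(2kL))=R_{2k}$. The surjection $S^{(2)}\to R^{(2)}$ then shows that the embedding is complete and projectively normal.

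For quadratic generation I would argue in two steps.
\begin{itemize}
\item The ideal of $S^{(2)}$ in its polynomial ring is generated by quadrics, since the relations are binomial Veronese relations of degree $2$.
\item The ideal of $Y$ inside $\operatorname{Proj}S^{(2)}$ is generated by the elements $f\cdot m$, where $f$ runs over the Pfaffians and $m$ over the monomials that complete $f$ to even degree $\le 4$.
\end{itemize}
Checking from the degree matrices of \cite{Kpf} that all Pfaffians have degree $\le4$ in the weighted grading then gives generation in $R^{(2)}$-degree $\le2$. This is the point I would verify family by family.

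\emph{Numerical invariants.} Write $d=8L^3$, with $L^3=5,7,10$ read off from the Pfaffian degrees, so $d=40,56,80$. Then $N+1=h^0(2L)=\dim R_2$ comes from the Hilbert series $\bigl(1-\sum t^{\deg f}+\sum t^{\deg g}-t^{s}\bigr)/\prod(1-t^{a_i})$ of the Buchsbaum--Eisenbud resolution. The identity \eqref{eq:RR} gives $c=12(N+1)-2d$. For the Euler number I would use the known Hodge numbers of these Pfaffian threefolds: $h^{1,1}=1$, with $h^{2,1}$ read off from \cite{Kpf}. As a check, $e$ can also be computed as $c_3$ by a Chern-class calculation on the resolution. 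Finally $\omega=20d-6c-e$. All of this is routine.

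\emph{Syzygies of $(Y_5,2L)$.} Let $K_{p,q}$ denote the Koszul cohomology groups of $R^{(2)}$ over the polynomial ring in $13$ variables. Since the ideal is quadratic, the only groups contributing to the degree-$4$ part of the Betti table are $K_{1,1}$ (which lives in degree $2$), $K_{2,1}$, $K_{2,2}$, $K_{3,1}$ and $K_{4,0}=0$. The alternating sum $\sum_p(-1)^p\dim\operatorname{Tor}_p(R^{(2)},\CC)_4$ is determined by the Hilbert function values $\dim R_0,R_2,R_4,R_6,R_8$, and this gives $\dim K_{2,2}-\dim K_{3,1}$ exactly. Since $\dim K_{3,1}\ge0$, it follows that $\dim K_{2,2}$ is at least this number. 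I expect the number to be $\ge10$, and any nonzero $K_{2,2}$ already violates $N_2$.

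The main obstacle is that this alternating sum might come out smaller than $10$, because $K_{3,1}$ could absorb part of the count. In that case I would fall back to an explicit computation of the minimal resolution for a random member, for instance in Macaulay2 over a finite field, followed by semicontinuity of graded Betti numbers, which gives the lower bound for the general member.
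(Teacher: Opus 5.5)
Your plan is essentially the paper's proof. It uses Cohen--Macaulayness from the Buchsbaum--Eisenbud resolution, degree-$1$ generation of the second Veronese ring for weights $1,2$, and odd-degree Pfaffians multiplied by weight-one variables; the Pfaffian degrees you left to check are $(4,4,4,4,4)$, $(3,3,4,4,4)$ and $(3,3,3,3,4)$. It then takes the invariants from Kanazawa's computations together with Riemann--Roch, and reads $\beta_{2,4}-\beta_{3,4}$ off the $t^4$ coefficient of the Hilbert numerator over the $13$-variable polynomial ring.

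The number you left open is exactly $10$. With $h^0(\cO_{Y_5}(2kL))=1,13,66,199,452$ for $k=0,\dots,4$, one gets $452-13\cdot199+78\cdot66-286\cdot13+715=10$, so no fallback is needed. That is just as well, because semicontinuity applied to a single computed member only bounds the general member's $\beta_{2,4}$ from above, not from below.
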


\begin{proof}
The embeddings, quadratic generation, and Chern data are verified in Appendix~\ref{app:pfaffian} using the weighted Pfaffian resolutions.
For the final assertion, write $\beta_{i,j}$ for the graded Betti numbers of the homogeneous coordinate ring of $(Y_5,2L)$ over its ambient polynomial ring.
The Hilbert-series calculation in that appendix gives $\beta_{2,4}-\beta_{3,4}=10$, hence $\beta_{2,4}\ge10$.
Since the ideal is generated in degree $2$, these degree-$4$ first syzygies have quadratic coefficients and prevent property $N_2$.
\end{proof}

Here $\omega(Y,H)=\tdeg(Y,H)\deg\Tan(Y,H)$; the displayed gaps are not asserted to be tangent-variety degrees.
The obstruction for $Y_5$ rules out the sufficient syzygy criterion, not tangent birationality itself.
For $Y_7$ and $Y_{10}$, the degree-$4$ Hilbert coefficients give no such obstruction, but do not establish linearity of the first syzygies.
Computing the first syzygy modules is necessary before applying \cite[Example~29]{HGR} to these embeddings.


\appendix
\section{Weighted complete-intersection calculations}\label{app:wci}


\subsection{Numerical invariants}\label{subsec:wci-numerics}

We collect the numerical data and the case-specific tangent-incidence arguments used in Theorem~\ref{thm:wci}.
The tangent-degree assertions in the table concern general members with the displayed polarizations.
For a quasismooth Calabi--Yau weighted complete intersection $X=X_{d_1,\ldots,d_r}\subset\PP(a_0,\ldots,a_{r+3})$ avoiding the singular locus of the ambient space, let $L$ be its natural ample divisor.
These hypotheses hold for the general members considered below. The weighted Euler and normal sequences give
\begin{equation}\label{eq:wci-chern}
 L^3=\frac{\prod_j d_j}{\prod_i a_i},\qquad
 c(T_X)=\frac{\prod_i(1+a_iL)}{\prod_j(1+d_jL)}.
\end{equation}
For the chosen very ample multiple $H=mL$, our numerical invariants are $d=m^3 \int_X L^3$, $c=m \int_X\,c_2(X)L$ and $e=\int_Xc_3(X)$. 
The $13$ families of \cite[Theorem~4.5]{CCC} give the following data.
\[
\begin{array}{c|c|c|c|c|c|c|c}
X& \int_X L^3& \int_X c_2(X)L&e&H& N &\omega& \tdeg \\ \hline
X_5\subset\PP^4&5&50&-200&L&4&0& \\
X_6\subset\PP(1^4,2)&3&42&-204&2L&10&180&1\\
X_8\subset\PP(1^4,4)&2&44&-296&4L&35&1800&1\\
X_{10}\subset\PP(1^3,2,5)&1&34&-288&8L&107&8896&1\\
X_{2,4}\subset\PP^5&8&56&-176&L&5&0& \\
X_{3,3}\subset\PP^5&9&54&-144&L&5&0& \\
X_{3,4}\subset\PP(1^5,2)&6&48&-156&2L&15&540&1\\
X_{2,6}\subset\PP(1^5,3)&4&52&-256&3L&30&1480&1\\
X_{4,4}\subset\PP(1^4,2^2)&4&40&-144&2L&11&304&1\\
X_{4,6}\subset\PP(1^3,2^2,3)&2&32&-156&6L&87&7644&1\\
X_{6,6}\subset\PP(1^2,2^2,3^2)&1&22&-120&8L&99&9304&1\\
X_{2,2,3}\subset\PP^6&12&60&-144&L&6&24&24\\
X_{2,2,2,2}\subset\PP^7&16&64&-128&L&7&64& 1
\end{array}
\]
The blank entries in the last column correspond to $N\le5$, where the tangent-incidence morphism is not generically finite; these three rows carry the polarization $L$ and are not covered by Theorem~\ref{thm:wci}.
The first three numerical columns follow directly from \eqref{eq:wci-chern}; the tangent-degree entries use the arguments below and Theorem~\ref{thm:2222}. 
In every row with $\tdeg(X)=1$ the gap equals $\deg\Tan(X)$, which is the last column of Theorem~\ref{thm:wci}.


\subsection{The syzygetic cases}\label{subsec:wci-syzygetic} 
For $X_{10}$ and $X_{6,6}$, put $B=2L$. 
For $X_{10}$, the ambient base locus of degree-$2$ sections is the point supported on the weight-$5$ coordinate; a general equation contains the square of that coordinate with a nonzero coefficient and hence avoids this point.
For $X_{6,6}$, the ambient base locus is the line supported on the two weight-$3$ coordinates; the two defining equations restrict to general binary quadrics, which have no common zero.
Thus $B$ is ample and base-point-free, with $h^0(X,\cO_X(B))=7$ and $5$, respectively, so \cite[Theorem~2.4]{GP} implies that the complete embedding defined by $|4B|=|8L|$ satisfies property $N_2$.
The argument in \cite[Example~29]{HGR} then gives $\tdeg=1$, since Theorem~\ref{thm:stable} gives $\dim\Sec(X)=7$.


\subsection{Incidence calculations}\label{subsec:wci-incidence}

For projective subvarieties $A$ and $B$, their join $\operatorname{Join}(A,B)$ is the closure of the union of lines joining a point of $A$ to a point of $B$.
When $B$ is linear and disjoint from the linear span of $A$, the join is the cone over $A$ with vertex $B$; the span of a base point and the vertex is called a ruling.
We give the incidence counts, including the strata on which the equations impose fewer conditions.
The points of tangency on the ambient cones and joins below are taken away from the vertex; common points of their tangent spaces may lie on the vertex.
A general threefold in each case avoids the vertex: a defining equation is nonzero there when the vertex is a point, and two general quadrics have no common zero on the vertex line in the
case $X_{4,4}$.

\begin{lemma}\label{lem:wci-veronese-generation}
For each weight system used in the incidence calculations below, the indicated weighted-Veronese subring is generated in degree $1$.
Consequently the corresponding weighted-Veronese map is a closed embedding, and its restriction to $X$ is the complete linear system displayed in the table.
\end{lemma}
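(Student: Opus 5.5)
The plan is a direct monomial argument in the weighted polynomial ring, followed by the standard passage from generation of the Veronese subring to embeddings. Let $S=\CC[x_0,\ldots,x_s]$ be graded by the weights $a_i$, and let $S^{(m)}=\bigoplus_kS_{mk}$ be the Veronese subring. The weight systems and multiples that enter the incidence calculations are the non-syzygetic rows of Theorem~\ref{thm:wci}:
\[
(1^4,2;\,m=2),\quad (1^4,4;\,m=4),\quad (1^5,2;\,m=2),\quad (1^5,3;\,m=3),\quad (1^4,2^2;\,m=2),\quad (1^3,2^2,3;\,m=6).
\]
In every case each weight divides $m$, and this is the only feature the argument uses.

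\textbf{Step 1: combinatorial core.} We must show that every monomial of degree $mk$ is a product of $k$ monomials of degree $m$. This amounts to showing that any multiset of weights with total $mk$ contains a sub-multiset of total exactly $m$; induction on $k$ then finishes.
\begin{itemize}
\item In the first five cases the weights form a divisibility chain ($1\mid a\mid m$). List the weights in decreasing order and take partial sums. Each partial sum below $m$ is a multiple of the next weight, since all earlier weights are multiples of it. Hence the partial sums cannot jump over $m$. Because the total is $mk\ge m$, some partial sum equals $m$ exactly.
\item The case $(1,2,3)$ with $m=6$ is the one place where the chain argument fails: for example $3+2$ followed by $2$ overshoots. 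We handle it by hand. Extract $3+3$ or $2+2+2$ while possible. What remains has at most one weight $3$, at most two weights $2$, and some number of $1$'s, with total still a positive multiple of $6$. The leftover non-unit part is one of $0,2,3,4,5,7$. If it is $7$ (that is, $3+2+2$), use $3+2+1$; in all other cases top up with $1$'s to reach $6$. In each case the total congruence guarantees enough $1$'s.
\end{itemize}
This case split is the only real obstacle, and it is finite.

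\textbf{Step 2: embedding.} Generation of $S^{(m)}$ in degree $1$ gives a surjection $\operatorname{Sym}S_m\to S^{(m)}$. Since $\operatorname{Proj}S^{(m)}\simeq\operatorname{Proj}S$, the weighted projective space is a closed subscheme of $\PP(S_m^*)$. Restricting to $X$ gives a closed embedding of $X$. Since $X$ avoids the singular locus of the ambient space, $\cO_X(mL)$ is invertible and equals the pullback of $\cO(1)$.

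\textbf{Step 3: completeness.} Let $R=S/I$, where $I$ is generated by the regular sequence of defining equations. It suffices that
\[
S_{m}\to R_{m}=H^0(X,\cO_X(mL))
\]
is surjective. Surjectivity of $S_m\to R_m$ holds by construction. The identification $R_m=H^0(X,\cO_X(mL))$ follows from the Koszul resolution of $\cO_X$ together with vanishing of the intermediate cohomology $H^i(\cO(j))$, $0<i<\dim$, on weighted projective space. Hence the embedding from Step 2 is given by the complete linear system $|mL|=|H|$ displayed in the table.
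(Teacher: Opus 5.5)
Your proposal is correct and follows the paper's proof. It uses the same monomial-splitting argument: your partial-sum argument for divisibility chains justifies the paper's one-line claim for weights $1$ and $a$, and your treatment of $(1,2,3)$ with the exceptional residue $3+2+2$ is identical to the paper's. The identification of the graded pieces with $H^0(X,\cO_X(mL))$ is the same depth fact in both, obtained in your case from the Koszul resolution and intermediate cohomology vanishing, and in the paper by citing Cohen--Macaulayness of the complete-intersection ring.

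One small correction: your remark that each weight dividing $m$ is ``the only feature the argument uses'' is inaccurate. The chain argument needs the weights to form a divisibility chain, which is precisely why $(1,2,3)$ requires separate handling.
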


\begin{proof}
For weights $1$ and $a$, the $a$th Veronese ring is generated by the monomials of weighted degree $a$: every variable has weight $1$ or $a$, so a monomial of weighted degree $ak$ splits into $k$ factors of weighted degree $a$.
For weights $1,2,3$, every multiset of weights of total $6k$ can be partitioned into subsets of total weight $6$: remove groups $3+3$ and $2+2+2$, and complete the residual weights with weight-$1$ entries; the only residual configuration $3+2+2$ is resolved by first removing $3+2+1$.
Thus the $6$th Veronese subring is generated in degree $1$ as well.
Finally, the weighted complete-intersection rings are Cohen--Macaulay of dimension $4$, so the relevant graded pieces agree with $H^0(X,\cO_X(mL))$.
\end{proof}

\begin{lemma}[Interpolation]\label{lem:wci-interpolation}
Let $x,y\in\PP(E)$ be distinct.
For a quadric $Q$, the conditions
\[
 Q(x)=Q(y)=B_Q(x,w)=B_Q(y,w)=0
\]
have rank $4$ if $x,y,w$ are projectively independent and rank $3$ if $w\in\langle x,y\rangle$, including $w=x,y$.
For a cubic $F$, the analogous conditions
\[
 F(x)=F(y)=dF_x(w)=dF_y(w)=0
\]
have rank $4$ for $w\ne x,y$, including the collinear case, and rank $3$ for $w=x$ or $w=y$.
\end{lemma}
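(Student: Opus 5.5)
The plan is to compute the rank of each condition family by choosing coordinates adapted to the configuration of $x,y,w$, and to write every condition as a linear functional on the space of quadrics (or cubics). Since the maps involved are $\operatorname{PGL}(E)$-equivariant, we may take $x=e_0$ and $y=e_1$. In the independent case we take $w=e_2$. In the collinear case we take $w=e_0+\lambda e_1$ with $\lambda\ne0$, and separately $w=e_0$ (the case $w=e_1$ is symmetric). The rank of a family of conditions is then the dimension of the span of the corresponding functionals, and we read it off from the coefficients of a few monomials.

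\textbf{Quadrics.} Write $Q=\sum q_{ij}x_ix_j$. The conditions become
\[
Q(x)=q_{00},\qquad Q(y)=q_{11},\qquad B_Q(x,w),\qquad B_Q(y,w).
\]
For $w=e_2$ we get $B_Q(x,w)=\tfrac12 q_{02}$ and $B_Q(y,w)=\tfrac12 q_{12}$. These four functionals involve distinct coefficients, so the rank is $4$.

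For $w=e_0+\lambda e_1$ we get
\[
B_Q(x,w)=q_{00}+\tfrac{\lambda}{2}q_{01},\qquad B_Q(y,w)=\tfrac12 q_{01}+\lambda q_{11}.
\]
Together with $q_{00}$ and $q_{11}$, these span the three-dimensional space $\langle q_{00},q_{01},q_{11}\rangle$, so the rank is $3$. For $w=e_0$ the span is again $\langle q_{00},q_{11},q_{01}\rangle$, with $B_Q(x,x)=Q(x)$ redundant, so the rank is $3$. Conceptually, the four conditions only see $Q$ restricted to the line $\langle x,y\rangle$, and the binary quadrics on that line form a three-dimensional space.

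\textbf{Cubics.} The computation is similar, with $F=\sum f_{ijk}x_ix_jx_k$. The functional $dF_x(w)$ is the $w$-component of $\nabla F(x)$. In the independent case the conditions are the coefficients of $x_0^3$, $x_1^3$, $x_0^2x_2$ and $x_1^2x_2$, so the rank is $4$.

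In the collinear case $w=e_0+\lambda e_1$, everything again reduces to the binary cubic $F|_{\langle x,y\rangle}=\sum_k a_k s^{3-k}t^k$. The conditions are $a_0$, $a_3$, $3a_0+\lambda a_1$ and $a_2+3\lambda a_3$. Since $\lambda\ne0$, these span all of $\langle a_0,a_1,a_2,a_3\rangle$, giving rank $4$. Binary cubics form a four-dimensional space, which is exactly why the rank rises from $3$ to $4$ in passing from quadrics to cubics.

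For $w=e_0$ we have $dF_x(x)=3F(x)$ by Euler's formula, which is redundant. The remaining conditions $a_0$, $a_3$ and $a_2$ give rank $3$.

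The main obstacle is to make sure the projective-point conventions do not affect the rank. For this we note that rescaling the lifts of $x$, $y$ and $w$ multiplies each functional by a nonzero scalar, so their span is unchanged. The rest is bookkeeping: we check that the chosen coordinates cover every stratum, using transitivity of $\operatorname{PGL}(E)$ on independent triples and on triples of the form (two distinct points, a third point on their line) up to the parameter $\lambda$.
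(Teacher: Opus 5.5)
Your proof is correct and follows essentially the same route as the paper. Both normalize $x=e_0$, $y=e_1$, read off the ranks from explicit coefficients, reduce the collinear cubic case to Hermite interpolation of a binary cubic on $\langle x,y\rangle$, and use Euler's identity for the redundancy at $w=x$ or $w=y$; the paper adds only a remark that these conditions descend to restricted linear systems on an ambient variety, which the weighted applications need but the lemma as stated does not.
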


\begin{proof}
For quadrics, choose lifts $x=e_0$ and $y=e_1$.
For cubics, the collinear case is interpolation of values and first derivatives of a binary cubic at two distinct points; at an endpoint, Euler's identity makes one derivative condition redundant.
For a hypersurface section of an ambient variety $Z$, derivatives are evaluated only on vectors in the affine tangent space to $Z$, so the same conditions descend to the restricted linear system.
\end{proof}

\begin{remark}[Incidence principle]\label{rem:wci-incidence-principle}
If a stratum of triples $(x,y,w)$ has dimension $t$ and the chosen equations impose $r$ independent linear conditions on their coefficient spaces, then its fibre over general equations has dimension at most $t-r$, or is empty.
As in Section~\ref{sec:stableexamples}, strata that do not dominate the parameter space are absent for general equations.
Applying this to finitely many strata gives a common dense open set on which all bounds hold; we then use Proposition~\ref{prop:double-tangent}.
\end{remark}

In each case below, the ambient weighted-Veronese map is a closed embedding whose restriction to $X$ is the displayed complete linear system, by Lemma~\ref{lem:wci-veronese-generation}, and the ranks $r$ are those of Lemma~\ref{lem:wci-interpolation}.


\subsubsection{The cone and join cases}\label{subsubsec:wci-cone-join}

\paragraph{Sextic $X_6$ in $\PP(1^4,2)$.}
Under $|2L|$, the ambient fourfold is the cone $Z$ over $\nu_2(\PP^3)$ with vertex a point, and $X_6$ is a general cubic section.
Write a point off the vertex as $(u^2,a)$, where $[u]\in\PP^3$.
Its tangent space to $Z$ is a $\PP^4$ depending only on $[u]$.
For different base points the two tangent spaces meet in a $\PP^1$, spanned by the vertex and $uv$.
The incidence count is as follows; ``same'' refers to the ruling line.
\[
\begin{array}{l|c|c|c}
\text{stratum}&t&r&t-r\\ \hline
\text{different rulings}&4+4+1=9&4&5\\
\text{same ruling, independent triple}&3+2+4=9&4&5\\
\text{same ruling, collinear, }w\ne x,y&3+2+1=6&4&2\\
w=x\text{ or }w=y&3+2=5&3&2
\end{array}
\]
Here the ranks $r$ are the cubic ranks of Lemma~\ref{lem:wci-interpolation}.
For different rulings the triple is independent: neither $u^2$ nor $v^2$ lies in the span of $uv$ and the vertex.
This also shows that the last two strata necessarily have the same ruling.
The bounds therefore cover all triples.
By Remark~\ref{rem:wci-incidence-principle}, their images have dimension at most $5$, so $\tdeg(X_6,2L)=1$.


\paragraph{Octic $X_8$ in $\PP(1^4,4)$ and complete intersection $X_{2,6}$ in $\PP(1^5,3)$.}
Under $|4L|$, the ambient space for $X_8\subset \PP(1^4,4)$ is the cone over $\nu_4(\PP^3)$, and $X_8$ is a quadric section.
Tangent spaces to the base Veronese variety at distinct points are disjoint, by the divisibility argument of Lemma~\ref{lem:veronese-incidence}.
Thus different rulings contribute at most the vertex to the double-tangent image.
Over an open subset of the base the quadric meets each ruling line in two distinct points.
Their tangent $\PP^3$'s in the common tangent $\PP^4$ to the cone are distinct: their intersections with the ruling line are the two different points, since these are simple roots.
They therefore meet in a $\PP^2$.
Such common points form a family of dimension at most $3+2=5$.
The excluded subset of $X$ has dimension at most $2$, so the union of its tangent spaces has dimension at most $5$ as well.
Hence $\tdeg(X_8,4L)=1$.

For $X_{2,6}\subset \PP(1^5,3)$, the quadratic equation defines a smooth quadric threefold $Q\subset\PP^4$, and the $|3L|$-image is a quadric section of the cone over $\nu_3(Q)$, the vertex being the point supported on the weight-$3$ coordinate.
At distinct points $[u],[v]\in Q$, the affine cones over the corresponding projective tangent spaces to $\nu_3(Q)$ are $u^2\widehat{T_{[u]}Q}$ and $v^2\widehat{T_{[v]}Q}$.
Their intersection is trivial, since $u^2v^2$ cannot divide a nonzero cubic; hence tangent spaces along different rulings contribute at most the vertex.
The argument for the remaining triples is the one just given for $X_8$, with the base $\PP^3$ replaced by $Q$, which again has dimension $3$: the tangent space to the cone at a point off the vertex is a $\PP^4$ constant along each ruling line, over an open subset of $Q$ the quadric section meets that line in two distinct points, and the tangent $\PP^3$'s of $X_{2,6}$ at those points are distinct and meet in a $\PP^2$.
Such common points form a family of dimension at most $3+2=5$, and the excluded subset over the branch locus has dimension at most $2$, so the union of its tangent spaces again has dimension at most $5$.
Hence $\tdeg(X_{2,6},3L)=1$.


\paragraph{Complete intersection $X_{4,4}$ in $\PP(1^4,2^2)$.}
The $|2L|$-image of the ambient space is $Z=\operatorname{Join}(\nu_2(\PP^3),\PP^1)$, and $X_{4,4}$ is a general intersection of two quadrics on $Z$.
Writing an off-vertex point as $(u^2,a_0,a_1)$ with $u\in\CC^4$, the ruling planes are parametrized by $\PP^3$, the tangent $\PP^5$ is constant on each of them, and tangent spaces along different rulings meet in the $\PP^2$ spanned by $uv$ and the vertex line.
As for $X_6$, dependent triples lie on a common ruling, since divisibility of $u^2$ by $v$ forces $[u]=[v]$.
The full count is
\[
\begin{array}{l|c|c|c}
\text{stratum}&t&r\text{ (2 quadrics)}&t-r\\ \hline
\text{different rulings}&5+5+2=12&8&4\\
\text{same ruling, independent triple}&3+4+5=12&8&4\\
\text{same ruling, collinear, }w\ne x,y&3+4+1=8&6&2\\
w=x\text{ or }w=y&3+4=7&6&1
\end{array}
\]
The ranks of Lemma~\ref{lem:wci-interpolation} apply to each quadric independently and depend only on the projective dependence of $x,y,w$, so no further stratum occurs.
By Remark~\ref{rem:wci-incidence-principle}, the double-tangent locus has dimension at most $4$ for a general intersection, proving $\tdeg(X_{4,4},2L)=1$.


\paragraph{Complete intersection $X_{3,4}$ in $\PP(1^5,2)$.}
The $|2L|$-image of the ambient space is the cone over $\nu_2(\PP^4)$.
The cubic equation does not descend to an ordinary hypersurface on this cone, so we count its value and tangent conditions directly in weighted coordinates $(u,z)$, $u\in\CC^5$, writing $F_3=f_3(u)+z\ell(u)$ and $F_4=f_4(u)+zq_2(u)+az^2$, where the quartic is the restriction of a general quadric on the cone.
For different rulings, with points $(u^2,b)$ and $(v^2,c)$, a common affine tangent vector has the form $(\lambda uv,\mu)$, and the two value and two tangent conditions on $F_3$ have rank $4$ in either case: for $\lambda\ne0$ the coefficients of $u_0^3,u_1^3,u_0^2u_1,u_0u_1^2$ are independent after setting $u=e_0,v=e_1$, while for $\lambda=0$ the tangent conditions read $\ell(u)=\ell(v)=0$.
The triple is projectively independent here, so $F_4$ contributes $4$ further conditions.
For the same ruling, with $(u^2,b)$ and $(u^2,c)$, $b\ne c$, and a common tangent vector $(uv,\mu)$, the value conditions are $f_3(u)=\ell(u)=0$; if $v$ is not proportional to $u$ the tangent conditions add the independent conditions $\ell(v)=0$ and $df_3|_u(v)=0$, whereas if it is, so that $w$ lies on the ruling line, Euler's identity makes them redundant and the cubic has rank $2$.
Together with the quadric ranks of Lemma~\ref{lem:wci-interpolation} for $F_4$, this gives
\[
\begin{array}{l|c|c|c|c}
\text{stratum}&t&r_3&r_4&t-r_3-r_4\\ \hline
\text{different rulings}&5+5+1=11&4&4&3\\
\text{same ruling, independent triple}&4+2+5=11&4&4&3\\
\text{same ruling, collinear, }w\ne x,y&4+2+1=7&2&3&2\\
w=x\text{ or }w=y&4+2=6&2&3&1
\end{array}
\]
The conditions are evaluated on local lifts, whose rescaling does not change their rank once the value conditions are included, so Remark~\ref{rem:wci-incidence-principle} applies to the product of the weighted cubic and quartic coefficient spaces.
All double-tangent images have dimension at most $3$, proving $\tdeg(X_{3,4},2L)=1$.


\subsubsection{The \texorpdfstring{$6L$}{6L}-embedding of \texorpdfstring{$X_{4,6}$}{X4,6}}\label{subsubsec:wci-X46}
For $X_{4,6}\subset \PP(1^3,2^2,3)$, we prove tangent birationality for the $6L$-embedding by separating ambient tangent spaces.
We first show that two distinct torus points can have intersecting ambient tangent spaces only if they differ in the weight-$3$ coordinate.
On the dense torus, normalize the first coordinate of weight $1$ and write $(u_1,u_2,v_0,v_1,s)
 =\left(\frac{x_1}{x_0},\frac{x_2}{x_0},
        \frac{y_0}{x_0^2},\frac{y_1}{x_0^2},\frac{z}{x_0^3}\right)$.
The weighted Veronese embedding of degree $6$ is given by the monomials $u_1^{a_1}u_2^{a_2}v_0^{b_0}v_1^{b_1}s^c$ with nonnegative integer exponents satisfying $a_1+a_2+2b_0+2b_1+3c\le6$.
Using the torus action, compare the tangent spaces at $p=(1,1,1,1,1)$ and $q=(t_1,\ldots,t_5)$.
A common affine tangent vector is equivalent to an identity
\begin{equation}\label{eq:toric-tangent}
 A(\alpha)=t^\alpha B(\alpha)
\end{equation}
for every exponent vector $\alpha=(a_1,a_2,b_0,b_1,c)$ above, where $t^\alpha=\prod_{i=1}^5t_i^{\alpha_i}$ and $A,B$ are affine-linear functions of $\alpha$.
Let $e_j$ denote the $j$th standard basis vector.
Restricting \eqref{eq:toric-tangent} to $0,e_j,2e_j,3e_j$ for $j=1,\ldots,4$ yields the determinant
\[
 \det\bigl(1,k,-t_j^k,-kt_j^k\bigr)_{k=0}^3
 =t_j(t_j-1)^4.
\]
If some $t_j\ne1$, the constant terms and the coefficients in direction $j$ of both $A$ and $B$ vanish.
For every $i\ne j$, the equations at $e_i$ and $e_i+e_j$ then force their coefficients in direction $i$ to vanish as well; all these exponent vectors have weighted degree at most $6$.
Hence a nonzero intersection requires $t_1=t_2=t_3=t_4=1$.
If $q\ne p$, put $t=t_5\ne1$.
The equations at $e_i$ and $e_i+e_5$ kill the coefficients in directions $1,\ldots,4$.
Those at $0,e_5,2e_5$ give three independent linear equations on the four remaining coefficients, so the common vector space has dimension $1$ and the projective intersection is a single point.
Thus two distinct torus points can have intersecting ambient tangent spaces only when they have the same image after forgetting the coordinate $z$ of weight $3$.

Write the general equations as $F_4=z\,\ell(x)+f_4(x,y)$ and $F_6=a z^2+z\,g_3(x,y)+f_6(x,y)\ (a\ne0)$.
Away from $\ell=0$, the first equation determines $z$ uniquely.
Distinct points with the same image after forgetting $z$ can therefore occur only over $\ell=f_4=0$ in $\PP(1^3,2^2)$, a locus of dimension $2$; the second equation is a quadratic with
nonzero leading coefficient in $z$, so the fibres are finite.
Hence the locus of pairs of distinct torus points whose ambient tangent spaces meet has dimension at most $2$.
The intersection of $X_{4,6}$ with the ambient torus is dense; its complement has dimension at most $2$.
The union of tangent spaces over this complement has dimension at most $2+3=5$. 
Hence every common tangent point involving a point outside the torus lies in a subset of dimension at most $5$. 
Since $\dim\Tan(X_{4,6})=6$, a general point of the tangent variety lies on a unique tangent space: $\tdeg(X_{4,6},6L)=1$.


\section{Weighted Pfaffian calculations}\label{app:pfaffian}


\subsection{The \texorpdfstring{$2L$}{2L}-embeddings}\label{subsec:pfaffian-2L}

The general weighted Pfaffian threefolds in \cite[Theorem~2.7]{Kpf} are smooth and avoid the singular loci of their ambient weighted spaces.
Thus $\cO_Y(1)$ is a line bundle; write $L$ for the corresponding ample divisor.
Their numerical invariants, computed in \cite[Theorem~1.2]{Kpf}, are listed below; the first two columns refer to $L$, so that $d=8\int_YL^3$ and $c=2\int_Yc_2(Y)L$ for $H=2L$, giving the triples $(d,c,e)$ and, by \eqref{eq:CYfactor}, the tangential Chern gaps $444,688,1092$ of Proposition~\ref{prop:pfaffian-tests}:
\[
\begin{array}{c|c|c|c}
 & \int_Y L^3& \int_Y c_2(Y)L&e\\ \hline
Y_5&5&38&-100\\
Y_7&7&46&-120\\
Y_{10}&10&52&-116
\end{array}
\]
We verify that $|2L|$ gives a complete projectively normal embedding.
The codimension-$3$ Pfaffian resolutions make the weighted coordinate rings Cohen--Macaulay, so they agree with the section rings of $L$.
The ambient weights are $1$ and $2$, so the second Veronese subring of the ambient weighted polynomial ring is generated in degree $1$ by Lemma~\ref{lem:wci-veronese-generation}.
Its restriction is therefore the full section ring of $2L$. 
The second weighted-Veronese map is a closed embedding of the ambient weighted projective space, and its restriction is the complete system $|2L|$; the section-ring identification above then gives projective normality.
By \eqref{eq:RR} applied to $2L$, one has $h^0(Y,\cO_Y(2L))=d/6+c/12=13,17,22$, so the embeddings are into $\PP^{12},\PP^{16},\PP^{21}$.


\subsection{Quadratic generation}\label{subsec:pfaffian-quadrics}

To check quadratic generation, write the ambient variables as $x_i$ of weight $1$ and $y_j$ of weight $2$.
The second Veronese ring is the tensor product of $\CC[x_i]^{(2)}$ with $\CC[y_j]$, and its defining ideal is generated by the usual quadratic Veronese relations.
The Pfaffian generators have weighted degrees $(4,4,4,4,4)$, $(3,3,4,4,4)$, and $(3,3,3,3,4)$, respectively.
A degree-$4$ Pfaffian already lies in the even subring.
If $F$ is a degree-$3$ Pfaffian, every even-degree multiple of $F$ is generated over the even ambient ring by the elements $x_iF$, because every odd-degree monomial contains a weight-$1$ variable.
Thus the even part of each Pfaffian ideal is generated by its degree-$4$ Pfaffians and the products $x_iF$.
After regrading by $2$, these are all quadrics, proving quadratic generation of the projective ideals.

These generators can be counted against the Betti numbers of Subsection~\ref{subsec:pfaffian-syzygies}.
If $n$ denotes the number of weight-$1$ variables, the quadratic Veronese relations among them number $\dim\operatorname{Sym}^2(\operatorname{Sym}^2\CC^n)-\dim\operatorname{Sym}^4\CC^n$, that is $20$, $50$ and $105$ for $Y_5$, $Y_7$ and $Y_{10}$.
Adding the Pfaffian contributions, namely $5$ quadrics for $Y_5$, $2\cdot5+3$ for $Y_7$ and $4\cdot6+1$ for $Y_{10}$, gives $25$, $63$ and $130$ quadrics in the respective ideals.
The first two agree with the values $\beta_{1,2}=25$ and $63$ computed in Subsection~\ref{subsec:pfaffian-syzygies}, so for $Y_5$ and $Y_7$ the generators listed above are already minimal.
For $Y_{10}$ one has $\beta_{1,2}=129$, so the listed quadrics satisfy exactly one linear relation modulo the Veronese relations.

\subsection{Hilbert series and first syzygies}\label{subsec:pfaffian-syzygies}

For a graded $\CC$-algebra $R=\bigoplus_{j\ge0}R_j$ with finite-dimensional graded pieces, write $\operatorname{Hilb}_R(t)=\sum_{j\ge0}\dim_{\CC}R_j\,t^j$. 
Let $X\subset\PP^N$ be a projectively normal Calabi--Yau threefold of degree $d$, with hyperplane divisor $H$.
Then its homogeneous coordinate ring has Hilbert series
\[
 \operatorname{Hilb}_R(t)
 =\frac{1+(N-3)t+(d-2N+4)t^2+(N-3)t^3+t^4}{(1-t)^4}.
\]
This can be seen from $h^0(X,\cO_X(kH))=\frac{dk^3}{6}+\frac{ck}{12}$ with $c=12(N+1)-2d$. 

Thus the numerators over $(1-t)^4$ for $Y_5,Y_7,Y_{10}$ are, respectively,
\[
 1+9t+20t^2+9t^3+t^4,\quad
 1+13t+28t^2+13t^3+t^4,\quad
 1+18t+42t^2+18t^3+t^4.
\]
\paragraph{The case of $Y_5$.}
Take $R$ to be the homogeneous coordinate ring of $(Y_5,2L)$.
Put $S=\CC[x_0,\ldots,x_{12}]$.
If $\beta_{i,j}=\dim_{\CC}\operatorname{Tor}_i^S(R,\CC)_j$ denotes the graded Betti numbers, then the Hilbert numerator over $S$ is
\[
 \sum_{i,j}(-1)^i\beta_{i,j}t^j
 =(1+9t+20t^2+9t^3+t^4)(1-t)^9
 =1-25t^2+69t^3+10t^4-357t^5+\cdots.
\]
In this convention the quadratic generators are recorded by $\beta_{1,2}$, and their first syzygies by $\beta_{2,*}$.
Quadratic generation gives $\beta_{1,j}=0$ for $j\ne2$, while minimality gives $\beta_{i,j}=0$ for $j<i+1$ and $i\ge1$.
Comparing the coefficient of $t^4$ therefore gives the exact relation $\beta_{2,4}-\beta_{3,4}=10$. 
In particular, $\beta_{2,4}\ge10$, proving the obstruction in Proposition~\ref{prop:pfaffian-tests}.
The same computation for $(Y_7,2L)$ and $(Y_{10},2L)$ gives $\beta_{1,2}=63$ and $129$ quadratic generators and $\beta_{2,3}=377$ and $1200$ linear first syzygies, but the coefficients of $t^4$ are now $-987$ and $-5525$, so the alternating differences $\beta_{2,4}-\beta_{3,4}$ are negative and do not bound $\beta_{2,4}$.
More generally, the Hilbert series alone does not determine whether nonlinear first syzygies occur, so a direct computation of the first syzygy modules is needed to decide whether \cite[Example~29]{HGR} applies to these two embeddings.
Together with the preceding subsections, this proves Proposition~\ref{prop:pfaffian-tests} and isolates the remaining syzygetic question for $Y_7$ and $Y_{10}$.



\par\noindent{\scshape \small
Department of Mathematics \\
Faculty of Science and Engineering\\
Waseda University \\
3-4-1 Okubo, Shinjuku, Tokyo, 169-8555, Japan}
\par\noindent{\ttfamily  a\_kanazawa@waseda.jp}

\end{document}